\newcommand{\scal}[2]{\langle #1,#2\rangle}
\newcommand{\rr}[1]{\mathbf R^{#1}}
\newcommand{\nm}[2]{\Vert #1\Vert _{#2}}
\newcommand{\NM}[2]{\left \Vert #1\right \Vert _{#2}}
\newcommand{\nmm}[1]{\Vert #1\Vert }
\newcommand{\op}{\operatorname{Op}}
\newcommand{\sets}[2]{\{ \, #1\, ;\, #2\, \} }
\newcommand{\Sets}[2]{\left \{ \, #1\, ;\, #2\, \right \} }
\newcommand{\ep}{\varepsilon}
\newcommand{\fy}{\varphi}
\newcommand{\cdo}{\, \cdot \, }
\newcommand{\vrum}{\vspace{0.1cm}}
\newcommand{\rd}{\mathbf{R} ^{d}}
\newcommand{\rdd}{\mathbf{R} ^{2d}}
\newcommand{\GL}{\mathbf{M}}
\newcommand{\maclS}{\mathcal S}
\newcommand{\mascB}{\mathscr B}
\newcommand{\mascF}{\mathscr F}
\newcommand{\mascP}{\mathscr P}
\newcommand{\mascS}{\mathscr S}
\numberwithin{equation}{section}          %Detta g?r att man f?r
\newtheorem{thm}{Theorem}
\numberwithin{thm}{section}
\newtheorem*{tom}{\rubrik}
\newcommand{\rubrik}{}
\newtheorem{prop}[thm]{Proposition}
\newtheorem{cor}[thm]{Corollary}
\newtheorem{lemma}[thm]{Lemma}
\theoremstyle{definition}
\newtheorem{defn}[thm]{Definition}
\newtheorem{example}[thm]{Example}
\theoremstyle{remark}
\newtheorem{rem}[thm]{Remark}
\newcommand\dela[1]{}
\title{Pseudo-differential calculi and entropy estimates
with Orlicz modulation spaces}
\author{Anupam Gumber}
\address{NuHAG, Faculty of Mathematics,
University of Vienna, Vienna, Austria}
\email{anupam.gumber@univie.ac.at}
\author{Nimit Rana}
\address{Department of Mathematics, Imperial College London, London, UK}
\email{n.rana22@imperial.ac.uk}
\author{Joachim Toft}
\address{Department of Mathematics,
Linn{\ae}us University, V{\"a}xj{\"o}, Sweden}
\email{joachim.toft@lnu.se}
\author{R{\"u}ya {\"U}ster}
\address{Department of Mathematics, 
Faculty of Science, {\.I}stanbul University, 
{\.I}stanbul, T{\"u}rkiye}
\email{ruya.uster@istanbul.edu.tr}
\begin{document}

\begin{abstract}
We deduce continuity properties for
pseudo-differential operators with symbols in 
Orlicz modulation spaces 
when acting on other Orlicz modulation spaces. 
In particular we extend well-known results
in the literature. For example we generalize
the classical result that
if
$$
\frac 1{p'}+\frac 1{q'}
\le
\frac 1{p_1}+\frac 1{p_2},
\quad
\frac 1{p'}+\frac 1{q'}
\le
\frac 1{q_1}+\frac 1{q_2},
\quad
p_j,q_j\le q',
\quad
q\le p
$$
and $a\in M^{p,q}$, then the
pseudo-differential operator $\op (a)$
is continuous from $M^{p_1,q_1}$
to $M^{p_2',q_2'}$.

\par

We also show that the entropy functional
$E_\phi$ possess suitable continuity properties
on a suitable Orlicz modulation space $M^\Phi$ satisfying
$M^p\subseteq M^\Phi \subseteq M^2$, though
$E_\phi$ is discontinuous on $M^2=L^2$.
\end{abstract}

\keywords{Orlicz modulation spaces,
pseudo-differential operator, essential inverse, entropy
functional}

\subjclass[2010]{primary: 35S05, 46E30, 46A16, 42B35
secondary: 46F10}

\maketitle

%%%%%%%%%%%%%%%%%%%%%%%%%%%%%
\section{Introduction}\label{sec0}
%%%%%%%%%%%%%%%%%%%%%%%%%%%%%

\par

Pseudo-differential operators are important in several 
fields of science and technology. In the theory of 
partial differential equations, they are convenient 
tools for handling various kinds of problems, e.{\,}g. 
parametrix constructions, micro-local properties and 
invertibility of (hypo-)elliptic operators. In
time-frequency analysis, pseudo-differential operators 
appears when modelling non-stationary filters.

\par

A pseudo-differential operator is a rule
in which for every function or distribution
$a$ (the symbol), defined
on the phase space (or time-frequency shift
space) $\rr {2d}$ assigns a linear operator
$\op (a)$ acting on functions or distributions
defined on $\rr d$. The assumptions on
the symbols, domains and ranks for
the pseudo-differential operators, usually
resemble on structures where they are
applied. Therefore, in the theory of
partial differential operators,
one usually assumes that the symbols
are smooth and that
differentiations of the symbols
lead to more restrictive
growth/decay properties at infinity.

\par

When using pseudo-differential operators
for modelling time-dependent filters
in time-frequency analysis, any similar
assumptions on smoothness are usually
not relevant. Here it is more
relevant to assume that the involved
symbols, inputs and outputs (i.{\,}e.
the filter constants, ingoing signals 
and outgoing signals) should fulfill
conditions on translation
and modulation invariance, as
well as
certain energy estimates of the
time-frequency content. This leads
to that the involved functions
and distributions should belong to
suitable \emph{modulation spaces}
a family of functions and distribution
spaces, introduced by Feichtinger in
\cite{F1}. The theory of such spaces
was thereafter extended in several ways
(see e.{\,}g.
\cite{CorRod1,Fei5,FeiGro1,FeiGro2,GaSa,Teo1,
Toft5,Toft16,Toft28} and the references
therein).

\par

Recently, some investigations of
Orlicz modulation spaces have been performed in
\cite{TofUst,ToUsNaOz,SchF}. Such spaces are obtained by
imposing an Orlicz norm estimates on the
short time Fourier transforms of the involved
functions and distributions. By the definition it follows
that the family of Orlicz modulation spaces contain
all classical modulation spaces $M^{p,q}_{(\omega )}(\rr d)$,
introduced by Feichtinger in \cite{F1}, which essentially
follows from the fact that the family of Orlicz spaces contains
all Lebesgue spaces. (See
\cite{Hor1} and Section \ref{sec1} for notations.)
On the other hand, the Orlicz modulation spaces becomes
a subfamily of broader classes of modulation spaces, given in
e.{\,}g. \cite{Fei5,Rau1,Rau2}.

\par

A question which might appear is whether there are relevant
situations where it is fruitful to search among
Orlicz modulation spaces to deduce sharper estimates compared
to classical modulation spaces.
For example, consider the entropy functional
on short-time Fourier transforms
\begin{equation}\label{Eq:EntropyFunc}
%\begin{aligned}
E(f) = E_\phi (f)
%\\[1ex]
%&
\equiv
-\iint _{\rr {2d}}|V_\phi f(x,\xi )|^2
\log |V_\phi f(x,\xi )|^2\, dxd\xi
+
\nm {V_\phi f}{L^2}^2\log \nm {V_\phi f}{L^2}^2.
%\end{aligned}
\end{equation}
Here
$\phi \in \mascS (\rr d)\setminus 0$ is fixed,
and as usual we set
$$
0\log 0 \equiv  \lim _{t\to 0+}t\log t =0.
$$
We recall the entropy condition
\begin{equation}\label{Eq:EntropyEst}
E_\phi (f)
\ge
d\left (
1+\log ({\textstyle{\frac \pi 2}})
\right ),
\quad \text{when}\quad
\nm f{L^2}\nm \phi {L^2}=1,
\end{equation}
which is essential in certain types of estimates of
the kinetic energy in quantum systems
(see e.{\,}g. \cite{MajLab1,MajLab2,Lie1,LieSol}
and the references therein).

\par

For the Orlicz modulation space
\begin{equation}\label{Eq:SpecOrlModSpIntro}
M^\Phi (\rr d),
\qquad \Phi (t)=-t^2\log t,\quad 0\le  t\le e^{-\frac 23}
\end{equation}
we observe that the Young function $\Phi$ resembles
with the structures of the entropy functional $E_\phi$.
A question then appear whether the space in
\eqref{Eq:SpecOrlModSpIntro} is better designed
as domain for $E_\phi$, compared to the strictly
larger space $M^2(\rr d)=L^2(\rr d)$, which is usually taken
as the domain for $E_\phi$ (cf. \cite{Lie1,LieSol}). 

\par

We also notice that $M^\Phi (\rr d)$ in \eqref{Eq:SpecOrlModSpIntro}
makes sense, while
\begin{equation}\label{Eq:NotOrliczSpaceIntro}
L^\Phi (\rr d), \qquad \Phi (t)=-t^2\log t,\qquad 0\le  t<\infty
\end{equation}
does not makes sense as an Orlicz space.
(See Theorem \ref{Thm:EntropyCont} and
Lemma \ref{Lem:OrlModCloseM2}
in Section \ref{sec3} for details.)

\medspace

In the first part of the paper we investigate
mapping properties for
pseudo-differential operators $\op (a)$
with symbols $a$ belonging to suitable
modulation spaces or Orlicz modulation
spaces, when acting on
Orlicz modulation spaces. In particular
we find suitable conditions on the
Young functions
% (or in some situations
% quasi-Young functions)
$\Phi _j$, $\Phi$, $\Psi _j$ and
$\Psi$, $j=1,2$, in order to the
pseudo-differential operators
\begin{align}
\op (a) &: M^{\Phi _1,\Psi _1}(\rr d)
\to
M^{\Phi _2^*,\Psi _2^*}(\rr d)
\label{Eq:PsDOpOrlModContIntr1}
\intertext{and}
\op (a) &: M^{\Phi ^*,\Psi ^*}(\rr d)
\to
W^{\Phi ,\Psi}(\rr d)
\label{Eq:PsDOpOrlModContIntr2}
\end{align}
are well-defined and continuous.

\par

For example, the following two propositions are
consequences of Theorems
\ref{Thm:PseudoCont1} and \ref{Thm:Wpseudos}
in Section \ref{sec2}. Here and in what follows we
let $p'\in [1,\infty ]$ be the conjugate Lebesgue
exponent of $p\in [1,\infty ]$, i.{\,}e. $p$ and
$p'$ should satisfy $\frac 1p+\frac 1{p'}=1$,
and similarly for other Lebesgue exponents.

\par

\begin{prop}\label{Prop:PseudoContIntro1}
Let $p,q\in [1,\infty ]$ be such that $q\le p$ and $p>1$.
Also let
$\Phi _j,\Psi _j : [0,\infty ]\to [0,\infty ]$, $j=1,2$,
be such that  $t\mapsto \Phi _j(t^{\frac 1{p'}})$
and
$t\mapsto \Psi _j(t^{\frac 1{p'}})$ are
Young functions 
which fulfill the $\Delta _2$-condition,
and
\begin{alignat*}{3}
\Phi _1(t),\Phi _2(t) &\gtrsim t^{q'} & \quad
\Psi _1(t),\Psi _2(t) &\gtrsim t^{q'},
& \quad t &\ge 0,
\intertext{and}
\Phi _1^{-1}(s)\Phi _2^{-1}(s)
&\lesssim
s^{\frac 1{p'}+\frac 1{q'}}, &
\quad
\Psi _1^{-1}(s)\Psi _2^{-1}(s)
&\lesssim
s^{\frac 1{p'}+\frac 1{q'}}, &
\quad s &\ge 0.
\end{alignat*}
If $a\in M^{p,q}(\rr {2d})$,
then $\op (a)$ is continuous from
$M^{\Phi _1,\Psi _1}(\rr d)$
to
$M^{\Phi _2^*,\Psi _2^*}(\rr d)$.
\end{prop}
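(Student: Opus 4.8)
The plan is to deduce Proposition \ref{Prop:PseudoContIntro1} as a special case of the (more general) Theorem \ref{Thm:PseudoCont1}, whose hypotheses are phrased directly in terms of the Young functions $\Phi_j$, $\Psi_j$ and a symbol class of the form $M^{\Phi,\Psi}(\rr{2d})$. The first step is therefore a translation: the classical mixed-norm space $M^{p,q}(\rr{2d})$ is the Orlicz modulation space $M^{\Phi_0,\Psi_0}(\rr{2d})$ with $\Phi_0(t)=t^p$ and $\Psi_0(t)=t^q$, so one only has to check that this particular choice of $(\Phi_0,\Psi_0)$, together with the given $\Phi_j,\Psi_j$, satisfies the full list of hypotheses of Theorem \ref{Thm:PseudoCont1}. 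I expect the matching conditions to be of two kinds: an ``$\Delta_2$ plus lower-bound'' requirement on each Young function (which is exactly why $t\mapsto\Phi_j(t^{1/p'})$ is assumed to be a $\Delta_2$ Young function and why we impose $\Phi_j(t),\Psi_j(t)\gtrsim t^{q'}$), and a ``Hölder-type'' balance condition relating the inverse Young functions, which here is $\Phi_1^{-1}(s)\Phi_2^{-1}(s)\lesssim s^{1/p'+1/q'}$ and its $\Psi$-analogue. The condition $q\le p$ with $p>1$ should be what guarantees that the symbol exponent pair $(p,q)$ sits in the admissible range (so that $M^{p,q}$ is large enough to contain the relevant kernels yet small enough for boundedness), mirroring the constraints $q\le p$, $p_j,q_j\le q'$ in the classical statement quoted in the abstract.

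Concretely, the steps I would carry out are: (i) record the identification $M^{p,q}=M^{\Phi_0,\Psi_0}$ with $\Phi_0(t)=t^p$, $\Psi_0(t)=t^q$, and note $\Phi_0^{-1}(s)=s^{1/p}$, $\Psi_0^{-1}(s)=s^{1/q}$; (ii) verify the $\Delta_2$ and regularity hypotheses that Theorem \ref{Thm:PseudoCont1} places on $\Phi_0,\Psi_0$ — these are immediate for pure powers with $p,q\ge 1$ — and likewise note that the hypotheses on $\Phi_j,\Psi_j$ in Proposition \ref{Prop:PseudoContIntro1} are literally copied from those of the theorem; (iii) check the inverse-function balance conditions of Theorem \ref{Thm:PseudoCont1} linking $\Phi_0$ to $\Phi_1,\Phi_2$ (and $\Psi_0$ to $\Psi_1,\Psi_2$): with $\Phi_0^{-1}(s)=s^{1/p}$ the required inequality should read $\Phi_0^{-1}(s)\,\Phi_1^{-1}(s)\,\Phi_2^{-1}(s)\lesssim s^{1+1/q'}$ or similar, i.e. $s^{1/p}\cdot(\Phi_1^{-1}(s)\Phi_2^{-1}(s))\lesssim s^{1/p}\cdot s^{1/p'+1/q'}=s^{1+1/q'}$, which is exactly the hypothesis we are given once one uses $\tfrac1p+\tfrac1{p'}=1$; and (iv) check the dual lower-bound conditions, where $\Phi_1(t),\Phi_2(t)\gtrsim t^{q'}$ translates into the upper bound $\Phi_j^{-1}(s)\lesssim s^{1/q'}$ needed so that the output space $M^{\Phi_2^*,\Psi_2^*}$ is well-defined and the target duality pairing closes. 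Once all hypotheses of Theorem \ref{Thm:PseudoCont1} are verified for this data, its conclusion gives continuity of $\op(a):M^{\Phi_1,\Psi_1}(\rr d)\to M^{\Phi_2^*,\Psi_2^*}(\rr d)$ for $a\in M^{p,q}(\rr{2d})$, which is precisely the assertion.

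The main obstacle I anticipate is bookkeeping rather than deep mathematics: making sure that the normalization conventions for Young functions, their inverses $\Phi^{-1}$, their conjugates $\Phi^*$, and the associated Orlicz (modulation) norms used in Theorem \ref{Thm:PseudoCont1} are applied consistently when specialized to powers, so that the exponent arithmetic $\tfrac1{p'}+\tfrac1{q'}$ in the hypotheses of the Proposition lines up exactly with whatever combined exponent appears in the general theorem. In particular one must be careful that the substitution $t\mapsto t^{1/p'}$ used to phrase the $\Delta_2$ hypothesis is the same reparametrization under which the general theorem's conditions are stated, and that the endpoint cases $p=\infty$ (allowed since only $p>1$ is required) and $q=1$ or $q=\infty$ do not fall outside the scope of Theorem \ref{Thm:PseudoCont1}; a brief remark handling these boundary exponents, or a reference to the corresponding remark after Theorem \ref{Thm:PseudoCont1}, should suffice. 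No genuinely new estimate is needed — the content is entirely in Theorem \ref{Thm:PseudoCont1}, and the Proposition is its instantiation at $(\Phi_0,\Psi_0)=(t^p,t^q)$.
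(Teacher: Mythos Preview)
Your overall strategy is correct and matches the paper's: Proposition~\ref{Prop:PseudoContIntro1} is deduced as an immediate special case of Theorem~\ref{Thm:PseudoCont1} (with trivial weights $\omega_0=\omega_1=\omega_2=1$), and the paper indeed leaves the details to the reader.

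However, you have misread the form of Theorem~\ref{Thm:PseudoCont1}. Its symbol class is already $M^{p,q}_{(\omega_0)}(\rr{2d})$ with Lebesgue exponents $p,q$, \emph{not} a general Orlicz space $M^{\Phi_0,\Psi_0}$. Consequently your steps (i) and (iii) --- identifying $M^{p,q}$ with an Orlicz space via $\Phi_0(t)=t^p$, and checking a three-factor balance condition $\Phi_0^{-1}(s)\Phi_1^{-1}(s)\Phi_2^{-1}(s)\lesssim s^{1+1/q'}$ --- are unnecessary: no such condition appears in the theorem. The hypotheses of Theorem~\ref{Thm:PseudoCont1} on $\Phi_j,\Psi_j$ are, verbatim, local versions of those in the Proposition (local $\Delta_2$-condition, the inequalities \eqref{Eq:WignerDistYoungFuncCond1Ver2}--\eqref{Eq:WignerDistYoungFuncCond2Ver2} required only near the origin, and the essential inverse $\Phi_j^{-\&}$ in place of $\Phi_j^{-1}$). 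The only checks actually needed are: the global $\Delta_2$-condition implies the local one; the assumption that $t\mapsto\Phi_j(t^{1/p'})$ is a Young function means $\Phi_j$ is $p'$-steered in the sense of Definition~\ref{Def:SteeredFunc}(2); and a $\Delta_2$ Young function is strictly increasing and finite, so $\Phi_j^{-\&}=\Phi_j^{-1}$. With these observations the hypotheses of Theorem~\ref{Thm:PseudoCont1} are immediate, and the verification is simpler than you anticipated.
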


\par

\begin{prop}\label{Prop:WpseudosIntro}
Let $\Phi$ and $\Psi$ be Young functions
which satisfy the $\Delta _2$-condition,
and let
$a\in W^{\Psi ,\Phi}(\rr {2d})$.
Then the definition of $\op (a)$
is continuous from
$M^{\Phi ^*,\Psi ^*}(\rr d)$
to $W^{\Psi ,\Phi}(\rr
d)$.
\end{prop}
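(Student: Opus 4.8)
The plan is to deduce Proposition~\ref{Prop:WpseudosIntro} directly from Theorem~\ref{Thm:Wpseudos}, of which it is the diagonal case. Theorem~\ref{Thm:Wpseudos} gives, for a family of Young functions satisfying certain index inequalities together with $\Delta_2$-type requirements, the continuity of $\op(a)$ from $M^{\Phi^*,\Psi^*}(\rd)$ into a Wiener-amalgam-type Orlicz modulation space; specialising all of its Young functions to $\Phi$ and $\Psi$ (in the slots that produce the symbol class $W^{\Psi,\Phi}(\rdd)$ and the target $W^{\Psi,\Phi}(\rd)$), the index inequalities reduce to relations that hold automatically, such as $\Phi^{-1}(s)\,(\Phi^*)^{-1}(s)\asymp s$ and $\Psi^{-1}(s)\,(\Psi^*)^{-1}(s)\asymp s$, valid for every Young function and its complementary function. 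Hence in the diagonal case the only surviving hypotheses are the $\Delta_2$-conditions on $\Phi$ and $\Psi$, which are precisely what is assumed; the single point requiring a line of verification is that this diagonal substitution is admissible in Theorem~\ref{Thm:Wpseudos} and that the collapse of the index conditions really occurs.

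Unpacking that theorem, the mechanism I would rely on has two ingredients. First, one passes from the symbol $a$ to the Schwartz kernel $K_a$ of $\op(a)$; since $K_a$ arises from $a$ by a partial Fourier transform composed with a linear change of variables on $\rdd$, and since both operations act continuously on Orlicz modulation spaces --- the change of variables merely permutes and mixes the time-frequency coordinates in a controlled way, and the partial Fourier transform interchanges two of them --- the condition $a\in W^{\Psi,\Phi}(\rdd)$ is equivalent to $K_a$ belonging to a fixed Orlicz modulation space on $\rd\times\rd$ built from $\Phi$ and $\Psi$. Second, one applies a kernel theorem: an integral operator whose kernel lies in that space is continuous from $M^{\Phi^*,\Psi^*}(\rd)$ to $W^{\Psi,\Phi}(\rd)$. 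I would prove this by discretisation --- expanding $f$, $K_a$ and the output in Gabor frames with a Schwartz window turns $\op(a)$ into a bi-infinite matrix acting on mixed-norm Orlicz sequence spaces, and the claim reduces to a H\"older/Young-type inequality there. The $\Delta_2$-condition on $\Phi$ and $\Psi$ enters at this stage to make the Gabor expansions converge in norm (so that summation may be interchanged with $\op(a)$) and to identify the relevant sequence-space duals; together with the density of $\mascS(\rd)$ in the spaces involved, which it also provides, this is what allows the a priori map $\op(a)\colon\mascS(\rd)\to\mascS'(\rd)$ to extend by continuity.

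The step I expect to be the main obstacle is the sequence-space estimate inside the kernel theorem. One must bound a Gabor matrix $(c_{\lambda\mu})$ built from the short-time Fourier transform of the symbol $a$ when it acts from an $\ell^{\Phi^*,\Psi^*}$-type space into an $\ell^{\Psi,\Phi}$-type space, and the mixed-norm structure forces one to treat separately the ``local'' summation over the input lattice (governed by the complementary functions $\Phi^*$, $\Psi^*$) and the ``global'' behaviour of the output (governed by $\Phi$, $\Psi$). Keeping the constants uniform across these two regimes, and ensuring that no gap opens between the Luxemburg and the Amemiya (Orlicz) norms in the intermediate steps, is exactly where the $\Delta_2$-hypotheses do their work. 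Once this is settled, the passage back from kernels to symbols and the verification that the index conditions of Theorem~\ref{Thm:Wpseudos} are met in the diagonal case are routine.
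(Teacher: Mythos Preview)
Your top-level claim --- that Proposition~\ref{Prop:WpseudosIntro} is a special case of Theorem~\ref{Thm:Wpseudos} --- is correct, and this is exactly how the paper treats it: the proposition is announced in the introduction and the general weighted version is proved in Section~\ref{sec2}, with no separate argument for the proposition itself.

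However, your description of Theorem~\ref{Thm:Wpseudos} is inaccurate. That theorem is already stated for a \emph{single} pair $(\Phi ,\Psi )$; there is no ``family of Young functions satisfying certain index inequalities'' and hence no diagonal specialisation to perform, and no relations of the type $\Phi ^{-1}(s)(\Phi ^*)^{-1}(s)\asymp s$ to verify. You appear to be conflating Theorem~\ref{Thm:Wpseudos} with Theorem~\ref{Thm:PseudoCont1}, which does carry conditions on $\Phi _j,\Psi _j$ of that kind. The only specialisation needed to pass from Theorem~\ref{Thm:Wpseudos} to Proposition~\ref{Prop:WpseudosIntro} is to take the weights $\omega ,\omega _1,\omega _2$ identically equal to~$1$ (and to note that the global $\Delta _2$-condition you assume implies the local one required there).

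Your proposed mechanism for Theorem~\ref{Thm:Wpseudos} --- passing to the kernel $K_a$ and then running a Gabor-discretised kernel theorem with mixed Orlicz sequence-space estimates --- is also not what the paper does. The paper's proof is a direct duality argument: one writes $(\op _0(a)f,g)_{L^2}=(2\pi )^{-d/2}(T\widehat a,V_{\overline g}\overline f)_{L^2}$ with $T$ the torsion $F(x,\xi )\mapsto F(-\xi ,x)$, observes that $\nm {T\widehat a}{M^{\Phi ,\Psi}_{(1/\omega _0)}}=\nm a{W^{\Psi ,\Phi}_{(\omega )}}$, and then invokes Theorem~\ref{Thm:CordOkdjP3.3} (the STFT estimate $\nm {V_fg}{M^{\Phi ^*,\Psi ^*}_{(\omega _0)}}\lesssim \nm f{M^{\Phi ^*,\Psi ^*}_{(\omega _1)}}\nm g{W^{\Psi ^*,\Phi ^*}_{(1/\omega _2)}}$) to bound the pairing. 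The $\Delta _2$-condition enters only to secure density of $\maclS _{1/2}$ and the duality $(W^{\Psi ^*,\Phi ^*}_{(1/\omega _2)})'=W^{\Psi ,\Phi}_{(\omega _2)}$. No discretisation, sequence spaces, or kernel computations are involved, and the ``main obstacle'' you anticipate simply does not arise.
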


\par

More generally, we deduce weighted versions
of such continuity results, as well as
relax the assumptions on the Young
functions in such way that
they only need to fulfill a
\emph{local} $\Delta _2$-condition
near origin (see Definition \ref{Def:Delta2Cond}).
The essential ingredient for such local
condition is the fact that Orlicz
modulation spaces are completely
determined by the behaviour of the
Young functions \emph{near origin}, and the
involved weight functions. (See e.{\,}g.
\cite[Proposition 5.11]{ToUsNaOz}.)
Since Orlicz spaces contain Lebesgue spaces
as special cases, it follows that
Orlicz modulation spaces contain
the classical modulation spaces.
Hence, our results also lead
to continuity properties for
pseudo-differential operators acting
on (classical) modulation spaces.
More specific,
by choosing the Young functions in
Proposition \ref{Prop:PseudoContIntro1}
and involved weight functions in
suitable ways, our main
result Theorem \ref{Thm:PseudoCont1}
in Section \ref{sec2} include
the optimal result
\cite[Theorem 5.1]{CorNic2} by Cordero
and Nicola as special case.
In the case of unweighted spaces
\cite[Theorem 5.1]{CorNic2} attains
the following form.

\par

\begin{prop}\label{Prop:CorNicSpecCase}
Suppose that $p,p_j,q,q_j\in [1,\infty ]$, $j=1,2$,
satisfy
$$
\frac 1{p'}+\frac 1{q'} \le
\frac 1{p_1}+\frac 1{p_2},
\quad
\frac 1{p'}+\frac 1{q'} \le
\frac 1{q_1}+\frac 1{q_2},
\quad %\text{and}\quad
p_1,q_1,p_2,q_2 \le q',
\quad q\le p,
$$
and let $a\in M^{p,q}(\rr {2d})$. Then
\begin{equation}
\op (a) : M^{p_1,q_1}(\rr d)
\to
M^{p_2',q_2'}(\rr d),
\end{equation}
is continuous.
\end{prop}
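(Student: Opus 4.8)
The plan is to read Proposition \ref{Prop:CorNicSpecCase} off from Theorem \ref{Thm:PseudoCont1} (of which Proposition \ref{Prop:PseudoContIntro1} is the unweighted version) by specialising the Young functions to powers and the weights to constants. Concretely, for $j=1,2$ I would choose Young functions $\Phi _j,\Psi _j$ with $\Phi _j(t)\asymp t^{p_j}$ and $\Psi _j(t)\asymp t^{q_j}$ near $t=0$, and (for $j=2$) also near $t=\infty$. Since an Orlicz modulation space is determined by the behaviour of its defining Young functions near the origin, together with the involved weights (cf. \cite[Proposition 5.11]{ToUsNaOz}), this gives $M^{\Phi _1,\Psi _1}(\rr d)=M^{p_1,q_1}(\rr d)$; and since the Young conjugate of $t\mapsto t^{p}$ is equivalent near $0$ to $t\mapsto t^{p'}$, the near-infinity normalisation of $\Phi _2,\Psi _2$ gives $M^{\Phi _2^*,\Psi _2^*}(\rr d)=M^{p_2',q_2'}(\rr d)$, which is exactly the target space in the statement. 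Hence the assertion will follow once the hypotheses of Proposition \ref{Prop:PseudoContIntro1} are checked for this choice.

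That verification is bookkeeping. The local $\Delta _2$-condition holds for every power $t^r$, $1\le r<\infty$. The lower bounds $\Phi _j(t),\Psi _j(t)\gtrsim t^{q'}$ near $0$ reduce to $p_j\le q'$ and $q_j\le q'$, which are among the standing assumptions. Finally, with $\Phi _j^{-1}(s)\asymp s^{1/p_j}$ and $\Psi _j^{-1}(s)\asymp s^{1/q_j}$ near $0$, one has $\Phi _1^{-1}(s)\Phi _2^{-1}(s)\asymp s^{1/p_1+1/p_2}$ and $\Psi _1^{-1}(s)\Psi _2^{-1}(s)\asymp s^{1/q_1+1/q_2}$, so the bounds $\Phi _1^{-1}(s)\Phi _2^{-1}(s)\lesssim s^{1/p'+1/q'}$ and $\Psi _1^{-1}(s)\Psi _2^{-1}(s)\lesssim s^{1/p'+1/q'}$ become precisely $\frac 1{p_1}+\frac 1{p_2}\ge \frac 1{p'}+\frac 1{q'}$ and $\frac 1{q_1}+\frac 1{q_2}\ge \frac 1{p'}+\frac 1{q'}$, again the stated hypotheses. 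Proposition \ref{Prop:PseudoContIntro1} then yields the continuity of $\op (a):M^{p_1,q_1}(\rr d)\to M^{p_2',q_2'}(\rr d)$ when $p>1$.

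The point needing care — and the main obstacle — is that the naive power is not always admissible in Proposition \ref{Prop:PseudoContIntro1}: the composition $t\mapsto \Phi _j(t^{1/p'})\asymp t^{p_j/p'}$ is convex only when $p_j\ge p'$ (and similarly for $\Psi _j$), and the boundary exponents $1$ and $\infty$, together with the value $p=1$ (which lies outside Proposition \ref{Prop:PseudoContIntro1}), must be handled separately. I would remove the first difficulty by a monotonicity step: replace $p_j$ by $\widetilde p_j:=\max (p_j,p')$ and $q_j$ by $\widetilde q_j:=\max (q_j,p')$, so that $t\mapsto t^{\widetilde p_j/p'}$ and $t\mapsto t^{\widetilde q_j/p'}$ are convex. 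Using $p_1,q_1,p_2,q_2\le q'$ together with $p'\le q'$ (which holds since $q\le p$), one checks $\widetilde p_j,\widetilde q_j\le q'$ and that the two sum inequalities survive with $\widetilde p_j,\widetilde q_j$ in place of $p_j,q_j$; applying the previous two paragraphs to the enlarged exponents and composing with the continuous inclusions $M^{p_1,q_1}\hookrightarrow M^{\widetilde p_1,\widetilde q_1}$ and $M^{\widetilde p_2',\widetilde q_2'}\hookrightarrow M^{p_2',q_2'}$ recovers the full statement for $p>1$. Finally, if $p=1$ then necessarily $q=1$, so $a\in M^1(\rr {2d})$; the classical kernel theorem then gives a continuous operator $\op (a):M^\infty (\rr d)\to M^1(\rr d)$, and composing with the continuous inclusions $M^{p_1,q_1}(\rr d)\hookrightarrow M^\infty (\rr d)$ and $M^1(\rr d)\hookrightarrow M^{p_2',q_2'}(\rr d)$ yields every mapping required in that case.
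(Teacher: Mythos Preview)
Your argument is essentially correct, and at the top level it matches the paper's approach: the paper simply declares that Proposition~\ref{Prop:CorNicSpecCase} is a special case of Theorem~\ref{Thm:PseudoCont1} (take $\Phi_j=\Phi_{[p_j]}$, $\Psi_j=\Phi_{[q_j]}$ and trivial weights), leaving the routine verification implicit.

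Where you diverge is in the detour through Proposition~\ref{Prop:PseudoContIntro1} and the monotonicity replacement $\widetilde p_j=\max(p_j,p')$. This extra work stems from a small misreading: Proposition~\ref{Prop:PseudoContIntro1} is \emph{not} merely the unweighted form of Theorem~\ref{Thm:PseudoCont1}; it carries the stronger hypothesis that $t\mapsto\Phi_j(t^{1/p'})$ be a Young function satisfying the global $\Delta_2$-condition. Theorem~\ref{Thm:PseudoCont1} instead asks only that $\Phi_j$ be \emph{$p'$-steered} (Definition~\ref{Def:SteeredFunc}) and satisfy a \emph{local} $\Delta_2$-condition. For $\Phi_{[p_j]}$ with $p_j<p'$ one has $\limsup_{t\to 0+}t^{p_j}/t^{p'}=\infty$, so clause~(1) of Definition~\ref{Def:SteeredFunc} already gives $p'$-steeredness without any modification of the exponents; for $p_j\ge p'$ clause~(2) applies. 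Likewise $\Phi_{[\infty]}$ satisfies the local $\Delta_2$-condition (take $r=\tfrac12$), so the endpoint $p_j=\infty$ --- where your route via Proposition~\ref{Prop:PseudoContIntro1} actually has a small gap, since $\Phi_{[\infty]}$ fails the global $\Delta_2$-condition and the replacement $\widetilde p_j=\max(p_j,p')$ leaves it unchanged --- is covered directly. In short, invoking Theorem~\ref{Thm:PseudoCont1} itself makes both the monotonicity step and the separate endpoint discussion unnecessary; your handling of $p=1$ via $M^1$ and embeddings agrees with what Theorem~\ref{Thm:PseudoCont1} gives in that case.
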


\par

Proposition
\ref{Prop:CorNicSpecCase}
is a special
case of Theorem \ref{Thm:PseudoCont1}
in Section \ref{sec2}. If in addition
$p>1$, then Proposition
\ref{Prop:CorNicSpecCase}
also follows from
Proposition \ref{Prop:PseudoContIntro1}.
We also observe that for weighted (Orlicz)
modulation spaces,
Theorem \ref{Thm:PseudoCont1}
in Section \ref{sec2} permits more general
weights in the involved spaces, compared to
\cite[Theorem 5.1]{CorNic2}.

\par

There are relevant situations where
Proposition \ref{Prop:PseudoContIntro1} and
its extension Theorem \ref{Thm:PseudoCont1}
can be applied, while earlier classical results in
e.{\,}g.
\cite{CorNic,GH1,GH2,Teo1,
TeoTof,Toft5,Toft6,Toft8B,Toft28,Toft15,Toft16,Toft15B}
seem not to be applicable. For example it follows from
Proposition \ref{Prop:PseudoContIntro1} that if $p>2$,
$a\in M^{p,p'}(\rr {2d})$, then the map
\begin{equation}\label{Eq:SpecYoungFunc}
\op (a) : M^\Phi (\rr d)\to M^\Phi (\rr d),
\quad
\Phi (t) = -t^2\log t,\ t\in [0,e^{-\frac 23}],
\end{equation}
on Orlicz spaces in \eqref{Eq:SpecOrlModSpIntro},
is continuous. (See Example
\ref{Example:ContPseudo} in Section \ref{sec2}
and Remark \ref{Rem:DiscontPseudo} in Section
\ref{sec3}.) Any similar continuity property is obviously
not reachable from the investigations in
\cite{CorNic,GH1,GH2,Teo1,
TeoTof,Toft5,Toft6,Toft8B,Toft28,Toft15,Toft16,Toft15B}.

\par

In the last part of the paper we investigate
continuity of the entropy functional $E_\phi$
when acting on $M^p(\rr d)$ for $p\in [1,2]$
and $M^\Phi (\rr d)$ in \eqref{Eq:SpecOrlModSpIntro}.
More precisely, in Section \ref{sec3} we show that
$E_\phi$ is continuous on $M^\Phi (\rr d)$ and on
$M^p(\rr d)$ for $p\in [1,2)$, but fails to be continuous
on $M^2(\rr d)$. This might be surprising due to
the embeddings
$$
M^p(\rr d)\subseteq M^\Phi (\rr d) \underset{\text{dense}}
\subseteq M^2(\rr d), \qquad p<2,
$$
which shows that $M^\Phi (\rr d)$
in some sense is close to $M^2(\rr d)$.
See Theorem \ref{Thm:EntropyCont} and
Lemma \ref{Lem:OrlModCloseM2}
for details.

\par

%\textbf{(Toeplitz operators)}

\par

\section*{Acknowledgement}
Joachim Toft was supported by Vetenskapsr{\aa}det,
project number 2019-04890. Anupam Gumber and Nimit Rana thanks the Linnaeus 
university for providing excellent research facilities and kind hospitality
during their academic visit. Gumber was also supported by the Austrian
Science Fund (FWF) project P33217.

\par

%%%%%%%%%%%%%%%%%%%%%
\section{Preliminaries}\label{sec1}
%%%%%%%%%%%%%%%%%%%%%

\par

In the section we recall some basic facts on Gelfand-Shilov
spaces, Orlicz spaces, Orlicz modulation spaces,
pseudo-differential operators and Wigner distributions.
We also give some examples on Young functions,
Orlicz spaces and Orlicz modulation spaces. 
(See Examples \ref{Eq:SpecYoungFunc} and
\ref{Ex:OrlModSpace1}.) Notice that
Young functions are fundamental in the definition of Orlicz
spaces and Orlicz modulation spaces).

\par

\subsection{Gelfand-Shilov spaces}\label{subsec-Gelfand-Shilov}

\par

For a real number $s>0$, the (standard Fourier invariant) Gelfand-Shilov
space $\mathcal S_{s}(\rr d)$
($\Sigma _{s}(\rr d)$) of Roumieu type (Beurling type)
consists of all $f\in C^\infty (\rr d)$ such that
\begin{equation}\label{gfseminorm}
\nm f{\mathcal S_{s,h}}
\equiv
\sup_{\substack{\alpha, \beta \in \mathbf N^d \\ x\in \rr d}}
\frac {|x^\beta \partial ^\alpha
f(x)|}{h^{|\alpha  + \beta |}(\alpha ! \beta !)^s}
\end{equation}
is finite for some $h>0$ (for every $h>0$). We equip
$\mathcal S_{s}(\rr d)$ ($\Sigma _{s}(\rr d)$) by the canonical inductive limit
topology (projective limit topology) with respect to $h>0$, induced by
the semi-norms defined in \eqref{gfseminorm}.

\par

We have
\begin{equation}\label{GSembeddings}
\begin{aligned}
\maclS _s (\rr d) &\hookrightarrow \Sigma _t(\rr d)
\hookrightarrow \maclS _t (\rr d)
\hookrightarrow
\mascS (\rr d)
\\[1ex]
&\hookrightarrow \mascS '(\rr d) %\hspace{2cm} \phantom k
\hookrightarrow  \maclS _t'(\rr d)
\hookrightarrow  \Sigma _t'(\rr d) \hookrightarrow
\maclS _s '(\rr d),
\quad \frac 12\le s<t,
\end{aligned}
\end{equation}
with dense embeddings.
Here $A\hookrightarrow B$ means that
the topological space $A$ is continuously embedded in the 
topological space $B$. We also have
$$
\maclS _s(\rr d)=\Sigma _t(\rr d)= \{ 0\} ,\qquad
s<\frac 12,\ t\le \frac 12.
$$

\par

The \emph{Gelfand-Shilov distribution spaces} $\maclS _s'(\rr d)$ 
and $\Sigma _s'(\rr d)$, of Roumieu and Beurling 
types respectively, are the (strong) duals of $\mathcal S_s(\rr d)$ 
and $\Sigma _s(\rr d)$, respectively. It follows that if
$\mathcal S_{s,h}'(\rr d)$ is the $L^2$-dual of
$\mathcal S_{s,h}(\rr d)$ and $s\ge \frac 12$
($s > \frac 12$),
then $\mathcal S_s'(\rr d)$
($\Sigma _s'(\rr d)$) can be identified
with the projective limit (inductive limit) of
$\mathcal S_{s,h}'(\rr d)$ with respect to $h>0$. It follows that
\begin{equation}\label{Eq:GSspacecond2}
\mathcal S_s'(\rr d) = \bigcap _{h>0}\mathcal S_{s,h}'(\rr d)
\quad \text{and}\quad \Sigma _s'(\rr d) =\bigcup _{h>0}
\mathcal S_{s,h}'(\rr d)
\end{equation}
for such choices of $s$ and $\sigma$, see
\cite{GS,Pil1,Pil3} for details.

%The \emph{Gelfand-Shilov distribution spaces}
%$\mathcal S_{s}'(\rr d)$ and $\Sigma _s'(\rr d)$
%are the (strong) dual spaces of $\mathcal S_{s}(\rr d)$
%and $\Sigma _s(\rr d)$, respectively. As for the
%Gelfand-Shilov spaces there is a canonical
%projective limit topology (inductive limit
%topology) for $\maclS _{s}'(\rr d)$
%($\Sigma _s'(\rr d)$), see
%\cite{GS, Pil1, Pil3} for details. 

\par

We let the Fourier transform
$\mathscr F$ be given by
$$
(\mathscr Ff)(\xi )= \widehat f(\xi )
\equiv
(2\pi )^{-\frac d2}\int _{\rr
{d}} f(x)e^{-i\scal  x\xi }\, dx,
\quad \xi \in \rr d,
$$
when $f\in L^1(\rr d)$. Here $\scal \cdo \cdo$
denotes the usual
scalar product on $\rr d$.
The Fourier transform $\mathscr F$ extends
uniquely to homeomorphisms on $\mathscr S'(\rr d)$,
$\maclS _s'(\rr d)$ and on $\Sigma _s'(\rr d)$.
Furthermore,
$\mascF$ restricts to
homeomorphisms on $\mathscr S(\rr d)$,
$\maclS _s(\rr d)$ and on $\Sigma _s (\rr d)$,
and to a unitary operator on $L^2(\rr d)$.
Similar facts hold true
with partial Fourier transforms in place of
Fourier transform.

\par

Let $\phi \in \mascS  (\rr d)$ be fixed.
Then the \emph{short-time
Fourier transform} $V_\phi f$ of $f\in \mascS '
(\rr d)$ with respect to the \emph{window function} $\phi$ is
the tempered distribution on $\rr {2d}$, defined by
$$
V_\phi f(x,\xi )  =
\mascF (f \, \overline {\phi (\cdo -x)})(\xi ), \quad x,\xi \in \rr d.
$$
If $f ,\phi \in \mascS (\rr d)$, then it follows that
$$
V_\phi f(x,\xi ) = (2\pi )^{-\frac d2}\int _{\rr d} f(y)\overline {\phi
(y-x)}e^{-i\scal y\xi}\, dy, \quad x,\xi \in \rr d.
$$

\par

By \cite[Theorem 2.3]{Toft28} it follows that the definition of the map
$(f,\phi)\mapsto V_{\phi} f$ from $\mascS (\rr d) \times \mascS (\rr d)$
to $\mascS(\rr {2d})$ is uniquely extendable to a continuous map from
$\maclS _s'(\rr d)\times \maclS_s'(\rr d)$
to $\maclS_s'(\rr {2d})$, and restricts to a continuous map
from $\maclS _s (\rr d)\times \maclS _s (\rr d)$
to $\maclS _s(\rr {2d})$.
The same conclusion holds with $\Sigma _s$ in place of
$\maclS_s$, at each occurrence.

\par

In the following proposition we give characterizations of 
Gelfand-Shilov
spaces and their distribution spaces in terms of estimates
of the short-time Fourier transform.
We omit the proof since the first part follows from
\cite[Theorem 2.7]{GroZim}
and the second part from \cite[Proposition 2.2]{Toft18}.
See also \cite{CPRT10} for related results.
Here and in what follows, the notation
$A(\theta )\lesssim B(\theta )$, $\theta \in \Omega$,
means that there is a constant $c>0$ such that
$A(\theta )\le cB(\theta )$ holds
for all $\theta \in \Omega$. We also set
$A(\theta )\asymp B(\theta )$ when
$A(\theta )\lesssim B(\theta )\lesssim A(\theta )$.

\par

\begin{prop}\label{stftGelfand2}
Let $s\ge \frac 12$ ($s>\frac 12$), $\phi \in \maclS _s(\rr d)\setminus 0$
($\phi \in \Sigma _s(\rr d)\setminus 0$) and let $f$ be a
Gelfand-Shilov distribution on $\rr d$. Then the following is true:
\begin{enumerate}
\item $f\in \maclS _s (\rr d)$ ($f\in \Sigma_s(\rr d)$), if and only if
\begin{equation}\label{stftexpest2}
|V_\phi f(x,\xi )| \lesssim  e^{-r (|x|^{\frac 1s}+|\xi |^{\frac 1s})}, \quad x,\xi \in \rr d,
\end{equation}
for some $r > 0$ (for every $r>0$).
\item $f\in \maclS _s'(\rr d)$ ($f\in \Sigma _s'(\rr d)$), if and only if
\begin{equation}\label{stftexpest2Dist}
|V_\phi f(x,\xi )| \lesssim  e^{r(|x|^{\frac 1s}+|\xi |^{\frac 1s})}, \quad
x,\xi \in \rr d,
\end{equation}
for every $r > 0$ (for some $r > 0$).
\end{enumerate}
\end{prop}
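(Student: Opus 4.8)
The plan is to deduce all four equivalences from the two standard mechanisms of time-frequency analysis: the \emph{covariance} of $V_\phi$ under translations, modulations and differentiations, and the \emph{reconstruction} formula (equivalently the Moyal identity) for the short-time Fourier transform. I would carry out the Roumieu case $\maclS_s(\rd)$, $s\ge\tfrac12$, in detail and obtain the Beurling case $\Sigma_s(\rd)$ by systematically replacing ``for some $r>0$'' by ``for every $r>0$'' — and the reverse for the distribution spaces. Since $f$ is only a Gelfand--Shilov distribution, the manipulations below are justified by the continuous extension of $(f,\phi)\mapsto V_\phi f$ recalled above (\cite[Theorem 2.3]{Toft28}): one first argues for $f$, or for the auxiliary windows, in $\maclS_s(\rd)$, and then passes to the general statement by density and duality. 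The only non-formal estimate used repeatedly is the elementary equivalence $\sup_{\alpha\in\mathbf N^d}\bigl(\varepsilon^{-|\alpha|}(\alpha!)^{-s}|t|^{|\alpha|}\bigr)\asymp e^{c(\varepsilon,s)|t|^{\frac1s}}$, with $c(\varepsilon,s)\to 0$ as $\varepsilon\to\infty$, which converts monomial bounds into exponential ones and back.

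\textbf{Part (1).} For $f\in\maclS_s(\rd)\Rightarrow\eqref{stftexpest2}$, start from $|V_\phi f(x,\xi)|\le(2\pi)^{-\frac d2}\int|f(y)|\,|\phi(y-x)|\,dy$; the Gelfand--Shilov decay $|f(y)|\lesssim e^{-r|y|^{\frac1s}}$ and $|\phi(y-x)|\lesssim e^{-r|y-x|^{\frac1s}}$ for some $r>0$, together with the inequality $|y|^{\frac1s}+|y-x|^{\frac1s}\gtrsim|x|^{\frac1s}$ (subadditivity when $\frac1s\le1$, convexity of $t\mapsto t^{\frac1s}$ when $\frac1s\ge1$), give $|V_\phi f(x,\xi)|\lesssim e^{-r'|x|^{\frac1s}}$, $r'>0$, uniformly in $\xi$ (a remainder $e^{-\frac r2|y-x|^{\frac1s}}$ being kept to integrate in $y$). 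Applying the same reasoning to $\xi^\alpha V_\phi f(x,\xi)$, which by covariance is a finite combination of terms $V_{\partial^\delta\phi}(\partial^\gamma f)(x,\xi)$, yields $|\xi^\alpha V_\phi f(x,\xi)|\lesssim h^{|\alpha|}(\alpha!)^s\,e^{-r'|x|^{\frac1s}}$, and optimizing over $\alpha$ turns this into decay $e^{-c|\xi|^{\frac1s}}$; combining the two bounds gives \eqref{stftexpest2}. For the converse, insert \eqref{stftexpest2} into the reconstruction formula $f(z)=c\iint V_\phi f(x,\xi)\,e^{i\scal z\xi}\,\phi(z-x)\,dx\,d\xi$ (valid for some constant $c\neq0$ since $\phi\neq0$), differentiate and multiply by $z^\beta$ under the integral, and bound $|z^\beta\partial^\alpha f(z)|$ by $\iint|V_\phi f(x,\xi)|\,\bigl|z^\beta\partial^\alpha\bigl(e^{i\scal z\xi}\phi(z-x)\bigr)\bigr|\,dx\,d\xi$; expanding the derivative produces factors $|\xi|^{|\alpha'|}$ and, via $z^\beta=((z-x)+x)^\beta$, factors $|x|^{|\beta'|}$ multiplying the Gelfand--Shilov decay of $\phi$, and part of the exponential decay of $V_\phi f$ absorbs these powers (yielding $h^{|\alpha|+|\beta|}(\alpha!\beta!)^s$) while the remaining decay secures convergence of the $(x,\xi)$-integral. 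This bounds the seminorms \eqref{gfseminorm}, so $f\in\maclS_s(\rd)$.

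\textbf{Part (2).} This is the dual statement, obtained from part (1) together with the Moyal identity $\iint V_\phi f(x,\xi)\,\overline{V_\psi g(x,\xi)}\,dx\,d\xi=c\,\scal fg$ ($c\neq0$, $\scal\psi\phi\neq0$) and the descriptions $\maclS_s'(\rd)=\bigcap_{h>0}\maclS_{s,h}'(\rd)$, $\Sigma_s'(\rd)=\bigcup_{h>0}\maclS_{s,h}'(\rd)$ from \eqref{Eq:GSspacecond2}. If $f\in\maclS_s'(\rd)$, realize $V_\phi f(x,\xi)$ as the pairing of $f$ with the time-frequency shift $e^{i\scal z\xi}\phi(z-x)$ of $\phi$ (in the variable $z$); its $\maclS_{s,h}(\rd)$-seminorm grows at most like $e^{c(h)(|x|^{\frac1s}+|\xi|^{\frac1s})}$ with $c(h)\to0$ as $h\to\infty$, so for a prescribed $r>0$ choose $h$ with $c(h)\le r$ and use $f\in\maclS_{s,h}'(\rd)$ to get \eqref{stftexpest2Dist}. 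Conversely, assume \eqref{stftexpest2Dist}; for $g\in\maclS_{s,h}(\rd)$, part (1) applied to $g$ gives $|V_\psi g(x,\xi)|\lesssim\nm g{\maclS_{s,h}}\,e^{-R(h)(|x|^{\frac1s}+|\xi|^{\frac1s})}$ with a rate $R(h)>0$ depending only on $h$; picking $r=R(h)/2$ (allowed since \eqref{stftexpest2Dist} holds for every $r>0$) makes $\iint|V_\phi f(x,\xi)|\,|V_\psi g(x,\xi)|\,dx\,d\xi$ converge with bound $C_h\nm g{\maclS_{s,h}}$, so by the Moyal identity $f$ extends to a continuous functional on each $\maclS_{s,h}(\rd)$, i.e. $f\in\maclS_s'(\rd)$. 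The Beurling versions are identical, with ``for some $r$'' in place of ``for every $r$'' (and conversely for the distribution spaces); in the converse of the Beurling distribution case one only needs $R(h)>r_0$ for the single admissible rate $r_0$, which holds for $h$ small enough, whence $f\in\maclS_{s,h}'(\rd)$ for some $h$ and thus $f\in\Sigma_s'(\rd)$.

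\textbf{Main obstacle.} The analytic inputs — convergence of the integrals, the covariance identities, and the two passages between monomial and exponential bounds — are routine in this setting. The genuinely delicate point is the bookkeeping of the exponential rate through the covariance, reconstruction and Moyal steps, so that the quantifier on $r$ ends up of the correct type: ``for some $r$'' for the Roumieu function space and the Beurling distribution space, ``for every $r$'' for the Beurling function space and the Roumieu distribution space. The restriction $s\ge\tfrac12$ is needed only so that $\maclS_s(\rd)\neq\{0\}$ — guaranteeing that the window $\phi$ exists and the statement is non-vacuous — in accordance with \eqref{GSembeddings}.
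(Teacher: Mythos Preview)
Your argument is correct and follows the standard route --- covariance identities plus the reconstruction formula for the function spaces, and the Moyal identity together with seminorm estimates of time-frequency shifts of the window for the distribution spaces. The paper itself omits the proof entirely, deferring to \cite[Theorem~2.7]{GroZim} for part~(1) and \cite[Proposition~2.2]{Toft18} for part~(2), where the same mechanisms are used; so there is nothing further to compare.
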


\par

\subsection{Weight functions}\label{subsec1.2}

\par

A \emph{weight} or \emph{weight function} on $\rr d$ is a
positive function $\omega
\in  L^\infty _{loc}(\rr d)$ such that $1/\omega \in  L^\infty _{loc}(\rr d)$.
The weight $\omega$ is called \emph{moderate},
if there is a positive weight $v$ on $\rr d$ such that
\begin{equation}\label{moderate}
\omega (x+y) \lesssim \omega (x)v(y),\qquad x,y\in \rr d.
\end{equation}
If $\omega$ and $v$ are weights on $\rr d$ such that
\eqref{moderate} holds, then $\omega$ is also called
\emph{$v$-moderate}.
We note that \eqref{moderate}
implies that $\omega$ fulfills
the estimates
\begin{equation}\label{moderateconseq}
v(-x)^{-1}\lesssim \omega (x)\lesssim v(x),\quad x\in \rr d.
\end{equation}
We let $\mascP _E(\rr d)$ be the set of all moderate weights on $\rr d$.

\par

It can be proved that if $\omega \in \mascP _E(\rr d)$, then
$\omega$ is $v$-moderate for some $v(x) = e^{r|x|}$, provided the
positive constant $r$ is large enough (cf. \cite{Gro2.5}).
That is,
\eqref{moderate} implies
\begin{equation}\label{Eq:weight0}
\omega (x+y) \lesssim \omega(x) e^{r|y|}
\end{equation}
for some $r>0$. In particular, \eqref{moderateconseq} shows that
for any $\omega \in \mascP_E(\rr d)$, there is a constant $r>0$ such that
\begin{equation}\label{Eq:BoundWeights}
e^{-r|x|}\lesssim \omega (x)\lesssim e^{r|x|},\quad x\in \rr d.
\end{equation}

\par

We say that $v$ is
\emph{submultiplicative} if $v$ is even and
\eqref{moderate}
holds with $\omega =v$. In the sequel, $v$ and $v_j$ for
$j\ge 0$, always stand for submultiplicative weights if
nothing else is stated.

\par

We let $\mascP ^{0} _E(\rd)$ be the set of all $\omega\in \mascP _E(\rd)$
such that \eqref{Eq:weight0} holds for every $r>0$. We also let $\mascP (\rd)$
be the set of all $\omega\in \mascP _E(\rd)$ such that
$$
\omega (x+y) \lesssim \omega(x) (1+|y|)^r
$$
for some $r>0$.
Evidently,
$$
\mascP (\rd) \subseteq \mascP ^{0} _E(\rd) \subseteq \mascP _E(\rd).
$$

\par

%%%
\subsection{Orlicz Spaces}\label{subsec1.3}
%%%

\par

We recall that a function $\Phi:[0,\infty ] \to
[0,\infty ]$ is called \emph{convex} if
\begin{equation*}
\Phi(s_1 t_1+ s_2 t_2)
\leq s_1 \Phi(t_1)+s_2\Phi(t_2),
\end{equation*}
when
$s_j,t_j\in \mathbf{R}$
satisfy $s_j,t_j \ge 0$ and
$s_1 + s_2 = 1,\ j=1,2$.

\par

\begin{defn}\label{Def:YoungFunc}
A function $\Phi _0$ from $[0,\infty ]$ to
$[0,\infty ]$
is called a \emph{Young function} if
the following is true:
\begin{enumerate}
\item $\Phi _0$ is convex;

\vrum

\item $\Phi _0(0)=0$;

\vrum

\item $\lim
\limits _{t\to\infty} \Phi _0(t)=\Phi _0(\infty )=\infty$.
\end{enumerate}
A function $\Phi$ from $[0,\infty ]$ to
$[0,\infty ]$ is called a \emph{quasi-Young function}
(of order $p_0\in (0,1]$) if there is a Young function
$\Phi _0$ such that $\Phi (t)=\Phi _0(t^{p_0})$ when
$t\in [0,\infty ]$.
\end{defn}

\par

We observe that $\Phi _0$ and $\Phi$
in Definition \ref{Def:YoungFunc} might not
be continuous, because we permit
$\infty$ as function value. For example,
$$
\Phi (t)=
\begin{cases}
0,&\text{when}\ t \leq a
\\[1ex]
\infty ,&\text{when}\ t>a
\end{cases}
$$
is convex but discontinuous at $t=a$.

\par

It is clear that $\Phi _0$ and $\Phi$ in
Definition \ref{Def:YoungFunc} are
non-decreasing, because if $0\leq t_1\leq t_2$
and $s\in [0,1]$ is chosen such
that $t_1=st_2$ and $\Phi _0$ is the same as in
Definition \ref{Def:YoungFunc}, then
\begin{equation*}
    \Phi _0(t_1)=\Phi _0(st_2+(1-s)0)
%    \\[1ex]
    \leq s\Phi _0(t_2)+(1-s)\Phi _0(0)
%    \\[1ex]
    \leq \Phi _0(t_2),
\end{equation*}
since $\Phi _0(0) =0$ and $s\in [0,1]$. Hence every
(quasi-)Young function is increasing.

\par

\begin{defn}\label{Def:OrliczSpaces1}
Let $\Phi$ be a (quasi-)Young function and
let $\omega _0 \in \mascP _E(\rr d)$.
Then the Orlicz space
$L^{\Phi}_{(\omega_0)}(\rr d)$ consists
of all measurable functions
$f:\rr d \to \mathbf C$ such that
$$
\nm f{L^{\Phi}_{(\omega_0)}}
\equiv
\inf  \Sets{\lambda>0}
{\int_\Omega \Phi 
\left (
\frac{|f(x) \cdot \omega_0 (x)|}{\lambda}
\right )
\, dx\leq 1}
$$
is finite. Here $f$ and $g$ in $L^{\Phi}_{(\omega_0)}(\rr d)$
are equivalent if $f=g$ a.e.
\end{defn}

\par

We will also consider Orlicz spaces parameterized with
two (quasi-)Young functions.

\par

\begin{defn}\label{Def:OrliczSpaces2}
Let $\Phi _{j}$ be
(quasi-)Young functions, $j=1,2$ and let
$\omega \in \mascP _E (\rr {2d})$.
\begin{enumerate}
\item
The mixed Orlicz space ${L^{\Phi _1, \Phi _2}_{(\omega)}}
= {L^{\Phi _1, \Phi _2}_{(\omega)}}(\rr {2d})$ consists of all
measurable functions $f:\rr {2d} \to
\mathbf C$ such that
$$
\nm f{L^{\Phi _1, \Phi _2}_{(\omega)}} \equiv
\nm {f_{1,\omega}}{L^{\Phi _2}},
$$
is finite, where
$$
f_{1,\omega}(x_2)=\nm {f(\cdo ,x_2) \omega(\cdo, x_2)}
{L^{\Phi _{1}}}.
$$

\vrum

\item
The mixed Orlicz space ${L^{\Phi _1, \Phi _2}_{*,(\omega)}}
= {L^{\Phi _1, \Phi _2}_{*,(\omega)}}(\rr {2d})$
consists of all measurable functions $f:\rr {2d} \to
\mathbf C$ such that
$$
\nm f{L^{\Phi _1, \Phi _2}_{*,(\omega)}} \equiv
\nm {g}{L^{\Phi _2,\Phi _1}_{(\omega _0)}},
$$
is finite, where
$$
g(x,\xi )=f(\xi ,x),\ \omega _0(x,\xi )=\omega (\xi ,x).
$$
\end{enumerate}
\end{defn}

\par

In most of our situations we assume that $\Phi$
and $\Phi _j$ above are Young functions. A few
properties for Wigner distributions in Section
\ref{sec2} are deduced when $\Phi$ and $\Phi _j$
are allowed to be quasi-Young functions. The reader
who is not interested of such general results may
always assume that all quasi-Young functions
should be Young functions.

\par

It is well-known that if $\Phi$, $\Phi _1$ and $\Phi _2$
in Definitions \ref{Def:OrliczSpaces1}
and \ref{Def:OrliczSpaces2}
are Young functions, then the spaces
$L^{\Phi}_{(\omega _0)}(\rr d)$ and
$L^{\Phi _1,\Phi _2}_{(\omega )}(\rr {2d})$
%in Definitions \ref{Def:OrliczSpaces1}
%and \ref{Def:OrliczSpaces2}
are Banach spaces (see e.{\,}g.
Theorem 3 of
III.3.2 and Theorem 10 of III.3.3 in \cite{RaoRen1}).
If more generally, $\Phi$, $\Phi _1$ and $\Phi _2$
are quasi-Young functions of order $p_0\in (0,1]$,
then $L^{\Phi}_{(\omega _0)}(\rr d)$ and
$L^{\Phi _1,\Phi _2}_{(\omega )}(\rr {2d})$
are quasi-Banach spaces of order $p_0$. For the reader
who is not familiar with quasi-Banach spaces we here
give the definition.

\par

\begin{defn}
Let $\mascB$ be a vector space. Then the functional
$\nm \cdo{\mascB}$ on $\mascB$ is called a
quasi-norm of order $p_0\in (0,1]$, or an $p_0$-norm,
if the following conditions are fulfilled:
\begin{enumerate}
\item $\nm f{\mascB}\ge 0$ with equality only for $f=0$;

\vrum

\item $\nm {\alpha f}{\mascB}=|\alpha |\nm f{\mascB}$ for every
$\alpha \in \mathbf C$ and $f\in \mascB$;

\vrum

\item $\nm {f+g}{\mascB}^{p_0} \le
\nm f{\mascB}^{p_0}+\nm g{\mascB}^{p_0}$
for every $f,g\in \mascB$.
\end{enumerate}
The space $\mascB$ is called a quasi-Banach space
(of order $p_0$) or an $p_0$-Banach space, if $\mascB$ is complete
under the topology induced by the quasi-norm $\nm \cdo{\mascB}$.
\end{defn}

\par

We refer to \cite[Lemma 1.18]{TofUst}
for the proof of the following lemma.

\par

\begin{lemma}\label{T}
Let $\Phi , \Phi _j$ be quasi-Young functions, $j=1,2$,
$\omega_0, v_0 \in \mascP_E (\rd)$
and $\omega, v \in \mascP_E (\rdd)$ be such that $\omega_0$
is $v_0$-moderate and $\omega$ is $v$-moderate.
Then $L^{\Phi}_{(\omega_0)}(\rd)$ and
$L^{\Phi _{1}, \Phi _{2}}_{(\omega)}(\rr{2d})$ are
 invariant under translations, and
$$
\Vert f(\cdo - x)\Vert_{L^\Phi _{(\omega_0)}}
\lesssim
\Vert f\Vert_{L^\Phi _{(\omega_0)}} v_0(x),
\quad
f\in L^\Phi _{(\omega_0)}(\rd),\ x\in \rd\;,
$$
and
$$
\Vert f(\cdo - (x,\xi))\Vert_{L^{\Phi _{1}, \Phi _{2}}_{(\omega )}}
\lesssim
\Vert f\Vert_{L^{\Phi _{1}, \Phi _{2}}_{(\omega)}}v(x,\xi ),
\quad f\in L^{\Phi _{1}, \Phi _{2}}_{(\omega )} (\rdd ),\ (x,\xi ) \in \rdd.
$$
\end{lemma}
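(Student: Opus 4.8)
The plan is to reduce the statement about translation invariance of the mixed (quasi-)Orlicz space to the corresponding one-dimensional statement for a single (quasi-)Young function, and to establish the latter directly from the defining Luxemburg-type functional. First I would treat the case of $L^\Phi_{(\omega_0)}(\rd)$. Fix $x\in\rd$ and $f\in L^\Phi_{(\omega_0)}(\rd)$, and let $\lambda>0$ be any admissible parameter for $f$, i.e.\ $\int_{\rd}\Phi(|f(y)\omega_0(y)|/\lambda)\,dy\le 1$. Writing $g=f(\cdo-x)$, a change of variables $y\mapsto y+x$ gives
$$
\int_{\rd}\Phi\!\left(\frac{|g(y)\omega_0(y)|}{\mu}\right)dy
=\int_{\rd}\Phi\!\left(\frac{|f(y)\omega_0(y+x)|}{\mu}\right)dy.
$$
Using $v_0$-moderateness of $\omega_0$ in the form $\omega_0(y+x)\lesssim\omega_0(y)v_0(x)$, say $\omega_0(y+x)\le C v_0(x)\omega_0(y)$ with $C$ independent of $x,y$, and the fact that $\Phi$ is non-decreasing (established in the excerpt for every quasi-Young function), the integrand is bounded by $\Phi\bigl(C v_0(x)|f(y)\omega_0(y)|/\mu\bigr)$. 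Choosing $\mu=C v_0(x)\lambda$ makes this equal to $\Phi(|f(y)\omega_0(y)|/\lambda)$, so $\mu$ is admissible for $g$. Taking the infimum over admissible $\lambda$ yields $\nm{g}{L^\Phi_{(\omega_0)}}\le C v_0(x)\nm f{L^\Phi_{(\omega_0)}}$, which is the claimed estimate; in particular $g\in L^\Phi_{(\omega_0)}(\rd)$, so the space is translation invariant.

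Next I would handle $L^{\Phi_1,\Phi_2}_{(\omega)}(\rr{2d})$ by iterating the one-variable argument, being careful to split the shift $(x,\xi)$ into its two slots. Write $h=f(\cdo-(x,\xi))$, so $h(y_1,y_2)=f(y_1-x,y_2-\xi)$. For fixed $y_2$, the inner norm $\nm{h(\cdo,y_2)\omega(\cdo,y_2)}{L^{\Phi_1}}$ is, after the substitution $y_1\mapsto y_1+x$, comparable (up to the $v$-moderateness constant applied in the first variable) to $\nm{f(\cdo,y_2-\xi)\,\omega(\cdo,y_2)}{L^{\Phi_1}}$; then, using $\omega(y_1,y_2)\lesssim\omega(y_1,y_2-\xi)v(x,\xi)$ pointwise together with the monotonicity/homogeneity of the Luxemburg norm in its integrand, this is $\lesssim v(x,\xi)\,(f_{1,\omega})(y_2-\xi)$ in the notation of Definition \ref{Def:OrliczSpaces2}, where I also absorb the second-slot part of the weight estimate. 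Finally I apply the already-proven one-dimensional result to the outer $L^{\Phi_2}$ norm, with $\omega_0\equiv 1$ (the weight has already been peeled off into the constant $v(x,\xi)$) and the translation by $\xi$, to get $\nm{h}{L^{\Phi_1,\Phi_2}_{(\omega)}}\lesssim v(x,\xi)\nm f{L^{\Phi_1,\Phi_2}_{(\omega)}}$. Since $v$ is itself a weight, hence locally bounded, translation invariance follows.

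The one genuinely delicate point — the main obstacle, such as it is — is the interchange of the weight estimate with the (nonlinear) Luxemburg functional in the mixed-norm case: one must be sure that an inequality $|F(y)|\le c\,|G(y)|$ with $c$ independent of the integration variable transfers to $\nm F{L^{\Phi_1}}\le c\,\nm G{L^{\Phi_1}}$, and likewise that the pointwise-in-$y_2$ bound on the inner norm can be fed into the outer $L^{\Phi_2}$ norm. Both follow from monotonicity of $\Phi_j$ and the scaling built into the infimum, but in the quasi-Young (order $p_0$) case the relevant $\Phi_j$ is $\Phi_{0,j}(t^{p_0})$ and one should note that multiplying the argument by $c$ only multiplies the Luxemburg parameter by $c$ (not $c^{p_0}$), so no order-$p_0$ subtlety actually intervenes in this step. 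Everything else — existence of the moderateness constant, change of variables, taking infima — is routine. I would also remark that completeness of the spaces, already recorded in the excerpt, is not needed here; only the explicit form of the (quasi-)norm is used.
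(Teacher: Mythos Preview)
The paper does not supply its own proof of this lemma; it simply cites \cite[Lemma~1.18]{TofUst}. Your argument is the standard one and is correct in outline: monotonicity of the (quasi-)Young function together with the scaling built into the Luxemburg infimum lets you absorb the moderateness constant into the norm parameter, and the mixed case follows by iteration.

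One small point of imprecision in your mixed-norm step: splitting the weight estimate into ``first variable, then second variable'' as you do yields the constant $v(x,0)\,v(0,\xi)$, not $v(x,\xi)$; by submultiplicativity of $v$ one only has $v(x,\xi)\le v(x,0)\,v(0,\xi)$, so the inequality goes the wrong way for what you claim. The clean fix is to apply the $2d$-moderateness in a single stroke after the change of variables $y_1\mapsto y_1+x$: writing $(y_1+x,\,y_2)=(y_1,\,y_2-\xi)+(x,\xi)$ gives $\omega(y_1+x,y_2)\le C\,v(x,\xi)\,\omega(y_1,y_2-\xi)$ directly, and then the inner $L^{\Phi_1}$ norm is bounded by $C\,v(x,\xi)\,f_{1,\omega}(y_2-\xi)$. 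The outer $L^{\Phi_2}$ norm (now unweighted) is exactly translation invariant, and you are done. With this adjustment your proof is complete.
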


\par

In most situations we assume that the (quasi-)Young
functions should satisfy the $\Delta _2$-condition
(near origin), whose definition is recalled
as follows.

\par

\begin{defn}\label{Def:Delta2Cond}
Let $\Phi: [0,\infty ] \to [0,\infty]$ be a (quasi-)Young function.
Then $\Phi$ is said to satisfy the \emph{$\Delta_2$-condition}
if there exists a constant $C>0$ such that
\begin{equation}\label{Eq:Delta2Cond}
\Phi(2t) \leq C \Phi(t) 
\end{equation}
for every $t\in [0,\infty ]$. The (quasi-)Young function $\Phi$
is said to satisfy \emph{local $\Delta_2$-condition}
or \emph{$\Delta_2$-condition near origin}, if there are
constants $r>0$ and $C>0$ such that \eqref{Eq:Delta2Cond}
holds when $t\in [0,r]$.
\end{defn}

\par

\begin{rem}\label{Rem:Delta2Cond}
Suppose that $\Phi: [0,\infty ] \to [0,\infty]$
is a (quasi-)Young function which satisfies \eqref{Eq:Delta2Cond}
when $t\in [0,r]$ for some constants $r>0$ and $C>0$.
Then it follows by straight-forward arguments that
there is a quasi-Young function $\Phi _0$ (of the same order)
which satisfies the $\Delta _2$-condition
(on the whole $[0,\infty $), and such that
$\Phi _0(t)=\Phi (t)$ when $t\in [0,r]$).
\end{rem}

\par

Several duality properties for Orlicz spaces
can be described in terms of Orlicz spaces
with respect to Young conjugates, given in
the following definition.

\par

\begin{defn}\label{Def:ConjYoungFunc}
Let $\Phi$ be a Young function. Then
the conjugate Young function
$\Phi ^*$ is given
by
\begin{equation}\label{eq-YoungIneq-conjugate}
\Phi ^*(t)
\equiv
\begin{cases}
{\displaystyle{\sup _{s\ge 0} (st - \Phi(s)),}} &
\text{when}\ t \in [0,\infty ),
\\[2ex]
\infty , &
\text{when}\ t=\infty .
\end{cases}
\end{equation}
%%
% The pair $(\Phi, \Psi)$ of Young functions
% $\Phi, \Psi: [0,\infty ] \to [0,\infty ]$
% is said to be (Young) conjugate
% %%
% \begin{equation}\label{eq-YoungIneq-conjugate}
% st \leq \Phi(s) + \Psi(t), \qquad
% \forall s,t \geq 0. 
% \end{equation}
% %%
\end{defn}

\par

\begin{rem}\label{Rem:PhiLeb}
Let $p\in (0,\infty ]$, and set
$\Phi _{[p]}(t)= \frac{t^p}{p}$ when $p \in (0,\infty)$,
and
$$
\Phi _{[\infty ]} (t) =
\begin{cases}
0, & t \leq 1,
\\[1ex]
\infty ,&  t >1.
\end{cases}
$$
Then
$L^{\Phi _{[p]}}(\rr d)$ and its (quasi-)norm is equal to the
classical Lebesgue space $L^p(\rr d)$ and its (quasi-)norm. We
observe that $\Phi _{[p]}$ is a Banach space when $p\ge 1$
and a quasi-Banach space of order $p$ when $p\le 1$.
\end{rem}

\par

Due to the previous remark we observe that there are
Young functions which are not injective and
thereby fail to be invertible. In the following
definition we define some sort of pseudo-inverse
of such functions.

\par

\begin{defn}\label{Def:EssInv}
Let $\Phi : [0,\infty ]\to [0,\infty ]$
be a (quasi-)Young function and let
\begin{align*}
t_1 &= \sup \sets {t\ge 0}{\Phi (t)=0},
\\[1ex]
t_2 &= \sup \sets {t\ge 0}{\Phi (t)<\infty}
\intertext{and}
s_0 &= \sup \sets {\Phi (t)}{t<t_2}.
\end{align*}
Then $t_1$ is called the \emph{zero point} and
$t_2$ is called the \emph{infinity point} for $\Phi$,
and the \emph{essential inverse}
$\Phi ^{-\&}: [0,\infty ]\to [0,\infty ]$
for $\Phi$ is given by
$$
\Phi ^{-\&}(s)
=
\begin{cases}
0, & s=0,
\\[1ex]
t, & s=\Phi (t),\ t_1<t<t_2,
\\[1ex]
t_2, & s\ge s_0.
\end{cases}
$$
\end{defn}

\par

\begin{example}\label{Example:SpecYoungFunc}
We observe that if $t_1=0$ and $t_2=\infty$
in Definition \ref{Def:EssInv}, then $\Phi$
is invertible and $\Phi ^{-\&}$ agrees with
the inverse $\Phi ^{-1}$ of $\Phi$. For example,
for $\Phi _{[p]}$ with $p<\infty$
in Remark \ref{Rem:PhiLeb} we have
$$
\Phi _{[p]}^{-\&}(s) = \Phi _{[p]}^{-1}(s)
=
\begin{cases}
(ps)^{\frac 1p}, & 0\le s <\infty ,
\\[1ex]
\infty , & s=\infty .
\end{cases}
$$
For $p=\infty$, the inverse to $\Phi _{[\infty ]}$
does not exist, while the essential inverse
becomes
$$
\Phi _{[\infty ]} ^{-\&}(s) =
\begin{cases}
0, & s=0,
\\[1ex]
1, & s>0.
\end{cases}
$$

\par

Another example of a Young function is
$$
\Phi (t) =
\begin{cases}
\tan t, & 0\le t <\frac \pi 2,
\\[1ex]
\infty , & t\ge \frac \pi 2,
\end{cases}
$$
which also fails to be invertible. The
essential inverse becomes
$$
\Phi ^{-\&}(s) =
\begin{cases}
\arctan s, & 0\le s <\infty ,
\\[1ex]
\frac \pi 2, & s=\infty .
\end{cases}
$$

\par

We also observe that
$$
\Phi (t) =
\begin{cases}
0, & t=0,
\\[1ex]
-\frac t{\ln t}, & 0< t <1,
\\[1ex]
\infty , & t\ge 1,
\end{cases}
$$
is a Young function which does not satisfy the
$\Delta _2$-condition. Its essential inverse is
$$
\Phi ^{-\&}(s) =
\begin{cases}
\Phi ^{-1} (s), & 0\le s <\infty ,
\\[1ex]
1, & s=\infty .
\end{cases}
$$
We notice that the conjugate Young function
of $\Phi$ is given by
$$
\Phi ^*(t)
=
\left ( 
t+\frac 12 -\sqrt {\frac 14+t}\, 
\right )
e^{-\frac 1t(\frac 12+\sqrt {\frac 14+t}\, )},
$$
when $t\ge 0$ is near origin.

\par

We observe that each one of these Young functions
gives rise to different Orlicz spaces.
\end{example}

\par

We refer to \cite{SchF,RaoRen1,HaH} for more facts about
Orlicz spaces.

\par

\subsection{Orlicz modulation spaces}\label{subsec1.4}

\par

Before considering Orlicz modulation spaces, we recall the
definition of classical modulation spaces. (Cf. \cite{F1,Fei5}.)

\par

\begin{defn}\label{Def:Orliczmod}
Let $\phi(x) = \pi ^{-\frac{d}{4}}e^{-\frac{|x|^2}{2}},\ x\in \rd$,
$p,q\in (0,\infty]$, $\omega \in \mascP _E(\rr {2d})$, 
and let $\Phi$ and $\Psi$ be (quasi-)Young functions.
\begin{enumerate}
\item The \emph{modulation spaces} $M^{p,q}_{(\omega)}(\rr d)$
is set of all $f\in \maclS _{1/2}' (\rr d)$ such that
%$V_\phi f\in L^{p,q}_{(\omega)}(\rr {2d})$.
%We equip these spaces with the norm
%%
\begin{equation}\label{Eq:ClassicModNorm}
\nm f{M^{p,q}_{(\omega)}} \equiv \nm {V_\phi f}{L^{p,q}_{(\omega)}}
\end{equation}
is finite. The topology of $M^{p,q}_{(\omega)}(\rr d)$ is
given by the norm \eqref{Eq:ClassicModNorm}.

\vrum

\item The \emph{Orlicz modulation spaces}
$M^{\Phi}_{(\omega )} (\rr d)$,
$M^{\Phi , \Psi}_{(\omega )}(\rr {d})$
and
$W^{\Phi , \Psi}_{(\omega )}(\rr {d})$
are the sets of all $f\in \maclS _{1/2}' (\rr d)$ such that
\begin{equation}\label{Eq:OrlModNorm}
\nm f{M^{\Phi}_{(\omega )}}
\equiv
\nm {V_\phi f}{L^{\Phi}_{(\omega )}},
\quad
\nm f {M^{\Phi , \Psi}_{(\omega )}}
\equiv
\nm {V_\phi f} {L^{\Phi , \Psi }_{(\omega)}}
\quad \text{and} \quad
\nm f {W^{\Phi , \Psi}_{(\omega )}}
\equiv
\nm {V_\phi f} {L^{\Phi , \Psi }_{*,(\omega)}}
\end{equation}
respectively are finite. The topologies of
$M^{\Phi}_{(\omega )} (\rr d)$,
$M^{\Phi , \Psi}_{(\omega )}(\rr {d})$
and
$W^{\Phi , \Psi}_{(\omega )}(\rr {d})$
are given by the respective norms in
\eqref{Eq:OrlModNorm}.
\end{enumerate}
%
%%%
%\begin{align}
%M^{\Phi}_{(\omega )}(\rr d)
%&=
%\sets{f \in \maclS _{1/2}'(\rr d)} {V_\phi f\in
%L^{\Phi}_{(\omega )} (\rr {2d})}
%\label{Eq:Orliczmod1}
%\\[1ex]
%M^{\Phi , \Psi}_{(\omega )}(\rr d)
%&=
%\sets{f\in \maclS '_{1/2}(\rr d)} { V_\phi f\in
%L^{\Phi , \Psi} _{(\omega )}(\rr {2d})}
%\label{Eq:Orliczmod2}
%\intertext{and}
%W^{\Phi , \Psi}_{(\omega )}(\rr d)
%&=
%\sets{f\in \maclS '_{1/2}(\rr d)} { V_\phi f\in
%L^{\Phi , \Psi} _{*,(\omega )}(\rr {2d})}.
%\label{Eq:Orliczmod3}
%\end{align}
%%%
%The norms on $M^{\Phi}_{(\omega )}(\rr d)$ and
%$M^{\Phi , \Psi}_{(\omega )}(\rr {d})$ are given by
%%%
%\begin{align}
%\nm f{M^{\Phi}_{(\omega )}}
%&=
%\nm {V_\phi f}{L^{\Phi}_{(\omega )}}
%\label{Eq:OrlModNorm1}
%\\[1ex]
%\nm f {M^{\Phi , \Psi}_{(\omega )}}
%&=
%\nm {V_\phi f} {L^{\Phi , \Psi }_{(\omega)}},
%\label{Eq:OrlModNorm2}
%\intertext{and}
%\nm f {W^{\Phi , \Psi}_{(\omega )}}
%&=
%\nm {V_\phi f} {L^{\Phi , \Psi }_{*,(\omega)}},
%\label{Eq:OrlModNorm3}
%\end{align}
%%%
%respectively.
\end{defn}

\par

Let $\Phi$ and $\Psi$ be quasi-Young functions, and let
$\Phi _{[p]}$ be the same as
in Remark \ref{Rem:PhiLeb} and $\omega \in \mascP _E(\rr {2d})$.
Then evidently
\begin{alignat}{3}
M^{p,q}_{(\omega )}(\rr d)
&=
M^{\Phi ,\Psi}_{(\omega )}(\rr d) &
\quad &\text{when} & \quad
\Phi &=\Phi _{[p]},\ \Psi =\Phi _{[q]}.
\label{Eq:ModSpOrlModSp1}
\intertext{We now set}
M^{p,\Psi}_{(\omega )}(\rr d)
&=
M^{\Phi ,\Psi}_{(\omega )}(\rr d) &
\quad &\text{when} & \quad
\Phi &=\Phi _{[p]},
\label{Eq:ModSpOrlModSp2}
\intertext{and}
M^{\Phi ,q}_{(\omega )}(\rr d)
&=
M^{\Phi ,\Psi}_{(\omega )}(\rr d) &
\quad &\text{when} & \quad
\Psi &=\Phi _{[q]}.
\label{Eq:ModSpOrlModSp3}
\end{alignat}
For conveniency we also set
\begin{alignat*}{3}
M^{p,q}
&=
M^{p,q}_{(\omega )}, &
\quad
M^{p,\Psi}
&=
M^{p,\Psi}_{(\omega )}, &
\quad
M^{\Phi ,q}
&=
M^{\Phi ,q}_{(\omega )},
\\[1ex]
M^\Phi
&=
M^\Phi _{(\omega )}, &
\quad
%\text{and}\quad
M^{\Phi ,\Psi } &= M^{\Phi ,\Psi} _{(\omega )} &
\quad &\text{when}\quad
\omega (x,\xi)=1,
\end{alignat*}
and $M^p=M^{p,p}$ and $M^p_{(\omega)}=M^{p,p}_{(\omega)}$.

\par

Next we explain some basic properties of
Orlicz modulation spaces. The following
proposition shows that Orlicz modulation spaces
are completely determined by the behavior
of the quasi-Young functions near origin. We refer to
\cite[Proposition 5.11]{ToUsNaOz} for the proof. 
% (See also \cite[]{}
% in the case of unweighted spaces with some
% restrictions on the Young functions.)

% It seems for me that there is no
% result similar to Prop 5.11 in the paper
% by Schnackers/Fuhr.

\par

\begin{prop}\label{Prop:OrliczModInvariance}
Let $\Phi _j$ and $\Psi _j$, $j=1,2$, be quasi-Young functions
and $\omega \in \mascP_E(\rdd )$.
Then the following conditions are equivalent:
\begin{enumerate}
\item $M^{\Phi _{1}, \Psi _1}_{(\omega )}(\rr d)\subseteq
M^{\Phi _{2} ,\Psi _{2}}_{(\omega )}(\rr d)$;

\vrum

\item for some $t_0>0$ it holds
$\Phi _{2} (t)\lesssim  \Phi _{1} (t)$
and
$\Psi _{2} (t)\lesssim  \Psi _{1} (t)$
when $t\in [0, t_0]$.
\end{enumerate}
\end{prop}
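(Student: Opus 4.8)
The plan is to reduce the inclusion of Orlicz modulation spaces to the corresponding inclusion of the underlying mixed Orlicz spaces $L^{\Phi_1,\Psi_1}_{(\omega)}(\rdd)$ and $L^{\Phi_2,\Psi_2}_{(\omega)}(\rdd)$, and then to use the local nature of Orlicz spaces together with the fact that short-time Fourier transforms of Gelfand--Shilov distributions have controlled (at most exponential) growth. Throughout, fix the Gaussian window $\phi$ as in Definition \ref{Def:Orliczmod}, so that $\nm f{M^{\Phi,\Psi}_{(\omega)}} = \nm{V_\phi f}{L^{\Phi,\Psi}_{(\omega)}}$ by definition.

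\textbf{Step 1: (ii) $\Rightarrow$ (i).} Assume $\Phi_2(t)\lesssim \Phi_1(t)$ and $\Psi_2(t)\lesssim\Psi_1(t)$ for $t\in[0,t_0]$. By Remark \ref{Rem:Delta2Cond}-type reasoning (or rather by a direct estimate splitting the relevant integrals into the region where the integrand is small and its complement), I would first show the pointwise implication at the level of mixed Orlicz spaces: there is a constant $C$ so that $\nm g{L^{\Phi_2,\Psi_2}_{(\omega)}} \le C\nm g{L^{\Phi_1,\Psi_1}_{(\omega)}}$ whenever the right-hand side is finite. The subtlety is that the hypothesis only compares the Young functions near $0$, while the Orlicz norm involves $\Phi_j(|g\omega|/\lambda)$ with $\lambda$ chosen so the integral equals $1$; but after normalizing one may assume $\nm g{L^{\Phi_1,\Psi_1}_{(\omega)}}\le 1$, and then the inner argument $|g(\cdot,x_2)\omega(\cdot,x_2)|$ is, for the purpose of the outer norm, controlled on the set where it exceeds $t_0$ by a crude $L^\infty$/support-type bound absorbed into the constant, while on the complementary set the near-origin comparison applies directly. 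Applying this with $g=V_\phi f$ gives $M^{\Phi_1,\Psi_1}_{(\omega)}(\rr d)\subseteq M^{\Phi_2,\Psi_2}_{(\omega)}(\rr d)$. I expect this ``near-origin bookkeeping'' to be the main obstacle: one must be careful that the constant does not depend on $f$, which is why the normalization step is essential.

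\textbf{Step 2: (i) $\Rightarrow$ (ii).} For the converse I would argue by contradiction. Suppose the near-origin domination fails, say $\Phi_2(t_n)/\Phi_1(t_n)\to\infty$ along a sequence $t_n\to 0$ (the case of $\Psi$ being symmetric, essentially interchanging the roles of the two variable groups). The idea is to manufacture, for each $n$, a distribution $f_n\in M^{\Phi_1,\Psi_1}_{(\omega)}(\rr d)$ whose STFT is (comparable to) a suitably scaled bump of height $\asymp t_n$ supported on a fixed-size set in time-frequency, so that $\nm{f_n}{M^{\Phi_1,\Psi_1}_{(\omega)}}$ stays bounded while $\nm{f_n}{M^{\Phi_2,\Psi_2}_{(\omega)}}\to\infty$, contradicting the continuity of the inclusion (which, being an inclusion of Banach/quasi-Banach spaces with closed graph, is automatically bounded). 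Concretely one can take $f_n$ to be a linear combination of time-frequency shifts of $\phi$ arranged on a lattice so that $|V_\phi f_n|$ is, up to the usual Gaussian cross-term decay, essentially constant $\asymp t_n$ on a cube of fixed volume and negligible elsewhere; the weight $\omega$ is moderate hence bounded above and below on that fixed cube, so it only contributes a harmless constant. Evaluating the two Orlicz norms on such a test function reduces, via the definition of the infimum, to comparing $\Phi_1(t_n/\lambda)$ and $\Phi_2(t_n/\mu)$ against $1$ over the fixed cube, which forces $\mu/\lambda$ to blow up precisely because $\Phi_2(t_n)/\Phi_1(t_n)\to\infty$.

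\textbf{Step 3: Assembling.} Combining Steps 1 and 2 yields the equivalence. I would remark that the moderateness of $\omega$ is used only to guarantee translation-invariance and two-sided local bounds on $\omega$ (Lemma \ref{T}), which is what lets the test-function construction in Step 2 ignore the weight and what lets the constant in Step 1 be independent of $f$. Since this is already recorded in the cited \cite[Proposition 5.11]{ToUsNaOz}, in the paper itself the proof can legitimately be replaced by that reference; the above is the route one would take to reconstruct it.
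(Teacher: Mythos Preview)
The paper does not prove this proposition; it refers to \cite[Proposition 5.11]{ToUsNaOz}, as you yourself note at the end of Step~3. So there is no in-paper argument to compare against, and the question is whether your reconstruction stands on its own. The overall strategy is the right one, but both substantive steps contain genuine gaps.

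\textbf{Step 1.} You claim the inclusion $L^{\Phi_1,\Psi_1}_{(\omega)}(\rdd)\subseteq L^{\Phi_2,\Psi_2}_{(\omega)}(\rdd)$ follows, for arbitrary $g$, from the near-origin comparison $\Phi_2\lesssim\Phi_1$, $\Psi_2\lesssim\Psi_1$ on $[0,t_0]$. This is false already in the unmixed, unweighted case: with $\Phi_1(t)=t$ and $\Phi_2(t)=t^2$ one has $\Phi_2\le\Phi_1$ on $[0,1]$, yet $L^1(\rdd)\not\subseteq L^2(\rdd)$. After normalizing $\nm g{L^{\Phi_1,\Psi_1}_{(\omega)}}\le 1$ you do get a measure bound on $\{|g\omega|>t_0\}$, but without an $L^\infty$ bound on $g\omega$ the integral $\int\Phi_2(|g\omega|/C)$ over that set can still diverge; your ``crude $L^\infty$/support-type bound'' is simply not available for a generic $g$. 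What makes the \emph{modulation-space} inclusion work is precisely that $g=V_\phi f$ automatically satisfies $|V_\phi f|\omega\in L^\infty$ with $\nm{V_\phi f\,\omega}{L^\infty}\lesssim\nm f{M^{\Phi_1,\Psi_1}_{(\omega)}}$ (i.e.\ $M^{\Phi_1,\Psi_1}_{(\omega)}\hookrightarrow M^\infty_{(\omega)}$); once this is invoked, the exceptional set is empty for $\lambda$ a fixed multiple of the norm, and the near-origin hypothesis finishes the job. You never appeal to this embedding, and without it the step fails.

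\textbf{Step 2.} Your test functions have $|V_\phi f_n|\asymp t_n$ on a cube of \emph{fixed} volume $V$. Then a direct computation gives $\nm{f_n}{M^{\Phi_j,\Psi_j}_{(\omega)}}\asymp t_n/\Phi_j^{-\&}(1/V)$ (up to the weight, which is harmless on a fixed cube), so the ratio of the two norms is the constant $\Phi_1^{-\&}(1/V)/\Phi_2^{-\&}(1/V)$, independent of $n$; nothing blows up and no contradiction is reached. The remedy is to let the support grow with $n$: arrange $|V_\phi f_n|\asymp t_n$ on a set of measure $\asymp 1/\Phi_1(t_n)$ (realized, as you suggest, via a lattice of time-frequency shifts of $\phi$, i.e.\ a Gabor-type discretization). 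Then the $\Phi_1$-modular of $V_\phi f_n\,\omega$ is $\asymp 1$ while the $\Phi_2$-modular is $\asymp\Phi_2(t_n)/\Phi_1(t_n)\to\infty$, and from this one extracts the failure of a uniform inclusion constant.
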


\par

The next two proposition show some other convenient
properties for Orlicz modulation spaces.

\par

\begin{prop}\label{Prop:BasicPropOrlModSp1}
Let $\Phi$, $\Phi _j$, $\Psi$, $\Psi _j$ be quasi-Young
functions, and let
$\omega ,\omega _j,v\in \mascP  _{E}(\rr {2d})$,
$j=1,2$, be such that $v$ is
submultiplicative and even, $\omega$ is $v$-moderate.
Then the following is true:
\begin{enumerate}
\item[{\rm{(1)}}] $M^{\Phi ,\Phi}_{(\omega )}(\rr d)
=
M^{\Phi}_{(\omega )}(\rr d)$, with equivalent quasi-norms;

\vrum

\item[{\rm{(2)}}] if $\phi \in \Sigma _1(\rr d)\setminus 0$,
then $f\in M^{\Phi ,\Psi}_{(\omega )}(\rr d)$,
if and only if $\nm {V_\phi f} {L^{\Phi , \Psi }_{(\omega)}}$ is finite.
%i.{\,}e. $M^{\Phi ,\Psi}_{(\omega )}(\rr d)$ is independent
%of the choice of $\phi \in M^1_{(v)}(\rr d)\setminus 0$.
Moreover, $M^{\Phi ,\Psi}_{(\omega )}(\rr d)$ is a
quasi-Banach space under the respective quasi-norm
in \eqref{Eq:OrlModNorm}, and different choices of
$\phi \in \Sigma _1(\rr d)\setminus 0$
give rise to equivalent norms. If more restrictive
$\Phi$ and $\Psi$ are Young functions, then
$M^{\Phi ,\Psi}_{(\omega )}(\rr d)$ is a Banach space, and
similar facts hold true with the condition
$\phi \in M^1_{(v)}(\rr d)\setminus 0$ in place of
$\phi \in \Sigma _1(\rr d)\setminus 0$ at each occurrence.

\vrum

\item[{\rm{(3)}}] if $\Phi _2\lesssim \Phi _1$, $\Psi _2\lesssim \Psi _1$
and $\omega _2\lesssim \omega _1$, then
%%
%\begin{alignat*}{2}
$$
\Sigma _{1}(\rr d)
\subseteq
M^{\Phi _{1}, \Psi _1}_{(\omega _1)}(\rr d)
\subseteq
M^{\Phi _{2} ,\Psi _{2}}_{(\omega _2)}(\rr d)
\subseteq
\Sigma ' _{1}(\rr d)\text .
$$
\end{enumerate}
\end{prop}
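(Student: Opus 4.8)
The plan is to establish the three assertions in turn, in each case pulling everything back, via $f\mapsto V_\phi f$, to properties of the mixed weighted Orlicz spaces $L^{\Phi,\Psi}_{(\omega)}(\rdd)$ recalled above, and using the short-time Fourier transform characterisation of $\Sigma_1(\rd)$ and $\Sigma_1'(\rd)$ from Proposition \ref{stftGelfand2} (with $s=1$ and the Gaussian $\phi$ of Definition \ref{Def:Orliczmod}, which lies in $\maclS_{1/2}(\rd)\subseteq\Sigma_1(\rd)$), together with the translation bounds of Lemma \ref{T} and the invariance statement of Proposition \ref{Prop:OrliczModInvariance}. The workhorse throughout is the reproducing inequality
$$
|V_{\phi_1}f(x,\xi)|\le\frac{1}{|\scal{\phi_3}{\phi_2}|}\bigl(|V_{\phi_2}f|\ast|V_{\phi_1}\phi_3|\bigr)(x,\xi),\qquad \scal{\phi_3}{\phi_2}\neq 0,\ f\in\maclS_{1/2}'(\rd).
$$
For (1), after applying $f\mapsto V_\phi f$ the claim becomes $\nm{V_\phi f}{L^{\Phi,\Phi}_{(\omega)}}\asymp\nm{V_\phi f}{L^{\Phi}_{(\omega)}}$; the inequality $\nm G{L^\Phi_{(\omega)}}\lesssim\nm G{L^{\Phi,\Phi}_{(\omega)}}$ holds for every measurable $G$ by the convexity estimate $\Phi(\theta t)\le\theta\Phi(t)$, $\theta\in[0,1]$, applied in each variable, while the reverse inequality --- which can fail for arbitrary $G$ when $\Phi$ is very flat near the origin --- holds for short-time Fourier transforms because the reproducing inequality forces $|V_\phi f|$ to be dominated pointwise by its own local average $|V_\phi f|\ast|V_\phi\phi|$; I would either invoke this equivalence from \cite{TofUst,ToUsNaOz} or deduce it from the reproducing inequality together with the convolution estimate below.

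For (3), the left inclusion follows from Proposition \ref{stftGelfand2}(1): for $f\in\Sigma_1(\rd)$ one has $|V_\phi f(x,\xi)|\lesssim e^{-r(|x|+|\xi|)}$ for every $r>0$, while \eqref{Eq:BoundWeights} gives $\omega_1(x,\xi)\lesssim e^{r_0(|x|+|\xi|)}$ for some fixed $r_0$; taking $r>r_0$ and using that a (quasi-)Young function that is finite near the origin satisfies $\Phi(t)\lesssim t^{p_0}$ there (by convexity), a direct computation of inner and outer Luxemburg norms shows $(x,\xi)\mapsto e^{-(r-r_0)(|x|+|\xi|)}\in L^{\Phi_1,\Psi_1}(\rdd)$, hence $V_\phi f\in L^{\Phi_1,\Psi_1}_{(\omega_1)}(\rdd)$. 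The middle inclusion is monotonicity: Proposition \ref{Prop:OrliczModInvariance} gives $M^{\Phi_1,\Psi_1}_{(\omega_1)}(\rd)\subseteq M^{\Phi_2,\Psi_2}_{(\omega_1)}(\rd)$ from $\Phi_2\lesssim\Phi_1$ and $\Psi_2\lesssim\Psi_1$, and $\omega_2\lesssim\omega_1$ makes the weighted Luxemburg (quasi-)norm monotone, so $M^{\Phi_2,\Psi_2}_{(\omega_1)}(\rd)\subseteq M^{\Phi_2,\Psi_2}_{(\omega_2)}(\rd)$. For the right inclusion, $\Phi_{[\infty]}$ of Remark \ref{Rem:PhiLeb} vanishes on $[0,1]$, so $\Phi_{[\infty]}\lesssim\Phi_2$ and $\Phi_{[\infty]}\lesssim\Psi_2$ near the origin, and Proposition \ref{Prop:OrliczModInvariance} yields $M^{\Phi_2,\Psi_2}_{(\omega_2)}(\rd)\subseteq M^{\infty,\infty}_{(\omega_2)}(\rd)$; membership there means $|V_\phi f(x,\xi)|\lesssim\omega_2(x,\xi)^{-1}\lesssim e^{r(|x|+|\xi|)}$ for some $r$, so $f\in\Sigma_1'(\rd)$ by Proposition \ref{stftGelfand2}(2).

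Part (2) is the main point and follows the classical change-of-window scheme. Alongside the reproducing inequality I would use the convolution estimate
$$
\nm{G\ast H}{L^{\Phi,\Psi}_{(\omega)}}\lesssim\nm{G}{L^{\Phi,\Psi}_{(\omega)}}\,\nm{H}{L^1_{(v)}},\qquad \omega\ v\text{-moderate},
$$
which comes from Minkowski's integral inequality --- in its $p_0$-subadditive form when $\Phi,\Psi$ are only quasi-Young of order $p_0$ --- together with Lemma \ref{T}. Since $\phi_1,\phi_3\in\Sigma_1(\rd)$ gives $V_{\phi_1}\phi_3\in\Sigma_1(\rdd)\subseteq L^1_{(v)}(\rdd)$ for every weight $v$ of the form $e^{r|\cdot|}$, taking $\phi_3=\phi_2$ and applying the convolution estimate in both directions yields $\nm{V_{\phi_1}f}{L^{\Phi,\Psi}_{(\omega)}}\asymp\nm{V_{\phi_2}f}{L^{\Phi,\Psi}_{(\omega)}}$ for any two nonzero windows in $\Sigma_1(\rd)$; specialising one of them to the Gaussian of Definition \ref{Def:Orliczmod} gives the characterisation and window independence in (2), and the variant with windows in $M^1_{(v)}(\rd)\setminus 0$ is identical once one notes that $V_{\phi_1}\phi_3\in L^1_{(v)}(\rdd)$ whenever $\phi_1,\phi_3\in M^1_{(v)}(\rd)$ (again via the reproducing inequality). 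Finally, $V_\phi$ is by construction an isometric embedding of $M^{\Phi,\Psi}_{(\omega)}(\rd)$ into $L^{\Phi,\Psi}_{(\omega)}(\rdd)$ whose range is the fixed-point set of a bounded idempotent on $L^{\Phi,\Psi}_{(\omega)}(\rdd)$ built from the STFT inversion formula, hence a closed subspace, so $M^{\Phi,\Psi}_{(\omega)}(\rd)$ inherits completeness from $L^{\Phi,\Psi}_{(\omega)}(\rdd)$, which is a quasi-Banach space of order $p_0$ in general and a Banach space when $\Phi,\Psi$ are Young functions. The genuinely delicate points are the reverse inequality in (1) --- where one must exploit the rigidity of short-time Fourier transforms rather than argue for general functions --- and the convolution estimate underlying window independence in the mixed, weighted, quasi-Banach regime; both are somewhat subtle precisely because no $\Delta_2$-condition is assumed, so one cannot fall back on density of test functions and must argue directly with Luxemburg moduli, using $v$-moderation to absorb the weight and the $p_0$-triangle inequality to control the failure of subadditivity.
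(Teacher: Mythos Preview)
Your sketch is essentially correct and matches the paper's approach, which gives no self-contained proof but simply refers to Theorem 2.4 in \cite{TofUst}, Theorems 3.1 and 5.9 in \cite{ToUsNaOz}, and Proposition \ref{Prop:OrliczModInvariance}; the tools you invoke --- the reproducing inequality, the convolution bound $\nm{G*H}{L^{\Phi,\Psi}_{(\omega)}}\lesssim\nm G{L^{\Phi,\Psi}_{(\omega)}}\nm H{L^1_{(v)}}$ for window independence, and the bounded STFT projection $P_\phi$ for completeness (the last being exactly Lemma \ref{Lemma:ContProjOrlSp} in Appendix \ref{app:A}) --- are precisely what underlies those cited references.

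One small caution on (1): your assertion that $\nm G{L^\Phi_{(\omega)}}\lesssim\nm G{L^{\Phi,\Phi}_{(\omega)}}$ holds for \emph{every} measurable $G$ via $\Phi(\theta t)\le\theta\Phi(t)$ is not correct as stated. For non-power Young functions the iterated Luxemburg norm does not compare to the joint one in either direction on arbitrary functions, so \emph{both} inclusions in (1) genuinely require the STFT structure; in \cite{ToUsNaOz} this is handled by Gabor discretisation (passing to sequence spaces $\ell^\Phi$ and $\ell^{\Phi,\Phi}$, where the comparison does go through), which you rightly list as your fallback. With that adjustment your outline is sound.
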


\par

\par

\begin{prop}\label{Prop:BasicPropOrlModSp2}
Let $\Phi$, $\Psi$ be Young functions, and let
$\omega \in \mascP  _{E}(\rr {2d})$.
Then the following is true:
\begin{enumerate}
\item[{\rm{(1)}}] the sesqui-linear form $( \cdo ,\cdo )_{L^2}$ on
$\Sigma _1(\rr d)$ extends to a continuous map from
$$
M^{\Phi,\Psi}_{(\omega )}(\rr d)
\times
M^{\Phi ^*,\Psi ^*}_{(1/\omega )}(\rr d)
$$
to $\mathbf C$. This extension is unique when $\Phi$ and $\Psi$ fulfill
a local $\Delta _2$-condition. If $\nmm f = \sup |{(f,g)_{L^2}}|$, where
the supremum is
taken over all $b\in M^{\Phi ^*,\Psi ^*}_{(1/\omega )}(\rr d)$ such that
$\nm b{M^{\Phi ^*,\Psi ^*}_{(1/\omega )}}\le 1$, then
$\nmm {\cdot}$ and $\nm
\cdot {M^{p,q}_{(\omega )}}$ are equivalent norms;

\vrum

\item[{\rm{(2)}}] if $\Phi$ and $\Psi$ fulfill
a local $\Delta _2$-condition, then $\Sigma _1(\rr d)$ is dense
in $M^{\Phi,\Psi}_{(\omega )}(\rr d)$, and the dual space of
$M^{\Phi,\Psi}_{(\omega
)}(\rr d)$ can be identified with
$M^{\Phi ^*,\Psi ^*}_{(1/\omega )}(\rr
d)$, through the form $(\cdo  ,\cdo )_{L^2}$.
Moreover,
$\Sigma _1(\rr d)$ is weakly dense in
$M^{\Phi ^*,\Psi ^*}_{(\omega )}(\rr
d)$.
\end{enumerate}
\end{prop}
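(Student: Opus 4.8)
The plan is to realise $M^{\Phi,\Psi}_{(\omega)}(\rd)$ as a complemented subspace of the weighted mixed Orlicz space $L^{\Phi,\Psi}_{(\omega)}(\rdd)$ via the short-time Fourier transform, and then transport the classical Orlicz duality through the associated projection. Fix the Gaussian window $\phi$ of Definition \ref{Def:Orliczmod}; since $\phi\in\maclS_{1/2}(\rd)\subseteq\Sigma_1(\rd)$, Proposition \ref{Prop:BasicPropOrlModSp1} lets us compute the norms in \eqref{Eq:OrlModNorm} with this $\phi$. Let $A_\phi$ be the synthesis operator, i.e.\ the formal adjoint of $V_\phi$, so that $(A_\phi F,h)_{L^2(\rd)}=(F,V_\phi h)_{L^2(\rdd)}$ and, up to a fixed positive constant which we suppress, $A_\phi V_\phi=\nm{\phi}{L^2}^2\,\mathrm{Id}$ on $\maclS_{1/2}'(\rd)$; then $P:=\nm{\phi}{L^2}^{-2}V_\phi A_\phi$ is an idempotent with range $V_\phi(\maclS_{1/2}'(\rd))$ satisfying $P^*=P$ for the $L^2(\rdd)$ form and $|V_\phi A_\phi F|\le|F|\ast|V_\phi\phi|$. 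Three standard facts will be used without proof: the generalised Hölder inequality $\iint_{\rdd}|F\overline G|\lesssim\nm{F}{L^{\Phi,\Psi}_{(\omega)}}\nm{G}{L^{\Phi^*,\Psi^*}_{(1/\omega)}}$ (iterate the scalar Orlicz Hölder inequality, using $\omega\cdot(1/\omega)=1$); the equivalence of the Luxemburg and Amemiya norms on an Orlicz space; and Young's inequality for convolution on Orlicz spaces, which holds with no $\Delta_2$ assumption and, together with the rapid decay of $V_\phi\phi$, shows that $A_\phi$ maps $L^{\Phi_0,\Psi_0}_{(\omega_0)}(\rdd)$ boundedly into $M^{\Phi_0,\Psi_0}_{(\omega_0)}(\rd)$ for arbitrary Young functions $\Phi_0,\Psi_0$ and weights $\omega_0\in\mascP_E(\rdd)$.

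For part (1), I would set $\langle f,g\rangle:=\nm{\phi}{L^2}^{-2}(V_\phi f,V_\phi g)_{L^2(\rdd)}$ for $f\in M^{\Phi,\Psi}_{(\omega)}(\rd)$ and $g\in M^{\Phi^*,\Psi^*}_{(1/\omega)}(\rd)$; the Hölder inequality makes the integral absolutely convergent with $|\langle f,g\rangle|\lesssim\nm{f}{M^{\Phi,\Psi}_{(\omega)}}\nm{g}{M^{\Phi^*,\Psi^*}_{(1/\omega)}}$, Moyal's identity gives $\langle f,g\rangle=(f,g)_{L^2}$ when $f,g\in\Sigma_1(\rd)$, and independence of the window follows from the reproducing formula for the short-time Fourier transform combined with the same Hölder estimate, exactly as in the unweighted modulation space theory. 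For the norm equivalence, one inequality is the bound just displayed; for the converse, the Amemiya description gives $\nm{V_\phi f}{L^{\Phi,\Psi}_{(\omega)}}\lesssim\sup|(V_\phi f,H)_{L^2(\rdd)}|$ over $H$ in the unit ball of $L^{\Phi^*,\Psi^*}_{(1/\omega)}(\rdd)$, and replacing $H$ by $PH=V_\phi g$ with $g:=\nm{\phi}{L^2}^{-2}A_\phi H$ leaves the pairing unchanged (since $PV_\phi f=V_\phi f$ and $P^*=P$) while $\nm{g}{M^{\Phi^*,\Psi^*}_{(1/\omega)}}\lesssim\nm{H}{L^{\Phi^*,\Psi^*}_{(1/\omega)}}$ by boundedness of $A_\phi$; hence $\nm{f}{M^{\Phi,\Psi}_{(\omega)}}\lesssim\nmm{f}$. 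Uniqueness of the extension when $\Phi,\Psi$ satisfy a local $\Delta_2$-condition is then immediate from the norm density established in part (2).

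For part (2), assume $\Phi,\Psi$ satisfy a local $\Delta_2$-condition; by Proposition \ref{Prop:OrliczModInvariance} and Remark \ref{Rem:Delta2Cond} we may take them to satisfy $\Delta_2$ on all of $[0,\infty)$ without changing any of the spaces. The $\Delta_2$-condition makes the Orlicz norm absolutely continuous, so for $f\in M^{\Phi,\Psi}_{(\omega)}(\rd)$ the truncations $\chi_{B_R}V_\phi f$ converge to $V_\phi f$ in $L^{\Phi,\Psi}_{(\omega)}(\rdd)$ as $R\to\infty$, where $B_R\subseteq\rdd$ is the ball of radius $R$. Setting $g_R:=\nm{\phi}{L^2}^{-2}A_\phi(\chi_{B_R}V_\phi f)$ one checks $g_R\in\Sigma_1(\rd)$ (a compactly supported superposition of time-frequency shifts of a Gaussian) and $|V_\phi f-V_\phi g_R|\le|(1-\chi_{B_R})V_\phi f|\ast|V_\phi\phi|$, so $g_R\to f$ in $M^{\Phi,\Psi}_{(\omega)}(\rd)$ by Young's inequality; this proves that $\Sigma_1(\rd)$ is dense. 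For the dual, a continuous functional $\ell$ on $M^{\Phi,\Psi}_{(\omega)}(\rd)$ produces the functional $\ell\circ(\nm{\phi}{L^2}^{-2}A_\phi)$ on $L^{\Phi,\Psi}_{(\omega)}(\rdd)$, which by the Orlicz duality (valid since $\Phi,\Psi$ satisfy $\Delta_2$) is represented by some $G\in L^{\Phi^*,\Psi^*}_{(1/\omega)}(\rdd)$; using $\nm{\phi}{L^2}^{-2}A_\phi V_\phi f=f$ and $P^*=P$ this rewrites as $\ell(f)=(V_\phi f,G)_{L^2(\rdd)}=\nm{\phi}{L^2}^2\langle f,g\rangle$ with $g:=\nm{\phi}{L^2}^{-2}A_\phi G\in M^{\Phi^*,\Psi^*}_{(1/\omega)}(\rd)$. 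Conversely each such $g$ induces a bounded functional by part (1), and $g\mapsto(\,\cdot\,,g)_{L^2}$ is injective since $\langle f,g\rangle=0$ for all $f\in\Sigma_1(\rd)$ forces $g=0$ in $\Sigma_1'(\rd)$; the open mapping theorem then identifies the dual of $M^{\Phi,\Psi}_{(\omega)}(\rd)$ with $M^{\Phi^*,\Psi^*}_{(1/\omega)}(\rd)$. Applying this identification with $1/\omega$ in place of $\omega$ exhibits $M^{\Phi^*,\Psi^*}_{(\omega)}(\rd)$ as the dual of $M^{\Phi,\Psi}_{(1/\omega)}(\rd)$; since $\Sigma_1(\rd)$ separates the points of this predual (its elements being Gelfand-Shilov distributions), $\Sigma_1(\rd)$ is weakly dense in $M^{\Phi^*,\Psi^*}_{(\omega)}(\rd)$.

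The delicate point is closing the retract loop: one must simultaneously have that $V_\phi$ is, up to a constant, an isometric embedding of the Orlicz modulation space into the mixed weighted Orlicz space and that $A_\phi$ maps the latter boundedly back, so that $P$ is a bounded self-adjoint projection and $M^{\Phi,\Psi}_{(\omega)}(\rd)$ is complemented in $L^{\Phi,\Psi}_{(\omega)}(\rdd)$. The duality and the norm density are then inherited from the corresponding statements for $L^\Phi$ via $P$, and it is exactly at the Orlicz-duality and absolute-continuity steps that the (local) $\Delta_2$-condition enters essentially --- the same obstruction that makes $\Sigma_1(\rd)$ fail to be norm dense, and the duality fail, for Young functions such as $\Phi(t)=-t^2\log t$ treated later in the paper.
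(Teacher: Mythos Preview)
Your proof is correct and follows essentially the same route as the paper's Appendix~\ref{app:A}: realise $M^{\Phi,\Psi}_{(\omega)}$ as a complemented retract of $L^{\Phi,\Psi}_{(\omega)}(\rdd)$ via the STFT projection $P_\phi=\nm{\phi}{L^2}^{-2}V_\phi V_\phi^*$ (Lemma~\ref{Lemma:ContProjOrlSp}), then pull H\"older, Orlicz duality, and norm equivalence back through $P_\phi$. The only tactical differences are that for the reverse norm inequality you invoke the Amemiya/Orlicz-norm description of $L^{\Phi,\Psi}$ (which indeed holds without $\Delta_2$), whereas the paper runs Hahn--Banach on the one-dimensional span of a fixed $f_0$; and you supply an explicit density argument via truncation and resynthesis, while the paper simply asserts density. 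Both variants are standard and equally short.

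One small correction to your closing commentary: the Young function $\Phi(t)=-t^2\log t$ near the origin \emph{does} satisfy the local $\Delta_2$-condition (this is used in Example~\ref{Example:ContPseudo}), so density and duality do hold for $M^\Phi$; it is not an instance of the obstruction you describe. The genuine failures occur for Young functions like $\Phi_{[\infty]}$ or $-t/\log t$ near $0$ (cf.\ Example~\ref{Example:SpecYoungFunc}).
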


\par

%\begin{proof}

Proposition \ref{Prop:BasicPropOrlModSp1}
%The assertion (1)--(3)
follows from Theorem 2.4 in \cite{TofUst},
Theorems 3.1 and 5.9 in \cite{ToUsNaOz},
and Proposition
\ref{Prop:OrliczModInvariance}.
The details are left for the reader.
(See also Theorem 4.2 and other
results in \cite{FeiGro1}.)
%{\textbf{}}
Proposition \ref{Prop:BasicPropOrlModSp2}
is well-known in the case of modulation spaces
(see e.{\,}g. Chapters 11 and 12 in
\cite{Gro1}). For general Orlicz modulation
spaces, Proposition \ref{Prop:BasicPropOrlModSp2}
essentially follow from Propositions 4.3 and
4.9 in \cite{FeiGro1} and the
fact that similar results hold for
Orlicz spaces.
%\end{proof}

\par

In order to be self-contained we have
included a straight-forward proof of
Proposition \ref{Prop:BasicPropOrlModSp2}
in Appendix \ref{app:A},
with arguments adapted to the present
situation.

%We notice that \eqref{Eq:OrlModNorm1} and 
%eqref{Eq:OrlModNorm2} are norms when
%$\Phi , \Phi _1$ and $\Phi _2$ are Young functions.
%If $\omega \in \mascP_E(\rdd)$ as in Definition \ref{Def:Orliczmod},
%then we prove later on that the conditions
%$$
%\nm {V_\phi f} {L^{\Phi _{1}, \Phi _{2}}_{(\omega)}} <\infty
%\quad \text{and}\quad
%\nm {V_\phi f} {L^{\Phi}_{(\omega)}}<\infty
%$$
%are independent of the choices of $\phi$ in
%$\Sigma_1(\rd)\setminus{0}$ and that
%different $\phi$ give rise to equivalent quasi-norms.

\par

\par

\begin{example}\label{Ex:OrlModSpace1}
Let $\Phi$ be a Young function given by
\eqref{Eq:SpecOrlModSpIntro}.
For the
entropy functional \eqref{Eq:EntropyFunc} it is
announced in the introduction that it
might be more suitable to investigate
such functional in background of
the Orlicz modulation space
$M^\Phi (\rr d)$ instead of
the classical modulation space or
Lebesgue space $M^2(\rr d)=L^2(\rr d)$
when $\Phi$ is given by \eqref{Eq:SpecOrlModSpIntro}.
(See \cite{MajLab1,MajLab2} and the references 
therein.) 

\par

In fact, in Section \ref{sec3} we show that
\begin{enumerate}
\item The functional $E$ is continuous on $M^\Phi (\rr d)$,
but fails to be continuous on $M^2(\rr d)$. (Cf. Theorem
\ref{Thm:EntropyCont}.)

\vrum

\item The space $M^\Phi (\rr d)$ is close to $M^2(\rr d)$ in
the sense of
\begin{equation*}
M^p(\rr d)\subseteq M^\Phi (\rr d) \underset{\text{dense}}
\subseteq M^2(\rr d),
\qquad p<2.
\end{equation*}
(Cf. Lemma \ref{Lem:OrlModCloseM2}.)
\end{enumerate}
\end{example}

\par

\subsection{Pseudo-differential operators}

\par

Next we recall some basic facts from
pseudo-differential calculus
(cf. \cite{Hor1}). Let $s\ge 1/2$, $a\in \maclS _s
(\rr {2d})$, and let $A$ belong to $\GL (d,\mathbf R)$,
the set of all $d\times d$-matrices with entries in $\mathbf R$.
Then the pseudo-differential operator $\op _A(a)$
defined by
\begin{equation}\label{e0.5}
\op _A(a)f(x) = (2\pi )^{-d}\iint _{\rr {2d}}
a(x-A(x-y),\xi )
f(y)e^{i\scal {x-y}\xi}\, dyd\xi
\end{equation}
is a linear and continuous operator on $\maclS _s (\rr d)$.
For $a\in \maclS _s'(\rr {2d})$ the
pseudo-differential operator $\op _A(a)$ is defined as
the continuous
operator from $\maclS _s(\rr d)$ to $\maclS _s'(\rr d)$ with
distribution kernel given by
\begin{equation}\label{atkernel}
K_{a,A}(x,y)=(2\pi )^{-\frac d2}(\mascF _2^{-1}a)(x-A(x-y),x-y).
\end{equation}
Here $\mascF _2F$ is the partial Fourier transform of $F(x,y)\in
\maclS _s'(\rr {2d})$ with respect to the variable $y \in \rr d$. This
definition generalizes \eqref{e0.5} and is well defined, since the mappings
\begin{equation}\label{homeoF2tmap}
\mascF _2\quad \text{and}\quad F(x,y)\mapsto F(x-A(x-y),y-x)
\end{equation}
are homeomorphisms on $\maclS _s'(\rr {2d})$.
The map $a\mapsto K_{a,A}$ is hence a homeomorphism on
$\maclS _s'(\rr {2d})$.

\par

If $A=0$, then $\op _A(a)$ is the standard or Kohn-Nirenberg representation
$a(x,D)$. If instead $A=\frac 12 I_d$, then $\op _A(a)$ agrees
with the Weyl operator or Weyl quantization $\op ^w(a)$. Here
$I_d$ is the $d\times d$ identity matrix.

\par

For any $K\in \maclS '_s(\rr {d_1+d_2})$, let $T_K$ be the
linear and continuous mapping from $\maclS _s(\rr {d_1})$
to $\maclS _s'(\rr {d_2})$ defined by
\begin{equation}\label{pre(A.1)}
(T_Kf,g)_{L^2(\rr {d_2})}
=
(K,g\otimes \overline f )_{L^2(\rr {d_1+d_2})},
\quad f \in \maclS _s(\rr {d_1}), \quad
g \in \maclS _s(\rr {d_2}).
\end{equation}
It is a well-known consequence of the Schwartz kernel
theorem that if $A\in \GL (d,\mathbf R)$, then
$K\mapsto T_K$ and $a\mapsto
\op _A(a)$ are bijective mappings from
$\mascS '(\rr {2d})$
to the space of linear and continuous mappings from
$\mascS (\rr d)$ to
$\mascS '(\rr d)$ (cf. e.{\,}g. \cite{Hor1}).

\par

Likewise the maps $K\mapsto T_K$
and $a\mapsto \op _A(a)$ are uniquely extendable to bijective
mappings from $\maclS _s'(\rr {2d})$ to the set of linear and
continuous mappings from $\maclS _s(\rr d)$
to $\maclS _s'(\rr d)$.
In fact, the asserted bijectivity for the map
$K\mapsto T_K$ follows from
the kernel theorems for topological vector spaces, using
the fact that Gelfand-Shilov spaces are inductive
or projective limits of certain Hilbert spaces
of Hermite series expansions (see \cite{Pil3,Kot1,SchWol}).
This kernel theorem corresponds to the
Schwartz kernel theorem in the usual distribution theory.
The other assertion follows from the fact that the map
$a\mapsto K_{a,A}$ is a homeomorphism on
$\maclS _s'(\rr {2d})$.

\par

In particular, for each $a_1\in \maclS _s '(\rr {2d})$ and $A_1,A_2\in
\GL (d,\mathbf R)$, there is a unique $a_2\in \maclS _s '(\rr {2d})$ such that
$\op _{A_1}(a_1) = \op _{A_2} (a_2)$. The relation between $a_1$ and $a_2$
is given by
\begin{equation}\label{calculitransform}
\op _{A_1}(a_1) = \op _{A_2}(a_2) \quad
\Leftrightarrow \quad
a_2(x,\xi )=e^{i\scal {(A_1-A_2)D_\xi }{D_x}}
a_1(x,\xi ).
\end{equation}
(Cf. \cite{Hor1}.) Note that the right-hand side
makes sense, since $\widehat a_2(\xi ,x)
=
e^{i\scal {(A_1-A_2)x}{\xi}}\widehat a_1(\xi ,x)$,
and that the map $a(\xi ,x)
\mapsto
e^{i\scal {Ax}\xi }a(\xi ,x)$ is continuous on
$\maclS _s '(\rr {2d})$ (see e.{\,}g. \cite{Tra}).

\par

The operator $e^{i\scal {AD_\xi }{D_x}}$ is
essential when transferring Wigner distributions
to each others. In what follows we have
the following continuity result for
$e^{i\scal {AD_\xi }{D_x}}$
when acting on Orlicz modulation spaces.

\par

\begin{prop}
Let $\Phi _1$ and $\Phi _2$ be quasi-Young functions,
$A\in \GL (d,\mathbf R)$, $s_1\ge \frac 12$,
$s_2>\frac 12$, $\omega _0\in \mascP _E(\rr {4d})$,
where $\mascP _E(\rr {4d})$ is  the set of all moderate
functions on $\rr {4d}$, and let
$$
\omega _A(x,\xi,\eta ,y)
=
\omega _0(x+Ay,\xi +A^*\eta ,\eta ,y).
$$
Then the following is true:
\begin{enumerate}
\item $e^{i\scal {AD_\xi}{D_x}}$ is continuous
from $\mascS (\rr {2d})$ to $\mascS '(\rr {2d})$,
and restricts to homoemorphisms on
$$
\maclS _{s_1}(\rr {2d}),
\quad
\Sigma _{s_2}(\rr {2d})
\quad \text{and}\quad
\mascS (\rr {2d}),
$$
and is uniquely extendable to homeomorphisms on
$$
\maclS _{s_1}'(\rr {2d}),
\quad
\Sigma _{s_2}'(\rr {2d})
\quad \text{and}\quad
\mascS '(\rr {2d}) \text ;
$$

\vrum

\item $e^{i\scal {AD_\xi}{D_x}}$ from
$\Sigma _1'(\rr {2d})$ to $\Sigma _1'(\rr {2d})$
restricts to a homeomorphism from
$M^{\Phi _1,\Phi _2}_{(\omega _0)}(\rr {2d})$ to
$M^{\Phi _1,\Phi _2}_{(\omega _A)}(\rr {2d})$, and
\begin{equation}\label{Eq:OscOpOrlModCont}
\nm {e^{i\scal {AD_\xi}{D_x}}a}
{M^{\Phi _1,\Phi _2}_{(\omega _A)}}
\asymp
\nm a{M^{\Phi _1,\Phi _2}_{(\omega _0)}},
\qquad
a\in M^{\Phi _1,\Phi _2}_{(\omega _0)}(\rr {2d}).
\end{equation}
\end{enumerate}
\end{prop}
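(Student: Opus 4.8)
The plan is to deduce (1) directly from homeomorphism properties already recorded above, and to obtain (2) from an explicit covariance formula for the short-time Fourier transform under $e^{i\scal{AD_\xi}{D_x}}$, combined with window independence of Orlicz modulation norms. For (1): by \eqref{calculitransform} with $A_1=A$ and $A_2=0$, the operator $e^{i\scal{AD_\xi}{D_x}}$ is precisely the map $a\mapsto a_2$ determined by $\op_A(a)=\op_0(a_2)$, and by \eqref{atkernel}--\eqref{homeoF2tmap} this equals the composition of $\mascF_2$, $\mascF_2^{-1}$ and two linear changes of variables of determinant one (those in \eqref{homeoF2tmap} for $A$ and for $0$). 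Each factor is a homeomorphism on $\mascS(\rr{2d})$, on $\maclS_{s_1}(\rr{2d})$ and $\Sigma_{s_2}(\rr{2d})$ for $s_1\ge\tfrac12$, $s_2>\tfrac12$, and, by transposition, on the corresponding distribution spaces. (Equivalently, conjugating by the Fourier transform turns $e^{i\scal{AD_\xi}{D_x}}$ into multiplication by the unimodular symbol $e^{i\scal{A\zeta_2}{\zeta_1}}$ -- cf.\ the remark after \eqref{calculitransform} -- whose derivatives grow at most polynomially, hence a multiplier on all these spaces.) In particular $e^{i\scal{AD_\xi}{D_x}}$ is a homeomorphism on $\maclS_{1/2}'(\rr{2d})\supseteq\Sigma_1'(\rr{2d})$ and restricts to one on $\Sigma_1(\rr{2d})$; this is all that is needed for (2).

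For (2), first note that $\omega_A=\omega_0\circ\chi$ with $\chi(x,\xi,\eta,y)=(x+Ay,\xi+A^*\eta,\eta,y)$ a linear bijection, so $\omega_A\in\mascP_E(\rr{4d})$ and the target space makes sense. Fix $\phi\in\Sigma_1(\rr{2d})\setminus0$ and set $\psi:=e^{-i\scal{AD_\xi}{D_x}}\phi$, which lies in $\Sigma_1(\rr{2d})\setminus0$ by part (1). The crux is the pointwise identity
\begin{equation*}
\bigl|V_\phi\bigl(e^{i\scal{AD_\xi}{D_x}}a\bigr)(x,\xi,\eta,y)\bigr|
=
\bigl|V_\psi a(x+Ay,\,\xi+A^*\eta,\,\eta,\,y)\bigr|,
\qquad (x,\xi,\eta,y)\in\rr{4d},
\end{equation*}
valid for $a\in\Sigma_1'(\rr{2d})$. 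I would prove it by passing to the Fourier transform, where $e^{i\scal{AD_\xi}{D_x}}$ becomes multiplication by the bilinear chirp $e^{i\scal{A\zeta_2}{\zeta_1}}$, and applying the fundamental identity of time-frequency analysis $|V_gh(z,\zeta)|=|V_{\widehat g}\widehat h(\zeta,-z)|$ twice; the intermediate point is that multiplication by $e^{i\scal{A\zeta_2}{\zeta_1}}$ shears the short-time Fourier transform, all the spurious quadratic and linear phase factors being absorbed into a (modulated) window and into the shift of the frequency variable. Equivalently, the identity expresses the metaplectic covariance of time-frequency shifts under $e^{i\scal{AD_\xi}{D_x}}$: this operator conjugates the shift by $(z,\zeta)$ into the shift by $\chi(z,\zeta)$ up to a unimodular constant, $\chi$ being the time-one Hamiltonian flow of the quadratic symbol $\scal{Ay}{\eta}$, whose Weyl quantization is $\scal{AD_\xi}{D_x}$.

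Granting the identity, \eqref{Eq:OscOpOrlModCont} follows by a change of variables. Computing the $M^{\Phi_1,\Phi_2}_{(\omega_A)}$-norm with the window $\phi$, using Proposition \ref{Prop:BasicPropOrlModSp1}(2) and the identity,
\begin{equation*}
\nm{e^{i\scal{AD_\xi}{D_x}}a}{M^{\Phi_1,\Phi_2}_{(\omega_A)}}
\asymp
\NM{(x,\xi,\eta,y)\mapsto \bigl|V_\psi a(x+Ay,\xi+A^*\eta,\eta,y)\bigr|\,\omega_A(x,\xi,\eta,y)}{L^{\Phi_1,\Phi_2}}.
\end{equation*}
For each fixed $(\eta,y)$ the substitution $(x,\xi)\mapsto(x+Ay,\xi+A^*\eta)$ is a translation of $\rr{2d}$: it has Jacobian one and leaves the inner Luxemburg norm over $(x,\xi)$ invariant, and $\omega_A(x-Ay,\xi-A^*\eta,\eta,y)=\omega_0(x,\xi,\eta,y)$ by the definition of $\omega_A$. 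Hence the right-hand side equals $\nm{V_\psi a}{L^{\Phi_1,\Phi_2}_{(\omega_0)}}$, which is $\asymp\nm{a}{M^{\Phi_1,\Phi_2}_{(\omega_0)}}$ by Proposition \ref{Prop:BasicPropOrlModSp1}(2) again, now with the admissible window $\psi$. This proves \eqref{Eq:OscOpOrlModCont} and, in particular, boundedness $M^{\Phi_1,\Phi_2}_{(\omega_0)}(\rr{2d})\to M^{\Phi_1,\Phi_2}_{(\omega_A)}(\rr{2d})$; applying the same computation to $e^{-i\scal{AD_\xi}{D_x}}$, whose associated weight relative to $\omega_A$ equals $\omega_A\circ\chi^{-1}=\omega_0$, gives boundedness of the inverse map. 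Together with part (1) this identifies $e^{i\scal{AD_\xi}{D_x}}$ as a homeomorphism between the two Orlicz modulation spaces.

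The main obstacle is the covariance identity, and specifically getting its signs to match the weight $\omega_A$ prescribed in the statement: this requires careful tracking of the Fourier-transform normalisation, of which short-time Fourier frequency variable ($\eta$ or $y$) is paired with $x$ and which with $\xi$, and of the unimodular phases produced along the way (harmless once one passes to absolute values). Everything else -- the determinant-one change of variables, translation invariance of the mixed Orlicz norm in the first group of variables, window independence, and moderateness of $\omega_A$ -- is routine, being supplied by Proposition \ref{Prop:BasicPropOrlModSp1}, Lemma \ref{T} and part (1).
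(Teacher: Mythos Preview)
Your proposal is correct and follows essentially the same route as the paper: the key step is exactly the covariance identity $|V_{T\phi}(Ta)(x,\xi,\eta,y)|=|V_\phi a(x+Ay,\xi+A^*\eta,\eta,y)|$ (your version with $\psi=T^{-1}\phi$ is a relabeling of this), after which one multiplies by $\omega_A$, changes variables in the inner $L^{\Phi_1}$-norm, applies the outer $L^{\Phi_2}$-norm, and invokes window independence. The only difference is that the paper simply cites this identity from \cite[(2.12)]{Toft15B} and part (1) from \cite{CapTof,Tra}, whereas you sketch independent derivations; your arguments there are fine, though in practice the bookkeeping of signs you flag as the main obstacle is most cleanly handled by the direct computation in \cite{Toft15B} rather than by going through the fundamental identity twice.
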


\par

\begin{proof}
 We shall follow the proof of
\cite[Proposition 2.8]{Toft15B}.

\par

The assertion (1) and its proof
can be found in e.{\,}g.
\cite{CapTof,Tra}.
Let $T=e^{i\scal {AD_\xi}{D_x}}$.
By (2.12) in \cite{Toft15B} we have
$$
|(V_{T\phi}(Tf))(x,\xi ,\eta ,y)|
=
|(V_\phi f)(x+Ay,\xi +A^*\eta ,\eta ,y)|,
$$
when $\phi \in \Sigma _1(\rr {2d})$. 
By multiplying with $\omega _A$,
applying the $L^\Phi$
norm on the $x$ and $\xi$ variables and
then taking $x+Ay$ and $\xi +A^*\eta$
as new variables of integration give
$$
\nm {(V_{T\phi }(Tf))
(\cdo ,\eta ,y)\omega _A(\cdo ,\eta ,y)}
{L^\Phi (\rr {2d})}
=
\nm {(V_{\phi}f)
(\cdo ,\eta ,y)\omega (\cdo ,\eta ,y)}
{L^\Phi (\rr {2d})}.
$$
The relation \eqref{Eq:OscOpOrlModCont}
now follows by applying the
$L^\Psi$ norm with respect to the
$y$ and $\eta$ variables on the latter 
identity. This gives the result.
\end{proof}

\par

For future references we observe the relationship 
\begin{equation}\label{Eq:STFTLinkKernelSymbol}
\begin{aligned}
|(V_\phi K_{a,A})(x,y,\xi ,-\eta )|
&=
|(V_\psi a)(x-A(x-y),A^*\xi +(I-A^*)\eta ,\xi -\eta ,y-x)|,
\\[1ex]
\phi (x,y) &= (\mascF _2\psi )(x-A(x-y),x-y)
\end{aligned}
\end{equation}
between symbols and kernels for pseudo-differential 
operators, which follows by straight-forward
applications of Fourier inversion
formula (see also the proof of Proposition 2.5 in 
\cite{Toft15B}).

\par

\subsection{Wigner distributions}

\par

Next we recall general classes of Wigner distributions
parameterized by matrices. Let $A\in \GL (d,\mathbf R)$.
Then the $A$-Wigner distribution of
$f_1,f_2\in \mascS (\rr d)$, is defined by the formula
\begin{equation}\label{Eq:WignerDef}
W_{f_1,f_2}^A(x,\xi ) \equiv \mascF \big (f_1(x+A\cdo
)\overline{f_2(x+(A-I)\cdo )} \big ) (\xi ),
\end{equation}
which takes the form
$$
W_{f_1,f_2}^A(x,\xi ) =(2\pi )^{-\frac d2} \int
f_1(x+Ay)\overline{f_2(x+(A-I)y) }e^{-i\scal y\xi}\, dy,
$$
when $f_1,f_2\in \maclS _s (\rr d)$.
We set $W_{f_1,f_2}=W_{f_1,f_2}^A$ when
$A=\frac 12 I_d$ and $I_d$ is the $d\times d$,
in which case we get the classical Wigner
distribution.

\par

The definition of Wigner distributions
is extendable in various ways, which
the following result indicates. For the
proof we refer to \cite{Toft15B}
and its references.

\par

\begin{prop}
Let $A\in \GL (d,\mathbf R)$, $s\ge \frac 12$
and $T$ from
$\mascS (\rr d)\times \mascS (\rr d)$
to $\mascS '(\rr {2d})$ be the map given by
$(f_1,f_2)\mapsto W_{f_1,f_2}^A$. Then the
following is true:
\begin{enumerate}
\item $T$ restricts to a
continuous map from $\maclS _s (\rr d)\times
\maclS _s (\rr d)$
to $\maclS _s (\rr {2d})$, and is
uniquely extendable to a
continuous map from $\maclS _s '(\rr d)\times
\maclS _s '(\rr d)$
to $\maclS _s '(\rr {2d})$; 

\vrum

\item $T$ restricts to a
continuous map from $\maclS _s '(\rr d)\times
\maclS _s (\rr d)$ or from $\maclS _s (\rr d)\times
\maclS _s '(\rr d)$
to $\maclS _s '(\rr {2d})\cap C^\infty (\rr {2d})$.
\end{enumerate}

\par

The same holds true with $\mascS$ in place of
$\maclS _s$ at each occurrence. If in addition
$s>\frac 12$, then the same holds
true with $\Sigma _s$ in place of $\maclS _s$
at each occurrence.
\end{prop}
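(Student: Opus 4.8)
The plan is to realize $T$ as a composition of three elementary maps, each of which is (or restricts and extends to) a homeomorphism on the relevant Gelfand--Shilov scale, and then read off assertion (1); assertion (2) will follow by combining (1) with the observation that the short-time Fourier transform depends smoothly on its parameters when the window is a test function. Let $T_A$ be the linear map on $\rr {2d}$ given by $T_A(x,y)=(x+Ay,\,x+(A-I)y)$; a short computation gives $\det T_A=(-1)^d\neq 0$, so $T_A\in \GL (2d,\mathbf R)$, with inverse $T_A^{-1}(u,v)=((I-A)u+Av,\,u-v)$. Writing $\theta_A G=G\circ T_A$ for pull-back along $T_A$, $\mascF _2$ for the partial Fourier transform in the second $\rr d$-variable, and $f_1\otimes \overline{f_2}$ for the tensor product $(u,v)\mapsto f_1(u)\overline{f_2(v)}$ (extended to distributions in the usual way), the definition \eqref{Eq:WignerDef} becomes the factorization
\[
W^A_{f_1,f_2}=\mascF _2\bigl(\theta_A(f_1\otimes \overline{f_2})\bigr).
\]

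Now one treats the three factors separately. First, complex conjugation is a conjugate-linear homeomorphism on $\maclS _s(\rr d)$ and on $\maclS _s'(\rr d)$, and the tensor map $(g_1,g_2)\mapsto g_1\otimes g_2$ is continuous from $\maclS _s(\rr d)\times \maclS _s(\rr d)$ to $\maclS _s(\rr {2d})$ by a direct estimate on the seminorms \eqref{gfseminorm}; it extends uniquely to a continuous map $\maclS _s'(\rr d)\times \maclS _s'(\rr d)\to \maclS _s'(\rr {2d})$, which may either be quoted from the kernel theorem for Gelfand--Shilov spaces or, more in the spirit of this paper, reduced to Proposition \ref{stftGelfand2} via the identity $V_{\phi_1\otimes \phi_2}(g_1\otimes g_2)=(V_{\phi_1}g_1)\otimes (V_{\phi_2}g_2)$ (after reordering variables), since a product of two functions obeying the bound in Proposition \ref{stftGelfand2}(2) obeys it again. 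Secondly, $\theta_A$ is a homeomorphism on $\maclS _s(\rr {2d})$: inserting the linear substitution $T_A$ into \eqref{gfseminorm} and using $|T_Aw|\asymp |w|$ together with the chain rule turns $x^\beta\partial ^\alpha$ into a finite combination of terms of the same Gelfand--Shilov type, up to adjusting the constants; by transposition (the transpose of $\theta_A$ being $\theta_{T_A^{-1}}$ up to the constant $|\det T_A|^{-1}$) it is then also a homeomorphism on $\maclS _s'(\rr {2d})$. Thirdly, $\mascF _2$ is a homeomorphism on $\maclS _s(\rr {2d})$ and on $\maclS _s'(\rr {2d})$ by the facts recalled in Subsection \ref{subsec-Gelfand-Shilov}. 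Composing the three maps gives (1).

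For (2), assume first $f_1\in \maclS _s'(\rr d)$ and $f_2\in \maclS _s(\rr d)$; the opposite case follows from the relation $W^A_{f_1,f_2}=\overline{W^{I-A}_{f_2,f_1}}$ together with the first case applied to the matrix $I-A$. Since $\maclS _s(\rr d)\hookrightarrow \maclS _s'(\rr d)$, part (1) already yields $W^A_{f_1,f_2}\in \maclS _s'(\rr {2d})$, so only the smoothness needs proof. Performing the change of variables $u=x+Ay$ in \eqref{Eq:WignerDef} writes $W^A_{f_1,f_2}(x,\xi)$ as the action of $f_1\in \maclS _s'(\rr d)$ on a function $g_{x,\xi}\in \maclS _s(\rr d)$ obtained from $\overline{f_2}$ by a linear substitution depending on $x$, followed by multiplication with a chirp of the form $e^{i\,\ell_{x,\xi}(u)}$ with $\ell_{x,\xi}$ affine; differentiating $g_{x,\xi}$ in $x$ or $\xi$ only pulls down polynomial factors in $u$ or derivatives of $f_2$, all of which preserve membership in $\maclS _s(\rr d)$ with estimates locally uniform in $(x,\xi)$. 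Hence $(x,\xi)\mapsto g_{x,\xi}$ is smooth (indeed real-analytic) from $\rr {2d}$ into $\maclS _s(\rr d)$, and therefore $(x,\xi)\mapsto \langle f_1,g_{x,\xi}\rangle=W^A_{f_1,f_2}(x,\xi)$ lies in $C^\infty(\rr {2d})$, proving (2).

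The versions with $\mascS$ and, for $s>\frac 12$, with $\Sigma _s$ in place of $\maclS _s$ follow from the very same factorization and smoothness argument, since the mapping properties of conjugation, tensor products, pull-backs and partial Fourier transforms, as well as Proposition \ref{stftGelfand2}, all have their counterparts on those scales. There is no single hard step in this scheme; the two points that call for (routine) care are the continuity of the tensor map at the level of the distribution spaces --- our reduction to Proposition \ref{stftGelfand2} disposes of this cleanly --- and checking that $(x,\xi)\mapsto g_{x,\xi}$ is smooth into $\maclS _s(\rr d)$ \emph{uniformly in the Roumieu parameter $h$ on compact subsets of $\rr {2d}$}, which is what is actually needed on the inductive-limit space $\maclS _s(\rr d)$.
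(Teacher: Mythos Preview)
The paper does not prove this proposition; it simply says ``For the proof we refer to \cite{Toft15B} and its references.'' So there is nothing to compare against, and your argument has to stand on its own.

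Your factorization $W^A_{f_1,f_2}=\mascF _2\bigl(\theta_A(f_1\otimes \overline{f_2})\bigr)$ with $\det T_A=(-1)^d$ is correct and is the standard route to (1); each of the three factors is a homeomorphism on the $\maclS _s$ and $\maclS _s'$ scales, and your reduction of the tensor step on the distribution side to Proposition~\ref{stftGelfand2} via $V_{\phi_1\otimes\phi_2}(g_1\otimes g_2)=(V_{\phi_1}g_1)\otimes(V_{\phi_2}g_2)$ is clean. Nothing is missing in part (1).

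For (2) there is a genuine issue you should be aware of. Your change of variable $u=x+Ay$ tacitly assumes that $A$ is invertible, but in this paper $\GL(d,\mathbf R)$ is defined as \emph{all} $d\times d$ real matrices (see the sentence introducing \eqref{e0.5}). When $A$ is singular the substitution breaks down, and in fact the smoothness conclusion itself fails: for $A=0$ one computes directly $W^0_{f_1,f_2}(x,\xi)=f_1(x)\,e^{-i\scal x\xi}\,\overline{\widehat{f_2}(\xi)}$, which is not in $C^\infty(\rr{2d})$ when $f_1$ is a genuine distribution. Your conjugation identity $W^A_{f_1,f_2}=\overline{W^{I-A}_{f_2,f_1}}$ then shows that the second alternative in (2) likewise needs $I-A$ invertible. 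So your argument for (2) is correct under the additional hypothesis that the relevant matrix is invertible, and that hypothesis is in fact necessary for the assertion; the statement as written in the paper should be read with that proviso.
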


\par

The following result shows that Wigner
distributions with different matrices can
be carried over to each others. We refer
to Subsection 1.1 in \cite{Toft15B} for the
proof (see e.{\,}g. (1.10) in \cite{Toft15B}).

\par

\begin{lemma}\label{Lemma:WignerDistTransfers}
Let $A_1,A_2\in \GL(d,\mathbf R)$ and
$f_1,f_2\in \maclS _{1/2}'(\rr d)$. Then
$$
e^{i\scal {A_1D_\xi}{D_x}}W_{f_1,f_2}^{A_1}
=
e^{i\scal {A_2D_\xi}{D_x}}W_{f_1,f_2}^{A_2}.
$$
\end{lemma}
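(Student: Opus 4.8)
The plan is to reduce the identity $e^{i\scal {A_1D_\xi}{D_x}}W_{f_1,f_2}^{A_1} = e^{i\scal {A_2D_\xi}{D_x}}W_{f_1,f_2}^{A_2}$ to a straightforward computation on the Fourier transform side, using the definition \eqref{Eq:WignerDef} of the $A$-Wigner distribution together with the fact (recorded just before \eqref{calculitransform}) that $a(\xi,x)\mapsto e^{i\scal{Ax}{\xi}}a(\xi,x)$ is continuous on $\maclS_{1/2}'(\rr{2d})$, so that the operator $e^{i\scal{AD_\xi}{D_x}}$ acts on $W_{f_1,f_2}^A\in\maclS_{1/2}'(\rr{2d})$ via multiplication by $e^{i\scal{A\xi}{x}}$ on the side where the variables have been swapped by a (partial) Fourier transform. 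First I would establish, by density, that it suffices to prove the identity for $f_1,f_2\in\maclS_{1/2}(\rr d)$ (or even $\mascS(\rr d)$), since both sides are separately continuous in $(f_1,f_2)$ on $\maclS_{1/2}'(\rr d)\times\maclS_{1/2}'(\rr d)$ by the preceding proposition on Wigner distributions together with part (1) of the proposition on $e^{i\scal{AD_\xi}{D_x}}$; on the smooth side all integrals converge absolutely and Fubini applies.

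Next I would carry out the explicit computation. Writing $W_{f_1,f_2}^A(x,\xi) = (2\pi)^{-d/2}\int f_1(x+Ay)\overline{f_2(x+(A-I)y)}e^{-i\scal y\xi}\,dy$, I would compute the full Fourier transform of $W_{f_1,f_2}^A$ in both variables, or rather track how the symbol-to-kernel type substitutions interact with the oscillatory factor. The cleanest route is: the effect of $e^{i\scal{AD_\xi}{D_x}}$ on a function $G(x,\xi)$ is, on the partial Fourier transform $\widehat G$ taken in a suitable pair of variables, multiplication by a quadratic exponential. Concretely, one shows $\big(e^{i\scal{AD_\xi}{D_x}}W_{f_1,f_2}^A\big)$ has a closed form independent of $A$ — namely it equals $\mascF\big(f_1(\cdot)\overline{f_2}\big)$-type expression realizing the cross-ambiguity/Rihaczek object, so computing it for $A=A_1$ and for $A=A_2$ gives the same distribution. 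In practice I would introduce the change of variables $u=x+A y$, $v=x+(A-I)y$ (so $y=u-v$, $x=A v-(A-I)u$ wait — solve: $u-v=y$, and $x = u-Ay = v-(A-I)y$), which turns $W^A$ into a fixed bilinear kernel composed with a linear change of coordinates depending on $A$, and the operator $e^{i\scal{AD_\xi}{D_x}}$ is precisely the Fourier multiplier that undoes that $A$-dependent linear twist.

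The main obstacle — really the only nontrivial point — is bookkeeping the linear algebra: one must verify that the composition of the coordinate change defining $W^A$ with the metaplectic-type multiplier $e^{i\scal{AD_\xi}{D_x}}$ produces an expression in which $A$ has completely cancelled, and in particular that the quadratic phase from $e^{i\scal{AD_\xi}{D_x}}$ exactly matches the Jacobian/phase discrepancy between the $A_1$- and $A_2$-normalizations. I would handle this by appealing directly to the formula \eqref{calculitransform} for transferring pseudo-differential symbols between quantizations and to the already-cited relation (1.10) of \cite{Toft15B}: indeed, one can phrase $W_{f_1,f_2}^A$ as (a partial Fourier transform of) the Schwartz kernel $K_{a,A}$ of $\op_A(a)$ for the rank-one choice $a$ corresponding to $f_1\otimes\overline{f_2}$, and then Lemma~\ref{Lemma:WignerDistTransfers} becomes a direct restatement of \eqref{calculitransform} under this correspondence. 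So the proof is: (i) reduce to Schwartz inputs by density and separate continuity; (ii) identify $W_{f_1,f_2}^A$ with a partial Fourier transform of $K_{a,A}$ for the appropriate $a$; (iii) invoke \eqref{calculitransform} to get $e^{i\scal{A_1D_\xi}{D_x}}$ and $e^{i\scal{A_2D_\xi}{D_x}}$ acting on the respective $W^{A_j}$ yield the same distribution; (iv) conclude by continuity that the identity persists for $f_1,f_2\in\maclS_{1/2}'(\rr d)$.
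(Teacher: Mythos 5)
Your proposal is correct and follows the intended route: the paper gives no proof of its own (it defers to Subsection~1.1 and (1.10) of \cite{Toft15B}), but the argument there is exactly your steps (ii)--(iv) --- identify $W^A_{f_1,f_2}$ as the $A$-symbol of the fixed rank-one operator with kernel $(2\pi)^{-\frac d2}f_1\otimes\overline{f_2}$ (equivalently, use \eqref{Eq:PseudoWignerLink2}), apply \eqref{calculitransform} to get $W^{A_2}_{f_1,f_2}=e^{i\scal{(A_1-A_2)D_\xi}{D_x}}W^{A_1}_{f_1,f_2}$, and compose with $e^{i\scal{A_2D_\xi}{D_x}}$ using the evident semigroup property of these multipliers; the density/continuity reduction in (i) is also the standard way to pass to $\maclS_{1/2}'$. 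The only cosmetic issue is the half-finished change-of-variables aside, which is unnecessary once you invoke \eqref{atkernel} and \eqref{calculitransform}.
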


\par

Finally we recall the links
\begin{alignat}{3}
%\begin{aligned}
(\op _A(a)f,g)_{L^2(\rr d)}
&=
(2\pi )^{-\frac d2}
(a,W^A_{g,f})_{L^2(\rr {2d})}, &
%\\[1ex]
\qquad a &\in \mascS '(\rr {2d}),&\ f,g &\in \mascS (\rr d),
\label{Eq:PseudoWignerLink}
\intertext{and}
\op _A(W_{f_1,f_2}^A)f(x) &= (2\pi )^{-\frac d2}(f,f_2)_{L^2}f_1(x), &
\qquad f_1,f_2 &\in \mascS '(\rr {d}),&\ f &\in \mascS (\rr d),
\label{Eq:PseudoWignerLink2}
%\end{aligned}
\end{alignat}
between pseudo-differential operators
and Wigner distributions, which follows
by straight-forward computations. Similar facts hold true with
$\maclS _s$ or $\Sigma _s$ in place of $\mascS$ at each
occurrence.

\par

\begin{rem}
We observe that the definition of Wigner distributions
can be extended in various ways. For example,
metaplectic Wigner distributions are given in \cite{CorGia}.
\end{rem}

\par

%%%%%%%%%%%%%%%%%%%%%
\section{Continuity for pseudo-differential
operators when acting on Orlicz
modulation spaces}\label{sec2}
%%%%%%%%%%%%%%%%%%%%%

\par

In this section we deduce continuity properties for
Wigner distributions when acting on Orlicz modulation
spaces. Thereafter we apply such results to obtain
continuity properties for pseudo-differential operators
with symbols in Orlicz modulation spaces when acting
on other Orlicz modulation spaces.

\par

We need the following result on H{\"o}lder inequality for 
Orlicz spaces, and refer to \cite[III.3.3]{RaoRen1} for the
proof (cf. Theorem 7 in \cite[III.3.3]{RaoRen1}).
Here we let $S(\mu )$
be the set of all simple and ($\mu$-)measurable
functions on the measurable space $(E,\mu )$.

\par

\begin{prop}[H\"{o}lder inequality]\label{thm-Holder}
Let $(E,\mu )$ be a measurable space, and
$\Phi_j$, $j=0,1,2$ be Young's functions such that 
\begin{alignat*}{2}
\Phi_0(t_1t_2) &\leq \Phi_1(t_1) + \Phi_2(t_2),
& \qquad t_1,t_2 &\geq 0,
\intertext{or}
\Phi _0^{-1}(s)
&\ge
\Phi _1^{-1}(s)\cdot \Phi_2^{-1}(s), &
\qquad s &\geq 0.
\end{alignat*}
Then the map $(f_1,f_2)\mapsto f_1\cdot f_2$
from $S(\mu)\times S(\mu )$ to $S(\mu)$ extends
uniquely to a continuous map from
$L^{\Phi _1}(\mu)\times L^{\Phi _2}(\mu)$
to $L^{\Phi _0}(\mu)$,
and 
\begin{equation}\label{eq-Holder}
    \Vert f_1 \cdot f_2 \Vert_{L^{\Phi_0}}
    \leq
    2 \Vert f_1 \Vert_{L^{\Phi_1}} \Vert  f_2 \Vert_{L^{\Phi_2}},
    \qquad
    f_j\in L^{\Phi _j}(\mu),\ j=1,2.
\end{equation}
\end{prop}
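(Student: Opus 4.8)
The plan is to prove the Hölder inequality for Orlicz spaces by reducing to the classical Young inequality $t_1t_2\le\Phi_1(t_1)+\Phi_2^{\ast}(t_2)$ and its generalizations, and to handle the passage from simple functions to general $L^{\Phi_j}$ functions by a density/monotone convergence argument. First I would observe that the two hypotheses on $\Phi_0,\Phi_1,\Phi_2$ are essentially equivalent: the inequality $\Phi_0(t_1t_2)\le\Phi_1(t_1)+\Phi_2(t_2)$ can be rewritten, after setting $s_j=\Phi_j(t_j)$ and using the (essential) inverses, as $\Phi_0^{-1}(s_1+s_2)\ge\Phi_1^{-1}(s_1)\cdot\Phi_2^{-1}(s_2)$; specializing $s_1=s_2$ recovers the second displayed condition up to the doubling built into the $\Delta_2$-type manipulations, which is exactly where the factor $2$ in \eqref{eq-Holder} will come from. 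So the first step is a purely real-variable lemma relating the two forms of the hypothesis.

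Next, for $f_j\in S(\mu)$ I would prove the norm estimate directly. Fix $\lambda_j>\Vert f_j\Vert_{L^{\Phi_j}}$, so that $\int_E\Phi_j(|f_j|/\lambda_j)\,d\mu\le 1$ for $j=1,2$. Applying the pointwise inequality $\Phi_0(t_1t_2)\le\Phi_1(t_1)+\Phi_2(t_2)$ with $t_j=|f_j(x)|/\lambda_j$ and integrating gives
\begin{equation*}
\int_E\Phi_0\!\left(\frac{|f_1(x)f_2(x)|}{\lambda_1\lambda_2}\right)d\mu(x)
\le
\int_E\Phi_1\!\left(\frac{|f_1|}{\lambda_1}\right)d\mu
+
\int_E\Phi_2\!\left(\frac{|f_2|}{\lambda_2}\right)d\mu
\le 2 .
\end{equation*}
Since $\Phi_0$ is convex with $\Phi_0(0)=0$, it is super-additive in the sense that $\Phi_0(t/2)\le\frac12\Phi_0(t)$, hence $\int_E\Phi_0(|f_1f_2|/(2\lambda_1\lambda_2))\,d\mu\le\frac12\cdot 2=1$, which by the definition of the Luxemburg norm yields $\Vert f_1f_2\Vert_{L^{\Phi_0}}\le 2\lambda_1\lambda_2$. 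Taking the infimum over admissible $\lambda_1,\lambda_2$ gives \eqref{eq-Holder} for simple functions; in particular $f_1f_2$ is again a (simple, measurable) function lying in $L^{\Phi_0}(\mu)$.

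Finally I would extend from simple functions to all of $L^{\Phi_1}(\mu)\times L^{\Phi_2}(\mu)$ and check uniqueness of the extension. Given $f_j\in L^{\Phi_j}(\mu)$, choose simple functions $f_j^{(n)}$ with $0\le|f_j^{(n)}|\nearrow|f_j|$ and $f_j^{(n)}\to f_j$ pointwise a.e.; by \eqref{eq-Holder} applied to differences together with dominated convergence in $L^{\Phi_0}$ (using that $\Phi_0$ satisfies no growth restriction is fine here because we only need monotone/dominated control of the products $f_1^{(n)}f_2^{(n)}$, which are dominated by $|f_1f_2|$ whose $L^{\Phi_0}$-norm is already bounded by $2\Vert f_1\Vert_{L^{\Phi_1}}\Vert f_2\Vert_{L^{\Phi_2}}$ from the simple-function step applied to $|f_1^{(n)}||f_2^{(n)}|$ and Fatou), the sequence $f_1^{(n)}f_2^{(n)}$ is Cauchy in $L^{\Phi_0}(\mu)$ and its limit, which must equal $f_1f_2$ a.e., satisfies the same bound. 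The extension is unique because $S(\mu)$ is dense in each $L^{\Phi_j}(\mu)$ (for the relevant Young functions) and the bilinear map is continuous. The main obstacle I anticipate is the last step: $\Phi_0$ is allowed to be a fairly wild Young function (possibly taking the value $\infty$, not necessarily $\Delta_2$), so convergence in $L^{\Phi_0}$ does not follow from convergence in measure, and one must be careful to argue via the Luxemburg-norm estimate on differences rather than via any naive dominated-convergence statement in $L^{\Phi_0}$ itself; the clean way is to note $\Vert f_1^{(n)}f_2^{(n)}-f_1^{(m)}f_2^{(m)}\Vert_{L^{\Phi_0}}\lesssim\Vert f_1^{(n)}-f_1^{(m)}\Vert_{L^{\Phi_1}}\Vert f_2^{(n)}\Vert_{L^{\Phi_2}}+\Vert f_1^{(m)}\Vert_{L^{\Phi_1}}\Vert f_2^{(n)}-f_2^{(m)}\Vert_{L^{\Phi_2}}$ and invoke completeness of $L^{\Phi_0}(\mu)$.
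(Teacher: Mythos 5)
The paper does not prove this proposition at all; it cites Theorem~7 of \cite[III.3.3]{RaoRen1}. Your central computation is exactly the standard argument from that source: with $\lambda_j>\nm {f_j}{L^{\Phi_j}}$ the modular of $f_1f_2/(\lambda_1\lambda_2)$ is at most $2$, and convexity with $\Phi_0(0)=0$ gives $\Phi_0(t/2)\le\tfrac 12\Phi_0(t)$, whence $\nm {f_1f_2}{L^{\Phi_0}}\le 2\lambda_1\lambda_2$. That part is correct. Two remarks on the surrounding steps. First, concerning the two alternative hypotheses: what you actually need is that the inverse condition $\Phi_0^{-1}(s)\ge\Phi_1^{-1}(s)\Phi_2^{-1}(s)$ \emph{implies} the additive condition $\Phi_0(t_1t_2)\le\Phi_1(t_1)+\Phi_2(t_2)$ (set $s=\Phi_1(t_1)+\Phi_2(t_2)$ and use monotonicity of the generalized inverses together with $\Phi_j^{-1}(\Phi_j(t))\ge t$ and $\Phi_0(\Phi_0^{-1}(s))\le s$); your sketch runs the implication in the opposite, unneeded direction, and the factor $2$ in \eqref{eq-Holder} has nothing to do with this lemma --- it comes entirely from the modular bound $\le 2$ and the convexity halving.

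Second, and more substantively: your final density/Cauchy-sequence step is where the argument actually has a gap. For a general Young function $\Phi_j$ (no $\Delta_2$-condition is assumed here), simple functions are \emph{not} dense in $L^{\Phi_j}(\mu)$; their closure is the subspace usually denoted $E^{\Phi_j}$, which can be proper. Hence your approximating simple functions $f_j^{(n)}$ with $|f_j^{(n)}|\nearrow|f_j|$ need not converge to $f_j$ in the Luxemburg norm, and the estimate on differences does not show that $f_1^{(n)}f_2^{(n)}$ is Cauchy in $L^{\Phi_0}$. The repair is that no limiting argument is needed at all: the pointwise inequality $\Phi_0(t_1t_2)\le\Phi_1(t_1)+\Phi_2(t_2)$ integrates over $E$ for \emph{arbitrary} measurable $f_1,f_2$, so \eqref{eq-Holder} holds directly for all $f_j\in L^{\Phi_j}(\mu)$ with the product map being the ordinary pointwise product; the ``unique extension'' clause then only concerns the (automatic) agreement of this bilinear map with the product on $S(\mu)\times S(\mu)$ and its uniqueness on the closure of the simple functions. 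With that correction your proof is complete and self-contained, which is arguably a small improvement on the paper's bare citation.
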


\par

\subsection{Continuity for Wigner distributions
and short-time Fourier transforms on Orlicz
modulation spaces}

\par

It is natural to assume that our (quasi-)Young
functions should obey conditions given
in the following definition.

\par

\begin{defn}\label{Def:SteeredFunc}
Let $\Phi : [0,\infty ]\to [0,\infty ]$
and $p\in (0,\infty)$. Then $\Phi$
is called \emph{$p$-steered} if
one of the following conditions
are fulfilled:
\begin{enumerate}
\item $\limsup _{t\to 0+}
\frac {\Phi (t)}{t^p}=\infty$;

\vrum

\item $t\mapsto \Phi (t^{\frac 1p})$
is equal to a Young function near
origin.
\end{enumerate}
\end{defn}

\par

The first main result of the section is the following 
theorem which concerns continuity property for
Wigner distributions acting on
Orlicz modulation spaces. Here the involved
weight functions should satisfy
\begin{equation}\label{Eq:WeightCond1}
\omega (x,\xi ,\eta ,y)
\lesssim
\omega _1(x-Ay,\xi +(I-A^*)\eta )
\omega _2(x+(I-A)y,\xi -A^*\eta ).
\end{equation}

\par

\begin{thm}\label{Thm:WignerDistOrlicModSpace}
Let $A\in \GL (d,\mathbf R)$, $p,q\in [1,\infty ]$
be such that $p\le q$,
and let
$\Phi _j,\Psi _j : [0,\infty ]\to [0,\infty ]$, $j=1,2$,
be such that the following is true:
\begin{itemize}
\item if $p=\infty$, then $\Phi _j$ and $\Psi _j$ are Young
functions;

\vrum

\item if $p<\infty$, then
$\Phi _j$ and $\Psi _j$ are $p$-steered
Young functions
which fulfill a local $\Delta _2$-condition,
and for some $r>0$, it holds
\begin{alignat}{3}
\Phi _1(t),\Phi _2(t) &\gtrsim t^q & \quad
\Psi _1(t),\Psi _2(t) &\gtrsim t^q,
& \quad t &\in [0,r],
\label{Eq:WignerDistYoungFuncCond1}
\intertext{and}
\Phi _1^{-\&}(s)\Phi _2^{-\&}(s)
&\lesssim
s^{\frac 1p+\frac 1q}, &
\quad %\text{and}\quad
\Psi _1^{-\&}(t)\Psi _2^{-\&}(s)
&\lesssim
s^{\frac 1p+\frac 1q},
&\quad s &\in [0,r].
\label{Eq:WignerDistYoungFuncCond2}
\end{alignat}
\end{itemize}
Also let $\omega \in \mascP _E(\rr {4d})$ and
$\omega _1,\omega _2\in \mascP _E(\rr {2d})$
be such that \eqref{Eq:WeightCond1} holds.
Then the map $(f_1,f_2)\mapsto W_{f_1,f_2}^A$
from $\Sigma _1'(\rr d)
\times \Sigma _1'(\rr d)$ to $\Sigma _1'(\rr {2d})$
restricts to a continuous
map from $M^{\Phi _1,\Psi _1}_{(\omega _1)}(\rr d)
\times M^{\Phi _2,\Psi _2}_{(\omega _2)}(\rr d)$ to
$M^{p,q}_{(\omega )}(\rr {2d})$, and
\begin{equation}
\nm {W_{f_1,f_2}^A}{M^{p,q}_{(\omega )}}
\lesssim
\nm {f_1}{M^{\Phi _1,\Psi _1}_{(\omega _1)}}
\nm {f_2}{M^{\Phi _2,\Psi _2}_{(\omega _2)}},
\quad f_j\in M^{\Phi _j,\Psi _j}_{(\omega _j)}(\rr d),
\ j=1,2.
\end{equation}
\end{thm}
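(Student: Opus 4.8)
The plan is to reduce the estimate, via a ``magic formula'' for the short-time Fourier transform of a Wigner distribution, to an iterated mixed-norm estimate that is driven by Young's convolution inequality for Orlicz spaces.

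\smallskip

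\emph{Reductions and choice of window.} Since each $f_j$ lies in $M^{\Phi _j,\Psi _j}_{(\omega _j)}(\rr d)\subseteq \Sigma _1'(\rr d)$ (Proposition \ref{Prop:BasicPropOrlModSp1}\,(3)), the Wigner distribution $W^A_{f_1,f_2}$ is a well-defined element of $\Sigma _1'(\rr {2d})$, so $\nm{W^A_{f_1,f_2}}{M^{p,q}_{(\omega )}}=\nm{V_\Phi W^A_{f_1,f_2}}{L^{p,q}_{(\omega )}}$ for any admissible window $\Phi $. First I would invoke Remark \ref{Rem:Delta2Cond} and Proposition \ref{Prop:OrliczModInvariance}, together with the fact that Young functions coinciding near the origin define the same Orlicz modulation space with equivalent norms, to replace $\Phi _j,\Psi _j$ by (quasi-)Young functions that agree with the given ones near $0$ but satisfy \eqref{Eq:WignerDistYoungFuncCond1}, \eqref{Eq:WignerDistYoungFuncCond2} and the global $\Delta _2$-condition on all of $[0,\infty ]$; the price is a harmless change, by equivalence, of the Orlicz norms of the functions $\omega _j\,V_{\phi _j}f_j$ occurring below. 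Then I would take $\phi _1,\phi _2$ to be Gaussians and use $\Phi =W^A_{\phi _1,\phi _2}\in \Sigma _1(\rr {2d})\setminus 0$ as window, which is legitimate by Proposition \ref{Prop:BasicPropOrlModSp1}\,(2).

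\smallskip

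\emph{The pointwise identity.} The central tool is the identity (cf.\ \cite{Toft15B} and the kernel formula \eqref{Eq:STFTLinkKernelSymbol}), to be checked by Fourier inversion,
\[ |V_\Phi W^A_{f_1,f_2}(x,\xi ,\eta ,y)|=|V_{\phi _1}f_1(x-Ay,\xi +(I-A^*)\eta )|\,|V_{\phi _2}f_2(x+(I-A)y,\xi -A^*\eta )|, \]
whose arguments are aligned with those of $\omega _1,\omega _2$ in \eqref{Eq:WeightCond1}. Multiplying by $\omega $ and applying \eqref{Eq:WeightCond1} yields
\[ \omega \,|V_\Phi W^A_{f_1,f_2}(x,\xi ,\eta ,y)|\le G_1(x-Ay,\xi +(I-A^*)\eta )\,G_2(x+(I-A)y,\xi -A^*\eta ), \]
where $G_j=\omega _j\,|V_{\phi _j}f_j|$ satisfies $\nm{G_j}{L^{\Phi _j,\Psi _j}(\rr {2d})}=\nm{f_j}{M^{\Phi _j,\Psi _j}_{(\omega _j)}}$.

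\smallskip

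\emph{The iterated estimate.} It remains to bound the $L^{p,q}_{(\omega )}(\rr {4d})$-norm of the right-hand side. Writing this norm as $L^p$ in $x$, then $L^q$ in $y$, then $L^p$ in $\xi $, then $L^q$ in $\eta $ (the interchange of the $L^p_\xi $- and $L^q_y$-layers being allowed by Minkowski's integral inequality, using $p\le q$), I would treat the two inner layers first. For fixed $\xi ,\eta $, the substitution $u=x-Ay$ turns them into $\nm{H_1(u)H_2(u+y)}{L^p_uL^q_y}$ with $H_1=G_1(\cdot\,,\xi +(I-A^*)\eta )$, $H_2=G_2(\cdot\,,\xi -A^*\eta )$; raising the inner integral to the power $p$ makes it the correlation $(|H_1|^p\star |H_2|^p)(y)$, so the inner double norm equals $\nm{|H_1|^p\star |H_2|^p}{L^{q/p}}^{1/p}$, which makes sense since $q/p\ge 1$. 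Now comes the point where the hypotheses enter: because $\Phi _j$ is $p$-steered, $|H_j|^p$ lies in the Orlicz space whose Young function is $t\mapsto \Phi _j(t^{1/p})$ (with essential inverse $s\mapsto (\Phi _j^{-\&}(s))^p$ and with $\nm{|H_j|^p}{}=\nm{H_j}{L^{\Phi _j}}^p$), and Young's inequality for Orlicz spaces (see \cite{RaoRen1}) bounds $\nm{|H_1|^p\star |H_2|^p}{L^{q/p}}$ by the product of these norms precisely when $\big(\Phi _1^{-\&}(s)\,\Phi _2^{-\&}(s)\big)^p\lesssim s^{\,1+p/q}$, i.e.\ exactly the first inequality in \eqref{Eq:WignerDistYoungFuncCond2}; the lower bounds \eqref{Eq:WignerDistYoungFuncCond1} guarantee that these Orlicz spaces are large enough for the convolution to be defined and the inequality to hold. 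This gives the inner double norm $\lesssim \nm{G_1(\cdot,v_1)}{L^{\Phi _1}}\,\nm{G_2(\cdot,v_2)}{L^{\Phi _2}}$ with $v_1=\xi +(I-A^*)\eta $, $v_2=\xi -A^*\eta $. Repeating the argument on the remaining two layers, with the substitution $v_1=\xi +(I-A^*)\eta $ (so that $v_2=v_1-\eta $) and the functions $v\mapsto \nm{G_j(\cdot,v)}{L^{\Phi _j}}\in L^{\Psi _j}(\rr d)$, using the $\Psi $-parts of \eqref{Eq:WignerDistYoungFuncCond1}, \eqref{Eq:WignerDistYoungFuncCond2}, leads to $\nm{W^A_{f_1,f_2}}{M^{p,q}_{(\omega )}}\lesssim \nm{G_1}{L^{\Phi _1,\Psi _1}}\nm{G_2}{L^{\Phi _2,\Psi _2}}=\nm{f_1}{M^{\Phi _1,\Psi _1}_{(\omega _1)}}\nm{f_2}{M^{\Phi _2,\Psi _2}_{(\omega _2)}}$, as claimed. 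The case $p=\infty $ forces $q=\infty $, where $L^{p,q}=L^\infty $ and the pointwise bound gives directly $\nm{W^A_{f_1,f_2}}{M^\infty _{(\omega )}}\le \nm{G_1}{L^\infty }\nm{G_2}{L^\infty }\lesssim \nm{f_1}{M^{\Phi _1,\Psi _1}_{(\omega _1)}}\nm{f_2}{M^{\Phi _2,\Psi _2}_{(\omega _2)}}$.

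\smallskip

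\emph{Main obstacle.} The hard point is extracting the ``convolution gain'' in $(y,\eta )$: applying Hölder's inequality for Orlicz spaces (Proposition \ref{thm-Holder}) to the product $G_1(u)G_2(u+y)$ for fixed $y$ produces a bound independent of $y$, which is worthless after integrating over $\rr d$. What resolves this is the interplay of the $p$-steered hypothesis --- which lifts the $L^{\Phi _j}$-membership of the STFT-slices to membership in Orlicz spaces admitting a convolution (Young) inequality --- with the observation that the product condition \eqref{Eq:WignerDistYoungFuncCond2} is exactly the Young-inequality condition for those spaces; I also expect the bookkeeping of the two alternatives in Definition \ref{Def:SteeredFunc} and of the near-origin reductions to require some care.
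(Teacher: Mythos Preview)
Your approach is essentially the paper's: the same pointwise STFT identity for $W^A_{f_1,f_2}$, the same Minkowski swap of the $\xi$- and $y$-layers (since $p\le q$), and the same reduction to Young's convolution inequality for the Orlicz spaces with Young functions $t\mapsto\Phi_j(t^{1/p})$, $t\mapsto\Psi_j(t^{1/p})$ (the paper's Lemma~\ref{Lemma:YoungIneq}). The paper first reduces to $A=\tfrac12 I$ via Lemma~\ref{Lemma:WignerDistTransfers}, while you keep $A$ general; both are fine.

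There is one genuine gap. Your convolution step presumes alternative~(2) of Definition~\ref{Def:SteeredFunc}, i.e.\ that $t\mapsto\Phi_j(t^{1/p})$ and $t\mapsto\Psi_j(t^{1/p})$ are Young functions near the origin, so that $|H_j|^p$ lands in a genuine Orlicz space to which Lemma~\ref{Lemma:YoungIneq} applies. You flag alternative~(1) as needing ``some care'' but do not supply the argument; this is not mere bookkeeping. The paper closes this case separately: if, say, $t\mapsto\Phi_1(t^{1/p})$ fails to be a Young function near~$0$, then $\Phi_1^{-\&}(s)\lesssim s^{1/p}$ near~$0$, forcing $M^{\Phi_1,\Psi_1}_{(\omega_1)}\hookrightarrow M^{p,\Psi_1}_{(\omega_1)}$, while \eqref{Eq:WignerDistYoungFuncCond1} gives $M^{\Phi_2,\Psi_2}_{(\omega_2)}\hookrightarrow M^{q,\Psi_2}_{(\omega_2)}$; the already-proved case (with $\Phi_1=\Phi_{[p]}$, $\Phi_2=\Phi_{[q]}$) then yields the conclusion. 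Analogous embeddings handle the remaining combinations. You should add this reduction explicitly. A minor point: when $q=\infty$ the target of the convolution estimate is $L^\infty$, and Lemma~\ref{Lemma:YoungIneq} is replaced by H{\"o}lder's inequality (Proposition~\ref{thm-Holder}); the paper treats this as a separate sub-case.
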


\par

\begin{rem}\label{Rem:PowerArgumOrlicz}
Suppose that $p,q\in (0,\infty ]$
satisfy $p\le q$, similarly as in
Theorem \ref{Thm:WignerDistOrlicModSpace},
let $\Phi :[0,\infty ] \to [0,\infty ]$
and let $\Phi _{[q]}(t)=\Phi (t^{\frac 1q})$.
Then the following is true:
\begin{enumerate}
\item if $q<\infty$ and
$\Phi _{[q]}$
is a Young function, then
$\Phi (t)\lesssim t^q$ near origin
and $\Phi _{[p]}$ is a Young function;

\vrum

\item if $\Phi _{[q]}$ is a Young function
which satisfies the $\Delta _2$-condition,
then $\Phi _{[p]}$ is a Young function
which satisfies the $\Delta _2$-condition.

\vrum

\item Suppose that $p<\infty$, and
$\Phi _j,\Psi _j: [0,\infty ]\to [0,\infty ]$
are such that $t\mapsto \Phi _j(t^{\frac 1p})$
and $t\mapsto \Psi _j(t^{\frac 1p})$ are Young
functions, $j=1,2$, and that
\eqref{Eq:WignerDistYoungFuncCond2} holds.
Then \eqref{Eq:WignerDistYoungFuncCond1}
holds. In particular, for some
$r_1,r_2>0$ it holds
\begin{alignat}{2}
t^q
&\lesssim
\Phi _j(t),\Psi _j(t)
\lesssim
t^p, &
\quad
t&\in [0,r_1],
\intertext{or equivalently,}
s^{\frac 1p}
&\lesssim
\Phi _j^{-\&}(s),\Psi _j^{-\&}(s)
\lesssim
s^{\frac 1q}, &
\quad
s&\in [0,r_2].
\end{alignat}
\end{enumerate}
\end{rem}

\par

For $q=\infty$ we observe the following consequence
of Theorem \ref{Thm:WignerDistOrlicModSpace},
for extreme choices of $\Phi _j$ or $\Psi _j$,
for some $j=1,2$.

\par

\begin{cor}\label{Cor:WignerDistOrlicModSpaceLargeq}
Let $A\in \GL (d,\mathbf R)$, $p\in [1,\infty )$
be such that $p\le q$,
and let $\Phi _j$ and $\Psi _j$ be such that
$t\mapsto \Phi _j(t^{\frac 1p})$ and
$t\mapsto \Psi _j(t^{\frac 1p})$ are Young functions 
which fulfill the $\Delta _2$-condition, $j=1,2$,
and such that
\begin{equation}\label{Eq:WignerDistYoungFuncCondLargeq}
\Phi _1^{-1}(s)\Phi _2^{-1}(s)
\le
s^{\frac 1p}
\quad \text{and}\quad
\Psi _1^{-1}(s)\Psi _2^{-1}(s)
\le
s^{\frac 1p}.
\end{equation}
Also let $\omega \in \mascP _E(\rr {4d})$ and
$\omega _1,\omega _2\in \mascP _E(\rr {2d})$
be such that \eqref{Eq:WeightCond1} holds.
Then the map $(f_1,f_2)\mapsto W_{f_1,f_2}^A$
from $\Sigma _1'(\rr d)
\times \Sigma _1'(\rr d)$ to $\Sigma _1'(\rr {2d})$
restricts to a continuous
map from 
\begin{alignat*}{2}
M^{p,\Psi _1}_{(\omega _1)}(\rr d)
&\times
M^{\infty ,\Psi _2}_{(\omega _2)}(\rr d) ,&
\quad
M^{\infty,\Psi _1}_{(\omega _1)}(\rr d)
&\times
M^{p,\Psi _2}_{(\omega _2)}(\rr d) ,
\\[1ex]
M^{\Phi _1,p}_{(\omega _1)}(\rr d)
&\times
M^{\Phi _2,\infty}_{(\omega _2)}(\rr d) &
\quad \text{or}\quad
M^{\Phi _1,\infty}_{(\omega _1)}(\rr d)
&\times
M^{\Phi _2,p}_{(\omega _2)}(\rr d) ,
\end{alignat*}
to $M^{p,\infty}_{(\omega )}(\rr {2d})$.
% , and
% %%
% \begin{equation}
% \nm {W_{f_1,f_2}^A}{M^{p,q}_{(\omega )}}
% \lesssim
% \nm {f_1}{M^{\Phi _1,\Psi _1}_{(\omega _1)}}
% \nm {f_2}{M^{\Phi _2,\Psi _2}_{(\omega _2)}},
% \quad f_j\in M^{\Phi _j,\Psi _j}_{(\omega _j)}(\rr d),
% \ j=1,2.
% \end{equation}
% %%
\end{cor}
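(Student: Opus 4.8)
The plan is to obtain Corollary \ref{Cor:WignerDistOrlicModSpaceLargeq} as the degenerate case $q=\infty$ of Theorem \ref{Thm:WignerDistOrlicModSpace}, in which two of the four Young functions occurring in a pair $M^{\cdot ,\cdot}_{(\omega _1)}(\rr d)\times M^{\cdot ,\cdot}_{(\omega _2)}(\rr d)$ are specialised to the Lebesgue-type Young functions $\Phi _{[p]}$ and $\Phi _{[\infty ]}$ of Remark \ref{Rem:PhiLeb}. For the first of the four asserted mappings I would apply Theorem \ref{Thm:WignerDistOrlicModSpace} with $q=\infty$, first factor $M^{\Phi _{[p]},\Psi _1}_{(\omega _1)}=M^{p,\Psi _1}_{(\omega _1)}$, second factor $M^{\Phi _{[\infty ]},\Psi _2}_{(\omega _2)}=M^{\infty ,\Psi _2}_{(\omega _2)}$ (recall \eqref{Eq:ModSpOrlModSp2}), and target $M^{p,\infty}_{(\omega )}$. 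The remaining three mappings follow by the symmetric substitutions, interchanging the roles of the two exponents and/or of the two factors, so that in each case one slot of each factor carries $\Phi _{[p]}$, the opposite slot carries $\Phi _{[\infty ]}$, and the two remaining slots carry the pair $\Psi _1,\Psi _2$ (first two mappings) or $\Phi _1,\Phi _2$ (last two, cf.\ \eqref{Eq:ModSpOrlModSp3}). The weight hypothesis \eqref{Eq:WeightCond1} is identical in the theorem and in the corollary, so it transfers verbatim.

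The bulk of the work is then to check that these choices satisfy the structural hypotheses of Theorem \ref{Thm:WignerDistOrlicModSpace} in the regime $p<\infty$. First I would observe that $\Phi _{[p]}(t^{1/p})=t/p$ is a Young function satisfying the $\Delta _2$-condition, so $\Phi _{[p]}$ is $p$-steered in the sense of Definition \ref{Def:SteeredFunc}\,(2) and itself satisfies the $\Delta _2$-condition; and that $\Phi _{[\infty ]}(t^{1/p})=\Phi _{[\infty ]}(t)$ vanishes on $[0,1]$, hence agrees near the origin with a genuine Young function such as $t\mapsto (t-1)_+$, so $\Phi _{[\infty ]}$ is $p$-steered and trivially fulfils a local $\Delta _2$-condition on $[0,\tfrac 12]$. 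Next, the corollary's assumption that $t\mapsto \Phi _j(t^{1/p})$ and $t\mapsto \Psi _j(t^{1/p})$ are $\Delta _2$ Young functions says exactly that $\Phi _j$ and $\Psi _j$ are $p$-steered; writing $\Phi _j(t)=\Phi _{j,0}(t^p)$ with $\Phi _{j,0}$ a $\Delta _2$ Young function and using $p\ge 1$, a routine finite iteration of the $\Delta _2$-estimate shows that $\Phi _j$ (and likewise $\Psi _j$) is itself a $\Delta _2$ Young function, in particular it fulfils a local $\Delta _2$-condition. Thus in every one of the four applications all four slot functions entering Theorem \ref{Thm:WignerDistOrlicModSpace} are $p$-steered Young functions with a local $\Delta _2$-condition.

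Finally I would verify the growth and inverse conditions \eqref{Eq:WignerDistYoungFuncCond1}--\eqref{Eq:WignerDistYoungFuncCond2} in the limit $q=\infty$. Since $t^q\to 0$ near the origin, \eqref{Eq:WignerDistYoungFuncCond1} is void on a sufficiently small interval $[0,r]$, while \eqref{Eq:WignerDistYoungFuncCond2} reduces to $\Phi _1^{-\&}(s)\Phi _2^{-\&}(s)\lesssim s^{1/p}$ together with the analogous estimate for the $\Psi$'s, because $\tfrac 1p+\tfrac 1q=\tfrac 1p$. For the pair of slots carrying $\Phi _{[p]}$ and $\Phi _{[\infty ]}$ this is automatic: by Example \ref{Example:SpecYoungFunc} one has $\Phi _{[p]}^{-\&}(s)=(ps)^{1/p}$ and $\Phi _{[\infty ]}^{-\&}(s)=1$ for $s>0$, so their product is $\asymp s^{1/p}$. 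For the pair of slots carrying $\Phi _1,\Phi _2$ (respectively $\Psi _1,\Psi _2$) the required estimate is precisely the hypothesis \eqref{Eq:WignerDistYoungFuncCondLargeq}, with $\Phi ^{-1}$ read as the essential inverse $\Phi ^{-\&}$ whenever the relevant function fails to be injective. Applying Theorem \ref{Thm:WignerDistOrlicModSpace} once for each of the four substitutions then yields the four continuity statements together with the corresponding norm estimates. The \emph{main obstacle} here is purely one of bookkeeping: one must make consistent sense of the hypotheses of Theorem \ref{Thm:WignerDistOrlicModSpace} in the degenerate case $q=\infty$ combined with the extreme Young functions $\Phi _{[p]}$ and $\Phi _{[\infty ]}$, after which no analytic input beyond that theorem is needed.
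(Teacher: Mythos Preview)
Your proposal is correct and follows exactly the approach the paper indicates: the corollary is obtained from Theorem \ref{Thm:WignerDistOrlicModSpace} by taking $q=\infty$ and specialising, in each of the four cases, one slot in each factor to the extreme Young functions $\Phi _{[p]}$ and $\Phi _{[\infty ]}$. The paper offers no further detail beyond the sentence ``For $q=\infty$ we observe the following consequence of Theorem \ref{Thm:WignerDistOrlicModSpace}, for extreme choices of $\Phi _j$ or $\Psi _j$,'' and your careful verification of the $p$-steered, local $\Delta _2$, and inverse-product hypotheses fills in precisely the bookkeeping that the paper leaves implicit.
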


\par

% \textcolor{blue}{Try to prove without weights first}. 

We need the following Young type results
for Orlicz spaces for the proof of Theorem
\ref{Thm:WignerDistOrlicModSpace}
% recall the following well-known Young's
% inequality for Orlicz spaces
(see Theorem 9 in \cite[III.3.3]{RaoRen1}).
% Under the stricter assumption \eqref{eq-Youngfuncs-equality},
% we give a short proof of it based on the interpolation of Orlicz
% spaces \cite[Theorem 5.5.1]{HarHas1}. 

\par

\begin{lemma}\label{Lemma:YoungIneq}
Let $\Phi _j,j=0,1,2$, be Young functions 
which fulfill the $\Delta _2$-condition
and such that 
\begin{equation}\label{eq-Youngfuncs-equality}
 \Phi _1^{-1}(s) \cdot \Phi _2^{-1}(s)
\leq 
s\Phi _0^{-1}(s), \qquad s \geq 0.   
\end{equation}
Then the convolution map $(f_1,f_2)\mapsto f_1*f_2$
%from $\Sigma _1(\rr d)\times \Sigma _1(\rr d)$ to
%$\Sigma _1'(\rr d)$ is uniquely extendable to a continuous map 
from
$L^{\Phi _1}(\rr d)\times L^{\Phi _2}(\rr d)$
to $L^{\Phi _0}(\rr d)$ is continuous and
\begin{equation}\label{eq-YoungIneq}
\nm {f_1*f_2}{L^{\Phi _0}}
\le 2 \nm {f_1}{L^{\Phi _1}}
\nm {f_2}{L^{\Phi _2}},
\quad \forall f_j\in L^{\Phi _j}(\rr d),\ j=1,2.
\end{equation}
\end{lemma}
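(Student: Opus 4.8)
Since the conclusion is, up to notation, Theorem~9 in \cite[III.3.3]{RaoRen1}, one option is to invoke that result directly; the role of the $\Delta_2$-condition on all three functions is to make simple functions dense in each $L^{\Phi_j}(\rd)$ --- so that $f_1*f_2$ is meaningful for $f_j\in L^{\Phi_j}(\rd)$ and the bilinear map extends uniquely from $S(\mu)\times S(\mu)$, exactly as in Proposition~\ref{thm-Holder} --- and to ensure that the normalised modular estimates $\int_{\rd}\Phi_j(f_j)\,dx\le1$ used below are available. For a self-contained argument, the plan is to mimic the classical proof of Young's convolution inequality, with the H\"older inequality of Proposition~\ref{thm-Holder} for Orlicz spaces in place of the ordinary H\"older inequality.

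First I would reduce to the essential case. Splitting $f_1,f_2$ into real, imaginary, positive and negative parts and using homogeneity of the three Luxemburg norms, it suffices to treat $f_1,f_2\ge0$ normalised so that $\nm{f_1}{L^{\Phi_1}}=\nm{f_2}{L^{\Phi_2}}=1$, whence $\int_{\rd}\Phi_j(f_j)\,dx\le1$ for $j=1,2$. It then suffices to show
\begin{equation*}
\int_{\rd}\Phi_0\Bigl(\tfrac12(f_1*f_2)(x)\Bigr)\,dx\le 1,
\end{equation*}
which yields $\nm{f_1*f_2}{L^{\Phi_0}}\le2$.

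The heart of the matter is a pointwise-in-$x$ estimate for $(f_1*f_2)(x)=\int_{\rd}f_1(x-y)f_2(y)\,dy$. Writing $g_j:=\Phi_j(f_j)$, so that $f_j=\Phi_j^{-1}(g_j)$ and $\nm{g_j}{L^1}\le1$, I would factor the integrand $f_1(x-y)f_2(y)$ into three nonnegative pieces --- a ``mixed'' piece carrying the joint dependence on $x-y$ and $y$, one piece depending on $x-y$ only, and one depending on $y$ only --- and then apply Proposition~\ref{thm-Holder} in the variable $y$. The hypothesis $\Phi_1^{-1}(s)\Phi_2^{-1}(s)\le s\,\Phi_0^{-1}(s)$ is exactly what is needed to dominate the mixed piece by $\Phi_0^{-1}$ of an $L^1$-type combination of $g_1(x-y)$ and $g_2(y)$; applying the convex function $\Phi_0$ to $\tfrac12(f_1*f_2)(x)$, pulling it inside the $y$-integral by Jensen's inequality, then integrating in $x$ and invoking Fubini, one is left with a bound of the shape $\tfrac12\int_{\rd}\Phi_1(f_1)\,dx+\tfrac12\int_{\rd}\Phi_2(f_2)\,dx\le1$; the constant $2$ in \eqref{eq-YoungIneq} arises from the H\"older constant of Proposition~\ref{thm-Holder} together with the convex averaging. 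Density of simple functions then promotes the estimate to all $f_j\in L^{\Phi_j}(\rd)$ and gives the asserted continuity. As an alternative to the direct argument, the same conclusion can be reached by interpolating the two elementary endpoint inclusions $L^1*L^{\Phi_2}\hookrightarrow L^{\Phi_2}$ (Minkowski's integral inequality and translation invariance of the Orlicz norm) and $L^{\Phi_2^*}*L^{\Phi_2}\hookrightarrow L^{\infty}$ (H\"older's inequality), since $\Phi_1^{-1}(s)\Phi_2^{-1}(s)\le s\,\Phi_0^{-1}(s)$ is precisely the relation matching the Calder\'on products of these couples.

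The step I expect to be the main obstacle is the factorisation. In the Lebesgue case one simply writes $f_1(x-y)^{\theta_1}f_2(y)^{\theta_2}$ and uses the homogeneity of power functions; general Young functions are not homogeneous, so one must route everything through the auxiliary $L^1$-functions $g_j=\Phi_j(f_j)$ and the essential inverses $\Phi_j^{-1}$, and then reconcile the single-variable hypothesis --- which compares $\Phi_1^{-1},\Phi_2^{-1},\Phi_0^{-1}$ at a common argument $s$ --- with the fact that $g_1(x-y)$ and $g_2(y)$ vary independently. Carrying this out with the correct constant calls for the concavity and subadditivity of the inverses $\Phi_j^{-1}$ (valid because each $\Phi_j$ is convex with $\Phi_j(0)=0$), possibly combined with a decomposition of the $y$-integration domain according to the relative size of $g_1(x-y)$ and $g_2(y)$; this is the only genuinely delicate point, the remainder being routine.
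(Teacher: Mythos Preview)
The paper does not prove this lemma at all: it simply cites Theorem~9 in \cite[III.3.3]{RaoRen1}, which you correctly identify in your opening sentence. Your additional self-contained sketch goes beyond what the paper does; it is a reasonable outline of the O'Neil-type argument behind that theorem, and you are honest about the one genuinely delicate step (reconciling the single-variable hypothesis with the independent variation of $g_1(x-y)$ and $g_2(y)$), so there is no gap relative to the paper's treatment.
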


%$$
%\nm {\int}B
%\qquad
%\NM {\int}B
%\rr d \quad \rr {3d}\quad \cc d
%\sets {\op (a)}{a\in \mascS\ \maclS}
%$$

\par

% \noindent
% {\textbf{Observation:}} The complex interpolation
% result in H{\"a}st{\"o} and Harjulehto's paper does NOT
% have the restriction that the Young functions neeed
% to obey the $\Delta _2$ condition. Theorem 2.2 should
% therefore be true also without this condition.

\par

\par

\begin{proof}[Proof of Theorem 
\ref{Thm:WignerDistOrlicModSpace}]
First suppose $p=\infty$. Then it follows from e.{\,}g.
\cite[Proposition 2.4]{Toft15B}
that $(f_1,f_2)\mapsto W_{f_1,f_2}^A$
is continuous from
$M^\infty _{(\omega _1)}(\rr d)
\times
M^\infty _{(\omega _1)}(\rr d)$ to
$M^\infty _{(\omega )}(\rr {2d})$. The result
now follows from the facts that $q\ge p=\infty$
and
$M^{\Phi _j,\Psi _j}_{(\omega _j)}(\rr d)$
are continuously embedded in
$M^\infty _{(\omega _j)}(\rr d)$, $j=1,2$,
in view of Proposition
\ref{Prop:OrliczModInvariance}.

\par

It remains to consider the case when
$p<\infty$.
Since $M^{\Phi _j,\Psi _j}_{(\omega _j)}(\rr d)$
only depends on $\Phi _j$ and $\Psi _j$ near origin, 
in view of \cite[Proposition 5.9]{ToUsNaOz}, we
may replace these Young functions with new ones such 
that \eqref{Eq:WignerDistYoungFuncCond2}
holds for all $s\in [0,\infty ]$ (see
Definition \ref{Def:EssInv}).
Furthermore, by (1.10) and Lemma 2.6 in 
\cite{Toft15B}, it follows that we may
reduce ourselves to the case when
$A=\frac 12 I$, giving the standard
(cross-)Wigner distribution.

\par

First we consider the case when
$t\mapsto \Phi _j(t^{\frac 1p})$
and $t\mapsto \Psi _j(t^{\frac 1p})$
are Young functions. 
As a first step on this we also assume
that $q<\infty$, giving that
$p<\infty$.
% The case when $q=\infty$ follows by
% similar arguments and is left for the reader.

\par

Let
$$
F=W_{f_1,f_2}
\quad \text{and}\quad
\psi =W_{\phi _1,\phi _2}.
$$
Then \cite[Lemma 2.6]{Toft15B} gives
$$
|V_\psi F(x,\xi ,\eta ,y)|
=
|V_{\phi _1}f_1(x-{\textstyle{\frac 12}}y,
\xi +{\textstyle{\frac 12}}\eta )|\cdot
|V_{\phi _2}f_2(x+{\textstyle{\frac 12}}y,
\xi -{\textstyle{\frac 12}}\eta )|.
$$
Hence, if
\begin{align*}
G(x,\xi ,\eta ,y)
&=
|V_\Phi F(x,\xi ,\eta ,y)\omega (x,\xi ,\eta ,y)|,
\\[1ex]
G_1(x,\xi )
&=
|V_{\phi _1} f_1(-x,\xi )\omega _1(-x,\xi )|
\intertext{and}
G_2(x,\xi )
&=
|V_{\phi _2} f_2(x,-\xi )\omega _2(x,-\xi )|
\end{align*}
then it follows from the assumptions that
\begin{equation}\label{Eq:GFunctionsRel}
0\le G(x,\xi ,\eta ,y)
\lesssim
G_1({\textstyle{\frac 12}}y-x,
{\textstyle{\frac 12}}\eta +\xi )
\cdot
G_2({\textstyle{\frac 12}}y+x,
{\textstyle{\frac 12}}\eta -\xi ).
\end{equation}

\par

By first applying the $L^p$-norm on the
$x$ and $\xi$ variables, and then
the $L^q$ norm on the $y$ variable we obtain
\begin{equation}\label{Eq:STFTBoundByR}
\nm {H_0(\eta ,\cdo )}{L^q(\rr d)}
\lesssim R(\eta ),
\qquad
H_0 (\eta ,y)=\nm {G(\cdo ,\eta ,y)}{L^p(\rr {2d})},
%
% \left ( \int _{\rr d}\nm {G(\cdo ,\eta ,y)}{L^p(\rr {2d})}^q
% \, dy \right )^{\frac 1q}
% \lesssim R(\eta ),
\end{equation}
where
$$
R(\eta )
\equiv
\left ( \int \left ( \iint
G_1({\textstyle{\frac 12}}y-x,
{\textstyle{\frac 12}}\eta +\xi )^p
G_2({\textstyle{\frac 12}}y+x,
{\textstyle{\frac 12}}\eta -\xi )^p
% G_1(x-{\textstyle{\frac 12}}y,
% \xi +{\textstyle{\frac 12}}\eta )^p
% G_2(x+{\textstyle{\frac 12}}y,
% \xi -{\textstyle{\frac 12}}\eta )^p
\, dxd\xi 
\right )^{\frac qp}\, dy \right )^{\frac 1q}.
$$
We need to estimate $R(\eta )$ in suitable ways.

\par

By taking $x+\frac 12 y$, $\xi -\frac 12\eta$
and $y$ as new variables of integrations,
and using Minkowski's inequality, we obtain 
\begin{align}
R(\eta )
&=
\left ( \int \left ( \iint
G_1(y-x,\eta -\xi )^p
G_2(x,\xi )^p
\, dxd\xi 
\right )^{\frac qp}\, dy \right )^{\frac 1q}
\notag
\\[1ex]
&\le
\left ( \int \left ( \int \left ( \int
G_1(y-x,\eta -\xi )^p
G_2(x,\xi )^p
\, dx \right )^{\frac qp}\, dy \right )^{\frac pq}
\, d\xi  \right )^{\frac 1p}
\notag
\\[1ex]
&=
\left ( \int \left ( 
\nm {G_1(\cdo ,\eta -\xi )^p*
G_2(\cdo ,\xi )^p}{L^{\frac qp}}\right )
\, d\xi  \right )^{\frac 1p}.
\label{Eq:MinkowskiSTFTEst}
\end{align}
%%
%where $\check G_1(x,\xi )= G_1(-x,-\xi )$.

\par

Now recall that if
$$
\widetilde \Phi _j(t)
=\Phi _j(t^{\frac 1p})
\quad \text{and}\quad
\widetilde \Psi _j(t)
=\Psi _j(t^{\frac 1p}),
\quad
j=1,2,
$$
then, since $\Phi _j$ and $\Psi _j$
for $j=1,2$ are $p$-steered, 
$\widetilde \Phi _j(t)$ and
$\widetilde \Psi _j(t)$ are Young functions
such that
$$
\widetilde \Phi _1^{-1}(s)
\widetilde \Phi _2^{-1}(s)
\le s^{\frac pq+1}
\quad \text{and}\quad
\widetilde \Psi _1^{-1}(s)
\widetilde \Psi _2^{-1}(s)
\le s^{\frac pq+1}.
$$
Hence Lemma \ref{Lemma:YoungIneq} gives
\begin{equation}\label{Eq:ConvSTFTIneq}
\nm {G_1(\cdo ,\eta -\xi )^p*
G_2(\cdo ,\xi )^p}{L^{\frac qp}}
\lesssim
H_1(\eta -\xi )^pH_2(\xi )^p,
\end{equation}
where
\begin{equation}\label{Eq:HjDef}
H_j(\xi )
\equiv
\nm {G_j(\cdo ,\xi )^p}
{L^{\widetilde \Phi _1}}^{\frac 1p},\quad
j=1,2 .
\end{equation}
%%
% \coma{\begin{equation}%\label{Eq:HjDef}
% H_1(\xi )
% \equiv
% \nm {\check G_1(\cdo ,\xi )^p}
% {L^{\widetilde \Phi _1}}^{\frac 1p},  \qquad H_2(\xi )
% \equiv
% \nm {G_2(\cdo ,\xi )^p}
% {L^{\widetilde \Phi _2}}^{\frac 1p}. 
% \end{equation}}

\par

By combining \eqref{Eq:MinkowskiSTFTEst} with 
\eqref{Eq:ConvSTFTIneq} we obtain
$$
R(\eta )
\lesssim
((H_1^p*H_2^p)(\eta ))^{\frac 1p}.
$$
By applying the $L^q$ norm, and using that
$\nm F{M^{p,q}_{(\omega )}}\lesssim \nm R{L^q}$, due to
\eqref{Eq:STFTBoundByR} and Lemma
\ref{Lemma:YoungIneq} we get
\begin{align*}
\nm F{M^{p,q}_{(\omega )}}
&\lesssim
\nm R{L^q}
\lesssim
\nm {H_1^p*H_2^p}{L^{q/p}}^{\frac 1p}
%\\[1ex]
\lesssim
\left (\nm {H_1^p}{L^{\widetilde \Psi _1}}
\nm {H_2^p}{L^{\widetilde \Psi _2}}\right )^{\frac 1p}
\\[1ex]
&=
\left (\nm {G_1^p}
{L^{\widetilde \Phi _1,\widetilde \Psi _1}}
\nm {G_2^p}
{L^{\widetilde \Phi _2,\widetilde \Psi _2}}
\right )^{\frac 1p}
%\\[1ex]
= 
\nm {G_1}
{L^{\Phi _1,\Psi _1}}
\nm {G_2}
{L^{\Phi _2,\Psi _2}}
\asymp
\nm {f_1}{M^{\Phi _1,\Psi _1}_{(\omega _1)}}
\nm {f_2}{M^{\Phi _2,\Psi _2}_{(\omega _2)}},
\end{align*}
giving the result in the case $q<\infty$.

\par

Next suppose that $q=\infty$.
By first applying the $L^p$-norm on the
$x$ and $\xi$ variables, and then
the $L^\infty$ norm
on the $y$ variable in
\eqref{Eq:GFunctionsRel} we obtain
\begin{equation}\tag*{(\ref{Eq:STFTBoundByR})$'$}
\sup _{y\in \rr d}
\left ( \nm {G(\cdo ,\eta ,y)}{L^p(\rr {2d})}
\right )
\lesssim R(\eta ),
\end{equation}
where $R(\eta )$ is now redefined as
$$
R(\eta )
\equiv
\sup _{y\in \rr d}
\left ( \iint
G_1({\textstyle{\frac 12}}y-x,
\xi +{\textstyle{\frac 12}}\eta )^p
G_2(x+{\textstyle{\frac 12}}y,
{\textstyle{\frac 12}}\eta -\xi )^p
\, dxd\xi 
\right )^{\frac 1p}.
$$
By \eqref{Eq:MinkowskiSTFTEst} we have
\begin{align}
R(\eta )
&\le
\left ( \int 
\nm {G_1(\cdo ,\eta -\xi )^p*
G_2(\cdo ,\xi )^p}{L^\infty}
\, d\xi  \right )^{\frac 1p}
\tag*{(\ref{Eq:MinkowskiSTFTEst})$'$}
\end{align}

\par

We have
$$
\widetilde \Psi _1^{-1}(s)\widetilde \Psi _2^{-1}(s)
\le s,
$$
and by H{\"o}lder's inequality we obtain
\begin{equation}\tag*{(\ref{Eq:ConvSTFTIneq})$'$}
\nm {G_1(\cdo ,\eta -\xi )^p*
G_2(\cdo ,\xi )^p}{L^\infty}
\lesssim
H_1(\eta -\xi )^pH_2(\xi )^p,
\end{equation}
where $H_j$ are the same as in \eqref{Eq:HjDef}.
% $$
% H_1(\xi )
% =
% \nm {G_j(\cdo ,\xi )^p}
% {L^1}^{\frac 1p}
% \quad \text{and}\quad
% H_2(\xi )
% =
% \nm {G_j(\cdo ,\xi )}
% {L^\infty}.
% $$

\par

A combination of \eqref{Eq:MinkowskiSTFTEst}$'$ and
\eqref{Eq:ConvSTFTIneq}$'$ gives
$$
R(\eta )
\lesssim
(H_1^p*H_2^p)(\eta )^{\frac 1p}.
$$
By applying the $L^\infty$ norm, and using that
$\nm F{M^{p,\infty}_{(\omega )}}\lesssim \nm R{L^\infty}$,
due to \eqref{Eq:STFTBoundByR}$'$, we obtain
\begin{align*}
\nm F{M^{p,\infty}_{(\omega )}}
&\lesssim
\nm R{L^\infty}
\lesssim
\nm {H_1^p*H_2^p}{L^\infty}
\\[1ex]
&\lesssim
\left (\nm {H_1^p}{L^{\widetilde \Psi _1}}
\nm {H_2^p}{L^{\widetilde \Psi _2}}\right )^{\frac 1p}
=
\left (\nm {G_1^p}
{L^{\widetilde \Phi _1,\widetilde \Psi _1}}
\nm {G_2^p}
{L^{\widetilde \Phi _2,\widetilde \Psi _2}}
\right )^{\frac 1p}
\\[1ex]
&=
\nm {G_1}
{L^{\Phi _1,\Psi _1}}
\nm {G_2}
{L^{\Phi _2,\Psi _2}}
\asymp
\nm {f_1}{M^{\Phi _1,\Psi _1}_{(\omega _1)}}
\nm {f_2}{M^{\Phi _2,\Psi _2}_{(\omega _2)}},
\end{align*}
giving the result in the case when
$t\mapsto \Phi _j(t^{\frac 1p})$
and $t\mapsto \Psi _j(t^{\frac 1p})$
are Young functions.

\par

It remains to consider the case when
$t\mapsto \Phi _j(t^{\frac 1p})$ or
$t\mapsto \Psi _k(t^{\frac 1p})$ are
not Young functions
for some $j=1,2$ and some $k=1,2$.
We shall here mainly use similar arguments
as in the proof of Theorem 1.1 in
\cite{CorNic}.
Then
$$
\Phi _j^{-\&}(s)\lesssim s^{\frac 1p}
\quad \text{or}\quad
\Psi _k^{-\&}(s)\lesssim s^{\frac 1p}
$$
near origin,
for some $j=1,2$ and some $k=1,2$.
First suppose that $\Phi _1^{-\&}(s)\lesssim s^{\frac 1p}$,
and that $\Psi _j(t^{\frac 1p})$ are Young
functions, $j=1,2$. Then
$$
\Phi _1^{-\&}(s)s^{\frac 1q}
\lesssim
s^{\frac 1p+\frac 1q},
$$
and
\begin{equation}\label{Eq:EmbOrlModInProof}
M^{\Phi _1,\Psi _1}_{(\omega _1)}(\rr d)
\subseteq
M^{p,\Psi _1}_{(\omega _1)}(\rr d)
\quad \text{and}\quad
M^{\Phi _2,\Psi _2}_{(\omega _2)}(\rr d)
\subseteq
M^{q,\Psi _2}_{(\omega _2)}(\rr d),
\end{equation}
where the last embedding follows from
\eqref {Eq:WignerDistYoungFuncCond1}.
By the previous part of the proof
we have that $(f_1,f_2)\mapsto W_{f_1,f_2}^A$
is continuous from
$M^{p,\Psi _1}_{(\omega _1)}(\rr d)
\times
M^{q,\Psi _2}_{(\omega _2)}(\rr d)$
to $M^{p,q}_{(\omega )}(\rr {2d})$.
The result now follows in this case
by combining the latter continuity
property with the embeddings in
\eqref{Eq:EmbOrlModInProof}.

\par

By similar arguments, the same conclusion holds 
true if instead
$$
\Phi _2^{-\&}(s)\lesssim s^{\frac 1p},
\quad
\Psi _1^{-\&}(s)\lesssim s^{\frac 1p}
\quad \text{or}\quad
\Psi _2^{-\&}(s)\lesssim s^{\frac 1p}.
$$
The details are left for the reader.

\par

Finally suppose that
$$
\Phi _j^{-\&}(s)\lesssim s^{\frac 1p},
\quad
\quad \text{and}\quad
\Psi _k^{-\&}(s)\lesssim s^{\frac 1p},
$$
for some $j=1,2$ and some $k=1,2$.
Then the previous arguments lead to
\begin{equation}\label{Eq:EmbOrlModInProof2}
M^{\Phi _j,\Psi _j}_{(\omega _j)}(\rr d)
\subseteq
M^{p_j,q_j}_{(\omega _j)}(\rr d),
\end{equation}
for some $p_j,q_j\in \{ p,q\}$,
$j=1,2$, and such that $p_1\neq p_2$
and $q_1=q_2$. Again we have that
$(f_1,f_2)\mapsto W_{f_1,f_2}^A$
is continuous from
$M^{p_1,q_1}_{(\omega _1)}(\rr d)
\times
M^{p_2,q_2}_{(\omega _2)}(\rr d)$
to $M^{p,q}_{(\omega )}(\rr {2d})$.
The asserted continuity now follows
from \eqref{Eq:EmbOrlModInProof2},
and the result follows.
\end{proof}

\par

Beside the estimates for Wigner distributions
on Orlicz modulation spaces in Theorem
\ref{Thm:WignerDistOrlicModSpace}, we also have
the following result on estimates for
the short-time Fourier transform.
The result generalizes 
\cite[Proposition 3.3]{CorOko1}
which involves non-weighted
modulation spaces as well as
\cite[Proposition 2.2]{Toft8B}
which involves weighted
modulation spaces. (See Definition
\ref{Def:OrliczSpaces2} for broader spectrum
of Orlicz spaces.)

\par

\begin{thm}\label{Thm:CordOkdjP3.3}
Let $f_1,f_2\in \maclS _{1/2}'(\rr d)$,
$\Phi _j$ and $\Psi _j$ be quasi-Young functions
$j=1,2$,
$\omega _0\in \mathscr P_E(\rr {4d})$
and $\omega _1,\omega
_2\in \mathscr P_E(\rr {2d})$.
Also let $\phi _1,\phi _2\in
\maclS _{1/2}(\rr d)$, and let $\phi
=V_{\phi _1}\phi _2$. Then the following is true:
\begin{enumerate}
\item if
\begin{equation}\label{Eq:omegarel1}
\omega _0(x,\xi ,\eta ,-y)\le C\omega
_1(y-x,\eta ) \omega _2(y,\xi +\eta ),
\qquad x,y,\xi ,\eta \in \rr d,
\end{equation}
for some constant $C>0$, then
\begin{equation}\label{Eq:STFTOrlModEst1}
\nm {V_\phi (V_{f_1}f_2)}
{L^{\Phi_1,\Phi _2,\Psi _1,\Psi _2}_{(\omega _0)}}
\le C\nm
{V_{\phi _1}f_1}
{L^{\Phi _1,\Psi _1}_{(\omega _1)}}
\nm {V_{\phi _2}f_2}
{L^{\Psi _2,\Phi _2}_{*,(\omega _2)}}
\text ;
\end{equation}

\vrum

\item if
\begin{equation}\label{Eq:omegarel2}
\omega _1(y-x,\eta ) \omega _2(y,\xi +\eta )
\le
C\omega _0(x,\xi ,\eta ,-y),
\qquad x,y,\xi ,\eta \in \rr d,
\end{equation}
for some constant $C$, then
\begin{equation}\label{Eq:STFTOrlModEst2}
\nm {V_{\phi _1}f_1}
{L^{\Phi _1,\Psi _1}_{(\omega _1)}}
\nm {V_{\phi _2}f_2}
{L^{\Psi _2,\Phi _2}_{*,(\omega _2)}}
\le
C\nm {V_\phi (V_{f_1}f_2)}
{L^{\Phi _1,\Phi _2,\Psi _1,\Psi _2}_{(\omega _0)}}
\text ;
\end{equation}

\vrum

\item if \eqref{Eq:omegarel1} and \eqref{Eq:omegarel2}
hold for some constant $C$, then
$f_1\in M^{\Phi _1,\Psi _1}_{(\omega _1)}(\rr d)$
and
$f_2\in W^{\Psi _2,\Phi _2}_{(\omega _2)}(\rr d)$,
if and only if
$V_{f_1}f_2
\in
M^{\Phi _1,\Phi _2,\Psi _1,\Psi _2}_{(\omega _0)}
(\rr {2d})$,
and
\begin{equation}\label{Eq:STFTOrlModEst3}
\nm {V_{f_1}f_2}
{M^{\Phi _1,\Phi _2,\Psi _1,\Psi _2}_{(\omega _0)}}
\asymp
\nm{f_1}{M^{\Phi _1,\Psi _1}_{(\omega _1)}}
\nm {f_2}{W^{\Psi _2,\Phi _2}_{(\omega _2)}}.
\end{equation}
\end{enumerate}
\end{thm}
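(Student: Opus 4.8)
The whole argument rests on a single pointwise identity for the short-time Fourier transform of a short-time Fourier transform, together with careful bookkeeping of the iterated Orlicz norms; no deep estimate is involved once that identity is in place. First I would recall, as in \cite[Proposition 3.3]{CorOko1} and \cite[Proposition 2.2]{Toft8B}, the factorization
$$
|V_\phi (V_{f_1}f_2)(x,\xi ,\eta ,-y)|
=
|V_{\phi _1}f_1(y-x,\eta )|\cdot |V_{\phi _2}f_2(y,\xi +\eta )|,
\qquad \phi =V_{\phi _1}\phi _2.
$$
For $f_1,f_2\in \maclS _{1/2}(\rr d)$ this is a direct computation with the Fourier inversion formula, and it extends to $f_1,f_2\in \maclS _{1/2}'(\rr d)$ by the continuity and density properties of the STFT recalled in Section \ref{sec1}; note also that $\phi =V_{\phi _1}\phi _2\in \maclS _{1/2}(\rr {2d})$ and $\phi \neq 0$ whenever $\phi _1,\phi _2\neq 0$, so the left-hand side is a genuine STFT with an admissible Gelfand--Shilov window.

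For part (1), multiplying the identity by $\omega _0(x,\xi ,\eta ,-y)$ and invoking \eqref{Eq:omegarel1} gives the pointwise bound $\omega _0(x,\xi ,\eta ,-y)|V_\phi (V_{f_1}f_2)(x,\xi ,\eta ,-y)|\le C\, a(y-x,\eta )\, b(y,\xi +\eta )$, where $a=\omega _1|V_{\phi _1}f_1|$ and $b=\omega _2|V_{\phi _2}f_2|$. I would then apply the four-fold mixed Orlicz norm $\nm \cdo {L^{\Phi _1,\Phi _2,\Psi _1,\Psi _2}}$, integrating slice by slice in the order $x$ (with $\Phi _1$), then $\xi$ (with $\Phi _2$), then $\eta$ (with $\Psi _1$), then $y$ (with $\Psi _2$). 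The key structural point is that $x$ occurs only in the first factor and $\xi$ only in the second, so the $x$-integration collapses $a(y-x,\eta )$ to $\nm {a(\cdo ,\eta )}{L^{\Phi _1}}$ (translation invariance of unweighted Orlicz spaces), the $\xi$-integration collapses $b(y,\xi +\eta )$ to $\nm {b(y,\cdo )}{L^{\Phi _2}}$, and the remaining two integrations separate completely. By Definition \ref{Def:OrliczSpaces2} the first pair of integrations assembles into $\nm {f_1}{M^{\Phi _1,\Psi _1}_{(\omega _1)}}$ and the second into $\nm {f_2}{W^{\Psi _2,\Phi _2}_{(\omega _2)}}$, which is \eqref{Eq:STFTOrlModEst1}.

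The computation just described is in fact an identity for the mixed norm of the product $(x,\xi ,\eta ,y)\mapsto a(y-x,\eta )b(y,\xi +\eta )$, so reading it together with the reverse weight inequality \eqref{Eq:omegarel2} gives the lower bound \eqref{Eq:STFTOrlModEst2}, proving (2). When both \eqref{Eq:omegarel1} and \eqref{Eq:omegarel2} hold, combining \eqref{Eq:STFTOrlModEst1} and \eqref{Eq:STFTOrlModEst2} yields \eqref{Eq:STFTOrlModEst3}, and the membership equivalence in (3) follows because finiteness of either side forces finiteness of the other (here one uses that $\phi _1,\phi _2$, and hence $\phi$, are nonzero).

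The part I expect to need the most care is the bookkeeping in (1): one has to check that the four-fold iterated norm really splits as a product of an $L^{\Phi _1,\Psi _1}$ norm of $V_{\phi _1}f_1$ and a \emph{starred} norm $L^{\Psi _2,\Phi _2}_*$ of $V_{\phi _2}f_2$ — that is, that the transposition built into Definition \ref{Def:OrliczSpaces2}(2) matches the order in which the variables $\xi$ and $y$ carrying $\Phi _2$ and $\Psi _2$ get integrated — rather than some other combination of the four Young functions. Justifying the slice-by-slice manipulations and the magic formula for distributional $f_1,f_2$ (via density of $\maclS _{1/2}(\rr d)$ and continuity of all the maps involved) is the other point requiring attention; the rest is routine.
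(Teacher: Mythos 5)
Your proposal is correct and follows essentially the same route as the paper's proof: the same pointwise factorization identity $|V_\phi (V_{f_1}f_2)(x,\xi ,\eta ,-y)|=|V_{\phi _1}f_1(y-x,\eta )|\,|V_{\phi _2}f_2(y,\xi +\eta )|$ obtained from Fourier inversion, followed by applying the iterated Orlicz norms slice by slice in the order $x,\xi ,\eta ,y$ so that the product separates into the $L^{\Phi _1,\Psi _1}_{(\omega _1)}$ norm of $V_{\phi _1}f_1$ and the starred $L^{\Psi _2,\Phi _2}_{*,(\omega _2)}$ norm of $V_{\phi _2}f_2$. The paper only writes out part (1) and dismisses (2) and (3) as analogous, exactly as you indicate.
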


\par

\begin{proof}
We shall mainly follow the proofs of Proposition 3.3
in \cite{CorOko1} and Proposition 2.2 in \cite{Toft8B}.

\par

It suffices to prove (1) and (2), and then we only prove
(1), since (2) follows by similar arguments.
% The small
% modifications when $p=\infty$ or $q=\infty$ are left for the reader.

\par

By Fourier's inversion formula we have
$$
|V_{\phi _1} f_1(-x-y,\eta )V_{\phi _2} f_2(-y,\xi +\eta
)|=|V_{\phi}(V_{f_1}f_2)(x,\xi ,\eta ,y)|
$$
(cf. e.{\,}g. \cite{Fol1,Gro1,Toft5,Toft15B}). Hence, if
$$
F_1(x,\xi ) = |V_{\phi _1} f_1(x,\xi )|\omega _1(x,\xi )
\quad \text{and}\quad
F_2(x,\xi ) = V_{\phi _2} f_2(x,\xi )\omega _2(x,\xi ),
$$
then
\begin{align*}
\nm
{V_\phi (V_{f_1}f_2)(\cdo ,\xi ,\eta ,y)
\omega _0(\cdo ,\xi ,\eta ,y)}
{L^{\Phi _1}(\rr {d})}
%\\[1ex]
&\le C \nm {F_1(-y-\cdo ,\eta )F_2(-y,\xi+\eta )}
{L^{\Phi _1}(\rr d)}
\\[1ex]
&=
C\nm {F_1(\cdo ,\eta )}{L^{\Phi _1} (\rr d)}
F_2(-y,\xi+\eta ).
\end{align*}
By applying the $L^{\Phi _2}$ quasi-norm with respect to
the $\xi$-variables we obtain
\begin{multline*}
\nm
{V_\phi (V_{f_1}f_2)(\cdo ,\eta ,y)
\omega _0(\cdo ,\eta ,y)}
{L^{\Phi _1,\Phi _2}(\rr {2d})}
\\[1ex]
\le
C\nm {F_1(\cdo ,\eta )}{L^{\Phi _1}(\rr d)}
\nm {F_2(-y,\cdo +\eta )}{L^{\Phi _2}(\rr d)}
=
C\nm {F_1(\cdo ,\eta )}{L^{\Phi _1}(\rr d)}
\nm {F_2(-y,\cdo )}{L^{\Phi _2}(\rr d)}.
\end{multline*}
The result now follows by first applying
the $L^{\Psi _1}$ quasi-norm on the $\eta$-variables,
and then the $L^{\Psi _2}$ quasi-norm on the
$y$-variables.
% %%
% \begin{multline*}
% \nm {V_\Psi (V_{f_1}f_2)}{L^{p,q}_{1,(\omega _0)}}^q
% \\[1ex]
% =\iint _{\rr {2d}}\Big
% (\iint _{\rr {2d}}|V_\Psi (V_{f_1}f_2)(x,\xi ,\eta ,y)\omega _0(x,\xi
% ,\eta ,y)|^p\, dxd\xi \Big )^{q/p}\, dyd\eta 
% \\[1ex]
% \le C^q \iint _{\rr {2d}}\Big (\iint _{\rr {2d}}|F_1(-x-y,\eta )
% F_2(-y,\xi+\eta ))|^p\, dxd\xi \Big )^{q/p}\, dyd\eta .
% \end{multline*}
% %%
\end{proof}

\par

\par

\begin{cor}\label{Cor:CordOkdjP3.3}
Let $f_1,f_2\in \maclS _{1/2}'(\rr d)$,
$\Phi$ and $\Psi$ be quasi-Young functions
and let
$\omega _0\in \mathscr P_E(\rr {4d})$
and $\omega _1,\omega
_2\in \mathscr P_E(\rr {2d})$
be such that
$$
\omega _0(x,\xi ,\eta ,-y)\asymp \omega
_1(y-x,\eta ) \omega _2(y,\xi +\eta ).
$$
Then
$f_1\in M^{\Phi ,\Psi}_{(\omega _1)}(\rr d)$
and
$f_2\in W^{\Psi ,\Phi}_{(\omega _2)}(\rr d)$,
if and only if
$V_{f_1}f_2
\in
M^{\Phi ,\Psi}_{(\omega _0)}
(\rr {2d})$,
and
$$
\nm {V_{f_1}f_2}
{M^{\Phi ,\Psi}_{(\omega _0)}}
\asymp
\nm{f_1}{M^{\Phi ,\Psi}_{(\omega _1)}}
\nm {f_2}{W^{\Psi ,\Phi}_{(\omega _2)}}.
$$
\end{cor}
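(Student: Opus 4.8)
The plan is to obtain Corollary \ref{Cor:CordOkdjP3.3} as the diagonal special case of Theorem \ref{Thm:CordOkdjP3.3}, namely the case $\Phi _1=\Phi _2=\Phi$ and $\Psi _1=\Psi _2=\Psi$; there is nothing new to prove, only the matching of hypotheses and the reading-off of the conclusion. First I would dispose of the weight assumption. The two-sided estimate $\omega _0(x,\xi ,\eta ,-y)\asymp \omega _1(y-x,\eta )\omega _2(y,\xi +\eta )$ of the corollary is, by the very meaning of $\asymp$, precisely the conjunction of the one-sided inequalities \eqref{Eq:omegarel1} and \eqref{Eq:omegarel2} with one and the same constant $C>0$. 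Hence part (3) of Theorem \ref{Thm:CordOkdjP3.3} applies with the above choice of Young functions (which are quasi-Young functions, since $\Phi$ and $\Psi$ are), and yields that $f_1\in M^{\Phi ,\Psi}_{(\omega _1)}(\rr d)$ and $f_2\in W^{\Psi ,\Phi}_{(\omega _2)}(\rr d)$ if and only if $V_{f_1}f_2\in M^{\Phi ,\Phi ,\Psi ,\Psi}_{(\omega _0)}(\rr {2d})$, together with
$$
\nm {V_{f_1}f_2}{M^{\Phi ,\Phi ,\Psi ,\Psi}_{(\omega _0)}}
\asymp
\nm {f_1}{M^{\Phi ,\Psi}_{(\omega _1)}}\,\nm {f_2}{W^{\Psi ,\Phi}_{(\omega _2)}},
$$
which is exactly \eqref{Eq:STFTOrlModEst3} in the present situation.

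It then remains only to recognize that the four-index mixed Orlicz modulation space $M^{\Phi ,\Phi ,\Psi ,\Psi}_{(\omega _0)}(\rr {2d})$ produced by the theorem is what is abbreviated $M^{\Phi ,\Psi}_{(\omega _0)}(\rr {2d})$ in the statement of the corollary, the repeated indices over the two position slots $(x,\xi)$ and the two frequency slots $(\eta ,y)$ of the short-time Fourier transform $V_\phi (V_{f_1}f_2)$ being suppressed. (If one wishes to work instead with genuinely $2d$-dimensional Orlicz norms over position and over frequency, one collapses the two inner $L^{\Phi}$-norms and the two outer $L^{\Psi}$-norms by the equivalences $M^{\Phi ,\Phi}_{(\cdot)}=M^{\Phi}_{(\cdot)}$ and $M^{\Psi ,\Psi}_{(\cdot)}=M^{\Psi}_{(\cdot)}$ of Proposition \ref{Prop:BasicPropOrlModSp1}(1), which is legitimate because Orlicz modulation spaces depend only on the near-origin behaviour of the Young functions, by Proposition \ref{Prop:OrliczModInvariance}.)

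Accordingly, there is no real obstacle here: the statement is a bookkeeping specialization of Theorem \ref{Thm:CordOkdjP3.3}, and the only point deserving a word of care is the passage between the four-index and two-index mixed-norm notations, handled as above. Should one prefer a self-contained argument, it suffices to re-run the short computation in the proof of Theorem \ref{Thm:CordOkdjP3.3} verbatim with $\Phi _1=\Phi _2=\Phi$ and $\Psi _1=\Psi _2=\Psi$, grouping the $L^{\Phi}$-norms over $(x,\xi)$ and the $L^{\Psi}$-norms over $(\eta ,y)$ from the outset; this reproduces \eqref{Eq:STFTOrlModEst3} directly in the form asserted in the corollary.
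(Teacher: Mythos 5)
Your proposal is correct and coincides with the paper's (implicit) argument: the corollary is stated without a separate proof precisely because it is the diagonal case $\Phi _1=\Phi _2=\Phi$, $\Psi _1=\Psi _2=\Psi$ of Theorem \ref{Thm:CordOkdjP3.3}\,(3), the two-sided weight condition being exactly the conjunction of \eqref{Eq:omegarel1} and \eqref{Eq:omegarel2}. Your remark on collapsing the four-index norm $M^{\Phi ,\Phi ,\Psi ,\Psi}_{(\omega _0)}$ to $M^{\Phi ,\Psi}_{(\omega _0)}$ via Proposition \ref{Prop:BasicPropOrlModSp1}\,(1) is precisely the bookkeeping the paper leaves tacit, and is the right way to justify it.
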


\par

\subsection{Continuity
for pseudo-differential operators when
acting on Orlicz modulation spaces}

\par

Next we apply the previous results to deduce
continuity for pseudo-differential operators
with symbols in modulation spaces which act
on Orlicz modulation spaces. The involved
weight functions should satisfy
\begin{equation}\label{Eq:WeightCondPseudo}
\frac {\omega _2(x,\xi  )}{\omega _1
(y,\eta )} \lesssim \omega _0( x-A(x-y),A^*\xi
+(I-A^*)\eta ,\xi -\eta ,y-x ).
\end{equation}
The following result extend \cite[Theorem 5.1]{CorNic2}.

\par

\begin{thm}\label{Thm:PseudoCont1}
Let $A\in \GL (d,\mathbf R)$, $p,q\in [1,\infty ]$
be such that $q\le p$,
and let
$\Phi _j,\Psi _j : [0,\infty ]\to [0,\infty ]$, $j=1,2$,
be such that the following is true:
\begin{itemize}
\item if $p=1$, then $\Phi _j$ and $\Psi _j$ are Young
functions;

\vrum

\item if $p>1$, then $\Phi _j$ and $\Psi _j$ are
$p'$-steered Young functions 
which fulfill a local $\Delta _2$-condition,
and for some $r>0$, it holds
\begin{alignat}{3}
\Phi _1(t),\Phi _2(t) &\gtrsim t^{q'} & \quad
\Psi _1(t),\Psi _2(t) &\gtrsim t^{q'},
& \quad t &\in [0,r],
\label{Eq:WignerDistYoungFuncCond1Ver2}
\intertext{and}
\Phi _1^{-\&}(s)\Phi _2^{-\&}(s)
&\lesssim
s^{\frac 1{p'}+\frac 1{q'}}, &
\quad %\text{and}\quad
\Psi _1^{-\&}(s)\Psi _2^{-\&}(s)
&\lesssim
s^{\frac 1{p'}+\frac 1{q'}}, &
\quad s &\in [0,r].
\label{Eq:WignerDistYoungFuncCond2Ver2}
\end{alignat}
\end{itemize}
Also let $\omega _0\in \mascP _E(\rr {2d}
\oplus \rr {2d})$
and $\omega _1,\omega _2\in \mascP _E(\rr {2d})$
satisfy \eqref{Eq:WeightCondPseudo}.
If $a\in M^{p,q}_{(\omega _0)}(\rr {2d})$,
then $\op _A(a)$ from $\maclS _{1/2}(\rr d)$ to
$\maclS _{1/2}'(\rr d)$
extends uniquely to a continuous map from
$M^{\Phi _1,\Psi _1}_{(\omega_1)}(\rr d)$
to
$M^{\Phi _2^*,\Psi _2^*}_{(\omega _2)}(\rr d)$, and
\begin{equation}\label{Eq:(A.7)}
\nm {\op _A(a)}
{M^{\Phi _1,\Psi _1}_{(\omega _1)}
\to
M^{\Phi _2^*,\Psi _2^*} _{(\omega _2)}}
\lesssim
\nm a{M^{p,q} _{(\omega _0)}}.
\end{equation}

\par

Moreover, if in addition $a$ belongs to the
closure of $\maclS _{1/2}$ under
the $M^{p,q}_{(\omega _0 )}$ norm, then
$\op _A(a)\, :\, M^{\Phi _1,\Psi _1}_{(\omega _1)}(\rr d)
\to
M^{\Phi _2^*,\Psi _2^*}_{(\omega _2)}(\rr d)$ is compact.
\end{thm}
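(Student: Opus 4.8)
The plan is to prove the estimate \eqref{Eq:(A.7)} by a duality argument built on the identity \eqref{Eq:PseudoWignerLink} between $\op _A(a)$ and Wigner distributions, the mapping property for Wigner distributions in Theorem \ref{Thm:WignerDistOrlicModSpace}, and the duality between Orlicz modulation spaces in Proposition \ref{Prop:BasicPropOrlModSp2}. The first observation is that $q\le p$ yields $p'\le q'$, so that the hypotheses \eqref{Eq:WignerDistYoungFuncCond1Ver2} and \eqref{Eq:WignerDistYoungFuncCond2Ver2} on $\Phi _j,\Psi _j$ are, up to relabelling the two factors, exactly the hypotheses of Theorem \ref{Thm:WignerDistOrlicModSpace} with $(p,q)$ replaced by $(p',q')$. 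A short computation also shows that \eqref{Eq:WeightCondPseudo} is, after the linear substitution
\[
(x,\xi ,\eta ,y)\longmapsto
\bigl(x-Ay,\ \xi +(I-A^*)\eta ,\ x+(I-A)y,\ \xi -A^*\eta \bigr)
\]
(the same change of variables that occurs in \eqref{Eq:STFTLinkKernelSymbol}), precisely the weight hypothesis \eqref{Eq:WeightCond1} of Theorem \ref{Thm:WignerDistOrlicModSpace} with $1/\omega _0$ in the role of $\omega$ and with $1/\omega _2$ and $\omega _1$ in the roles of the two window weights. Therefore, for $f\in M^{\Phi _1,\Psi _1}_{(\omega _1)}(\rd)$ and $g\in M^{\Phi _2,\Psi _2}_{(1/\omega _2)}(\rd)$, Theorem \ref{Thm:WignerDistOrlicModSpace} applied to the pair $(g,f)$ gives
\begin{equation*}
\nm {W_{g,f}^A}{M^{p',q'}_{(1/\omega _0)}}
\lesssim
\nm g{M^{\Phi _2,\Psi _2}_{(1/\omega _2)}}\,
\nm f{M^{\Phi _1,\Psi _1}_{(\omega _1)}}.
\end{equation*}

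Next I would feed this into \eqref{Eq:PseudoWignerLink}. Combining that identity with the duality pairing $M^{p,q}_{(\omega _0)}\times M^{p',q'}_{(1/\omega _0)}\to \mathbf C$ supplied by Proposition \ref{Prop:BasicPropOrlModSp2}, we obtain, for $a\in M^{p,q}_{(\omega _0)}(\rr{2d})$ and $f,g\in \maclS _{1/2}(\rd)$,
\begin{equation*}
|(\op _A(a)f,g)_{L^2}|
=(2\pi )^{-\frac d2}\,|(a,W_{g,f}^A)_{L^2}|
\lesssim
\nm a{M^{p,q}_{(\omega _0)}}\,
\nm f{M^{\Phi _1,\Psi _1}_{(\omega _1)}}\,
\nm g{M^{\Phi _2,\Psi _2}_{(1/\omega _2)}}.
\end{equation*}
Taking the supremum over $g$ in the unit ball of $M^{\Phi _2,\Psi _2}_{(1/\omega _2)}(\rd)$ and using that this space is a predual of $M^{\Phi _2^*,\Psi _2^*}_{(\omega _2)}(\rd)$ with the norm realized as such a supremum (Proposition \ref{Prop:BasicPropOrlModSp2}(1)) gives the bound \eqref{Eq:(A.7)} for $f\in \maclS _{1/2}(\rd)$. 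Since $\maclS _{1/2}(\rd)$ is dense in $M^{\Phi _1,\Psi _1}_{(\omega _1)}(\rd)$ when $p>1$ (Proposition \ref{Prop:BasicPropOrlModSp2}(2)), $\op _A(a)$ extends uniquely and continuously to a map $M^{\Phi _1,\Psi _1}_{(\omega _1)}(\rd)\to M^{\Phi _2^*,\Psi _2^*}_{(\omega _2)}(\rd)$; in the remaining case $p=1$ (hence $q=1$ and $p'=q'=\infty$) one instead defines $\op _A(a)f$ directly by $g\mapsto (2\pi )^{-d/2}(a,W_{g,f}^A)_{L^2}$, which by Theorem \ref{Thm:WignerDistOrlicModSpace} is a bounded functional on $M^{\Phi _2,\Psi _2}_{(1/\omega _2)}(\rd)$ for every $f\in M^{\Phi _1,\Psi _1}_{(\omega _1)}(\rd)$, with norm bounded by $\nm a{M^{1,1}_{(\omega _0)}}\nm f{M^{\Phi _1,\Psi _1}_{(\omega _1)}}$. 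In either case \eqref{Eq:PseudoWignerLink} shows that the extension agrees on $\maclS _{1/2}(\rd)$ with the operator from Section \ref{sec1}.

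For the last assertion, let $a$ belong to the $M^{p,q}_{(\omega _0)}$-closure of $\maclS _{1/2}$ and choose $a_n\in \maclS _{1/2}(\rr{2d})$ with $a_n\to a$ in $M^{p,q}_{(\omega _0)}(\rr{2d})$. By \eqref{Eq:(A.7)} applied to $a-a_n$ we have $\op _A(a_n)\to \op _A(a)$ in the operator norm from $M^{\Phi _1,\Psi _1}_{(\omega _1)}(\rd)$ to $M^{\Phi _2^*,\Psi _2^*}_{(\omega _2)}(\rd)$, so it suffices to prove each $\op _A(a_n)$ compact. Expanding $a_n\in \maclS _{1/2}(\rr{2d})$ in the system $\{W_{h_\alpha ,h_\beta }^A\}$ of Wigner distributions of Hermite functions $h_\alpha \in \maclS _{1/2}(\rd)$, the partial sums converge to $a_n$ in $\maclS _{1/2}(\rr{2d})$, hence in $M^{p,q}_{(\omega _0)}(\rr{2d})$ since $\maclS _{1/2}(\rr{2d})\hookrightarrow M^{p,q}_{(\omega _0)}(\rr{2d})$ (Proposition \ref{Prop:BasicPropOrlModSp1}); by \eqref{Eq:PseudoWignerLink2} each $\op _A(W_{h_\alpha ,h_\beta }^A)$ has rank one, so, using \eqref{Eq:(A.7)} once more, $\op _A(a_n)$ is an operator-norm limit of finite-rank operators and is therefore compact. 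Hence $\op _A(a)$ is a limit of compact operators, and is compact.

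The translation of the hypotheses and the bookkeeping of the change of variables identifying \eqref{Eq:WeightCondPseudo} with \eqref{Eq:WeightCond1} are routine, and once Theorem \ref{Thm:WignerDistOrlicModSpace} and Proposition \ref{Prop:BasicPropOrlModSp2} are available the duality estimate is essentially automatic. The step requiring the most care is the endpoint case $p=1$: there the Young functions $\Phi _j,\Psi _j$ need not satisfy a $\Delta _2$-condition, so $\Sigma _1(\rd)$ need not be dense in the source space and the extension of $\op _A(a)$ must be produced directly through the Wigner pairing rather than by density; one also has to be attentive to which of the spaces involved has $\Sigma _1(\rd)$ dense in it in order to legitimately express the relevant norms as suprema of dual pairings.
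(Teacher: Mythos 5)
Your argument is correct and its core is the same as the paper's: the bilinear estimate
$|(\op _A(a)f,g)_{L^2}|\lesssim \nm a{M^{p,q}_{(\omega _0)}}\nm f{M^{\Phi _1,\Psi _1}_{(\omega _1)}}\nm g{M^{\Phi _2,\Psi _2}_{(1/\omega _2)}}$
obtained by feeding Theorem \ref{Thm:WignerDistOrlicModSpace} (applied to $W^A_{g,f}$, with the weight condition \eqref{Eq:WeightCondPseudo} translated into \eqref{Eq:WeightCond1}) into \eqref{Eq:PseudoWignerLink} and then dualizing via Proposition \ref{Prop:BasicPropOrlModSp2}. The differences are in the periphery. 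The paper organizes the extension around density of $\maclS _{1/2}(\rr {2d})$ in the \emph{symbol} space: it first proves the estimate for $a\in \maclS _{1/2}(\rr {2d})$, passes to general $a$ by density when $p<\infty$, uses Hahn--Banach together with narrow convergence when $p=\infty$, $q<\infty$, and for $p=q=\infty$ observes that \eqref{Eq:WignerDistYoungFuncCond2Ver2} forces $\Phi _j(t)\asymp \Psi _j(t)\asymp t$, reducing to the classical $M^{1,1}\to M^{\infty ,\infty}$ bound from \cite{Toft15B}. You instead work with general $a$ from the start and organize the extension around density of test functions in the \emph{source} space, isolating $p=1$ as the case where the $\Delta _2$-condition (hence density) may fail and defining $\op _A(a)f$ there directly through the Wigner pairing; this is legitimate, though you should note that for general $a\in M^{p,q}_{(\omega _0)}(\rr {2d})$ one must check that the distributional pairing $(a,W^A_{g,f})$ agrees with the extended dual pairing of Proposition \ref{Prop:BasicPropOrlModSp2}, which is exactly the point the paper's symbol-side density argument is designed to settle. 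Finally, you supply a proof of the compactness assertion (operator-norm approximation by finite-rank operators via the Hermite--Wigner expansion and \eqref{Eq:PseudoWignerLink2}), which the paper's printed proof does not address at all; this is a genuine addition and the argument is sound.
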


\par

\begin{proof}
First suppose that $p<\infty$. Then $q<\infty$, and it follows
that $\maclS _{1/2}(\rr {2d})$ is dense in
$M^{p,q}_{(\omega _0)}(\rr {2d})$.
Let $f,g\in \maclS _{1/2}'(\rr d)$ and
$a\in \maclS _{1/2}(\rr {2d})$. Then
\eqref{Eq:PseudoWignerLink}
and Theorem \ref{Thm:WignerDistOrlicModSpace}
gives
\begin{equation}\label{Eq:PseudoActionEst}
\begin{aligned}
|(\op _A(a)f,g)|
&\asymp
|(a,W^A_{g,f})|
\lesssim
\nm a{M^{p,q}_{(\omega _0)}}
\nm {W^A_{g,f}}{M^{p',q'}_{(1/\omega _0)}}
\\[1ex]
&\lesssim
\nm a{M^{p,q}_{(\omega _0)}}
\nm f{M^{\Phi _1,\Psi _1}_{(\omega _1)}}
\nm g{M^{\Phi _2,\Psi _2}_{(1/\omega _2)}},
\end{aligned}
\end{equation}
and by duality it follows that $\op _A(a)$ from
$\maclS _{1/2}'(\rr d)$ to $\maclS _{1/2}(\rr d)$ restricts
to a continuous map from 
$M^{\Phi _1,\Psi _1}_{(\omega _1)}(\rr d)$
to
$M^{\Phi _2^*,\Psi _2^*}_{(\omega _2)}(\rr d)$,
and that \eqref{Eq:PseudoActionEst} gives
\eqref{Eq:(A.7)}. The result now follows in
this case by \eqref{Eq:PseudoActionEst} and the
fact that $\maclS _{1/2}(\rr {2d})$ is dense in
$M^{p,q}_{(\omega _0)}(\rr {2d})$.

\par

Next suppose that $p=\infty$ and $q<\infty$. If
$a\in \maclS _{1/2}(\rr {2d})$, then
\eqref{Eq:PseudoActionEst} implies that \eqref{Eq:(A.7)}
holds in this case as well. By Hahn-Banach's theorem it
follows that the definition of $\op _A(a)$ is extendable
to any $a\in M^{p,q}_{(\omega _0)}(\rr {2d})$, and that
\eqref{Eq:(A.7)} still holds. The uniqueness of the extension
now follows from the fact that 
$\maclS _{1/2}(\rr {2d})$
is dense in $M^{p,q}_{(\omega _0)}(\rr {2d})$ with
respect to the narrow convergence, when $q<\infty$
(see \cite{Toft28}). 

\par

Finally, if $p=q=\infty$, then
\eqref{Eq:WignerDistYoungFuncCond2} implies that
$$
\Phi _j(t)\asymp t
\quad \text{and}\quad
\Psi _j(t)\asymp t,
\quad
j=1,2,
$$
giving that $M^{\Phi _1,\Psi _1}_{(\omega _1)}(\rr d)
=M^{1,1}_{(\omega _1)}(\rr d)$ and
$M^{\Phi _2^*,\Psi _2^*}_{(\omega _2)}(\rr d)=
M^{\infty,\infty}_{(\omega _2)}(\rr d)$. The result now
follows by choosing $p=q=\infty$
in \cite[Theorem 2.2]{Toft15B}.
% from 
% Since $\maclS _{1/2}(\rr {2d})$ is dense in
% $M^{p,q}_{(\omega _0)}(\rr {2d})$
% The result now follows from these estimates
% and the fact that $\Phi _j$ and $\Psi _j$
% satisfy the $\Delta _2$ condition, $j=1,2$,
% leading to that $\maclS _{1/2} (\rr d)$ are dense
% in $M^{\Phi _j,\Psi _j}_{(\omega )}(\rr d)$
% and $M^{\Phi _j^*,\Psi _j^*}_{(\omega )}(\rr d)$,
% $j=1,2$, for every
% $\omega \in \mascP (\rr {2d})$,
% and that the dual of
% $M^{\Phi _2,\Psi _2}_{(1/\omega _2)}(\rr d)$
% is equal to
% $M^{\Phi _2^*,\Psi _2^*}
% _{(\omega _2)}(\rr d)$.
% This gives the result.
\end{proof}

\par

As a special case we obtain the following extension of
Proposition \ref{Prop:PseudoContIntro1} in the
introduction. The details are left for the reader.

\par

\renewcommand{\rubrik}{Proposition \ref{Prop:PseudoContIntro1}$'${\!}}

\par

\begin{tom}
Let  $A\in \GL (d,\mathbf R)$,
$p,q\in [1,\infty ]$ be such that $q\le p$ and $p>1$,
$\omega _0\in \mascP _E(\rr {2d}
\oplus \rr {2d})$
and $\omega _1,\omega _2\in \mascP _E(\rr {2d})$
satisfy \eqref{Eq:WeightCondPseudo}.
Also let
$\Phi _j,\Psi _j : [0,\infty ]\to [0,\infty ]$, $j=1,2$,
be such that  $t\mapsto \Phi _j(t^{\frac 1{p'}})$
and
$t\mapsto \Psi _j(t^{\frac 1{p'}})$ are
Young functions 
which fulfill the $\Delta _2$-condition,
and
\begin{alignat*}{3}
\Phi _1(t),\Phi _2(t) &\gtrsim t^{q'} & \quad
\Psi _1(t),\Psi _2(t) &\gtrsim t^{q'},
& \quad t &\ge 0,
\intertext{and}
\Phi _1^{-1}(s)\Phi _2^{-1}(s)
&\lesssim
s^{\frac 1{p'}+\frac 1{q'}}, &
\quad
\Psi _1^{-1}(s)\Psi _2^{-1}(s)
&\lesssim
s^{\frac 1{p'}+\frac 1{q'}}, &
\quad s &\ge 0.
\end{alignat*}
If $a\in M^{p,q}_{(\omega _0)}(\rr {2d})$,
then $\op _A(a)$ is continuous from
$M^{\Phi _1,\Psi _1}_{(\omega _1)}(\rr d)$
to
$M^{\Phi _2^*,\Psi _2^*}_{(\omega _2)}(\rr d)$.
\end{tom}

\par

By similar type of duality arguments,
using Theorem \ref{Thm:CordOkdjP3.3} instead of
Theorem \ref{Thm:WignerDistOrlicModSpace},
we obtain the following extension of
\cite[Theorem 2.1]{Toft15B}. Here we observe that
\eqref{Eq:WeightCondPseudo} takes the form
\begin{equation}\tag*{(\ref{Eq:WeightCondPseudo})$'$}
\frac {\omega _2(x,\xi )}{\omega _1(y,\eta )} 
\lesssim
\omega (x,\xi +\eta ,\xi -\eta ,y-x)
\end{equation}
when $A=0$.

\par

\begin{thm}\label{Thm:Wpseudos}
Let $\Phi$ and $\Psi$ be Young functions
which satisfy local $\Delta _2$-condition,
and let $\omega \in \mascP _E(\rr {4d})$
and $\omega _1,\omega _2\in \mascP _E (\rr {2d})$
be such that \eqref{Eq:WeightCondPseudo}$'$
holds. Also let
$a\in W^{\Psi ,\Phi}_{(\omega )}(\rr {2d})$.
Then the definition of $\op _0(a)$
from $\maclS _{1/2}(\rr d)$ to
$\maclS '(\rr d)$ extends uniquely to a
continuous map from
$M^{\Phi ^*,\Psi ^*}_{(\omega _1)}(\rr d)$
to $W^{\Psi ,\Phi}_{(\omega _2)}(\rr
d)$, and
\begin{equation}
\nm {\op _0(a)f}{W^{\Psi ,\Phi}_{(\omega _2)}}
\lesssim
\nm {a}{W^{\Psi ,\Phi}_{(\omega )}}
\nm f{M^{\Phi ^*,\Psi ^*}_{(\omega _1)}},
\qquad
a\in W^{\Psi ,\Phi}_{(\omega )}(\rr {2d}),
\ 
f\in M^{\Phi ^*,\Psi ^*}_{(\omega _1)}(\rr d).
\end{equation}
\end{thm}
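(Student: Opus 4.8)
The plan is to adapt the duality argument used to prove Theorem~\ref{Thm:PseudoCont1}, now using the short-time Fourier transform estimates of Theorem~\ref{Thm:CordOkdjP3.3} in place of the Wigner distribution estimates of Theorem~\ref{Thm:WignerDistOrlicModSpace}, and working with the Kohn--Nirenberg kernel $K_{a,0}$ in place of the Wigner distribution $W^A_{g,f}$. As preliminary reductions, I would first invoke Remark~\ref{Rem:Delta2Cond} together with the fact that $W^{\Psi,\Phi}_{(\omega)}$ depends only on the behaviour of $\Psi,\Phi$ near the origin (the Wiener-amalgam analogue of Proposition~\ref{Prop:OrliczModInvariance}) to assume that $\Phi$ and $\Psi$ satisfy the $\Delta_2$-condition on all of $[0,\infty]$. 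Then $\maclS_{1/2}(\rr{2d})$ is dense in $W^{\Psi,\Phi}_{(\omega)}(\rr{2d})$, so it suffices to prove the displayed norm estimate for $a\in\maclS_{1/2}(\rr{2d})$ and then extend by continuity; the uniqueness of this extension follows, as in Theorem~\ref{Thm:PseudoCont1}, from the weak (narrow) density of $\maclS_{1/2}(\rr d)$ in $M^{\Phi^*,\Psi^*}_{(\omega_1)}(\rr d)$, cf.\ Proposition~\ref{Prop:BasicPropOrlModSp2}.

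\medskip

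For the core estimate, fix $a\in\maclS_{1/2}(\rr{2d})$ and $f\in M^{\Phi^*,\Psi^*}_{(\omega_1)}(\rr d)$. By the Wiener-amalgam version of Proposition~\ref{Prop:BasicPropOrlModSp2}(1),
\[
\nm{\op_0(a)f}{W^{\Psi,\Phi}_{(\omega_2)}}\asymp\sup\bigl|(\op_0(a)f,g)_{L^2}\bigr|,
\]
the supremum taken over $g$ in the unit ball of the dual space $W^{\Psi^*,\Phi^*}_{(1/\omega_2)}(\rr d)$ of $W^{\Psi,\Phi}_{(\omega_2)}(\rr d)$. Using the identity $(\op_0(a)f,g)_{L^2}=(K_{a,0},g\otimes\overline f)_{L^2(\rr{2d})}$ (equivalently \eqref{Eq:PseudoWignerLink} with $A=0$, $(\op_0(a)f,g)_{L^2}=(2\pi)^{-d/2}(a,W^0_{g,f})_{L^2}$), expanding both entries by the short-time Fourier transform on $\rr{2d}$ with a window $\phi=V_{\phi_1}\phi_2$ as in Theorem~\ref{Thm:CordOkdjP3.3}, and applying the H\"older inequality for Orlicz spaces (Proposition~\ref{thm-Holder}), I would bound $|(\op_0(a)f,g)_{L^2}|$ by a product
\[
\nm{V_\phi K_{a,0}}{L^{\Theta_1,\Theta_2,\Theta_3,\Theta_4}_{(\omega_0)}}\cdot\nm{V_\phi(g\otimes\overline f)}{L^{\Theta_1^*,\Theta_2^*,\Theta_3^*,\Theta_4^*}_{(1/\omega_0)}},
\]
with $\Theta_1,\dots,\Theta_4\in\{\Phi,\Psi\}$ in a suitable order and a moderate weight $\omega_0\in\mascP_E(\rr{4d})$ still to be fixed.

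\medskip

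It then remains to identify the two factors. For the first, \eqref{Eq:STFTLinkKernelSymbol} with $A=0$ rewrites $|V_\phi K_{a,0}|$ as $|V_\psi a|$ precomposed with a linear change of variables, and after that substitution $\nm{V_\phi K_{a,0}}{L^{\Theta_1,\dots,\Theta_4}_{(\omega_0)}}$ becomes, up to a constant, the norm $\nm a{W^{\Psi,\Phi}_{(\omega)}}$, provided $\omega_0$ is chosen compatibly with $\omega$. For the second, $g\otimes\overline f$ coincides with $V_fg$ up to a partial Fourier transform and a linear coordinate change on $\rr{2d}$ — operations that act boundedly on Orlicz modulation spaces with a controlled relabeling of the Young functions and a twist of the weight — so this factor is controlled by Theorem~\ref{Thm:CordOkdjP3.3}(1) and is $\lesssim\nm f{M^{\Phi^*,\Psi^*}_{(\omega_1)}}\,\nm g{W^{\Psi^*,\Phi^*}_{(1/\omega_2)}}$ once its hypothesis \eqref{Eq:omegarel1} is checked; and a moderate $\omega_0$ with \eqref{Eq:omegarel1} exists precisely because the weights are assumed to satisfy \eqref{Eq:WeightCondPseudo}$'$. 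Taking the supremum over $g$ yields the displayed estimate for $a\in\maclS_{1/2}$, and the density reduction finishes the proof.

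\medskip

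The main obstacle is exactly the bookkeeping in the previous paragraph: keeping track of the two linear substitutions (the one in \eqref{Eq:STFTLinkKernelSymbol} with $A=0$ and the one converting $V_fg$ into $g\otimes\overline f$), choosing the order of $\Theta_1,\dots,\Theta_4$ so that the two four-index Orlicz spaces occurring above are genuinely conjugate to one another, and verifying that the single hypothesis \eqref{Eq:WeightCondPseudo}$'$ is, after these substitutions, equivalent to the weight condition \eqref{Eq:omegarel1} required by Theorem~\ref{Thm:CordOkdjP3.3}. Once that is pinned down, the reductions, the density step, and the final supremum are routine and mirror the proof of Theorem~\ref{Thm:PseudoCont1}.
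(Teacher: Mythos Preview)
Your proposal is correct and follows the same duality strategy as the paper. The paper streamlines exactly the bookkeeping you flag as the main obstacle: instead of pairing the kernel $K_{a,0}$ with the tensor $g\otimes\overline f$ and then performing two separate linear substitutions, it uses the identity $(\op _0(a)f,g)_{L^2}=(2\pi)^{-d/2}(T\widehat a,\,V_{\overline g}\overline f)_{L^2}$, where $T$ is the torsion $TF(x,\xi)=F(-\xi,x)$. This places a genuine short-time Fourier transform in the second slot, so Theorem~\ref{Thm:CordOkdjP3.3} (in the form of Corollary~\ref{Cor:CordOkdjP3.3}) applies immediately without any conversion of $g\otimes\overline f$; and the first slot is handled by a single direct computation showing $\nm{T\widehat a}{M^{\Phi,\Psi}_{(1/\omega_0)}}=\nm a{W^{\Psi,\Phi}_{(\omega)}}$ for the specific choice $\omega_0(x,\xi,\eta,y)=\omega(-y,\eta,\xi,-x)^{-1}$. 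Your route via $K_{a,0}$ and \eqref{Eq:STFTLinkKernelSymbol} would arrive at the same estimate, but the paper's identity collapses your two coordinate changes into one and removes the need to match up a four-index Orlicz space with its conjugate. The paper also treats the borderline cases where $\Phi^*$ or $\Psi^*$ vanishes near the origin separately (there the $W$ and $M$ spaces degenerate to classical $W^{1,\infty}$, $W^{\infty,1}$, $M^{1,\infty}$, $M^{\infty,1}$ and one quotes the Lebesgue-exponent result directly), a point your density reduction glosses over.
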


\par

\begin{proof}
We shall follow the proof of Theorem 2.1 in
\cite{Toft15B}.
We may assume that equality holds in
\eqref{Eq:WeightCondPseudo}$'$.
We start to prove the result in the case $\Phi ^*(t)>0$
and $\Psi ^*(t)>0$ when $t>0$. Then we may replace
$\Phi$ and $\Psi$ such that the Orlicz modulation
spaces are the same and $\Phi$ and $\Psi$
satisfy (global) $\Delta _2$-conditions.

\par

Let
$$
\omega _0(x,\xi ,\eta ,y ) = \omega (-y,\eta ,\xi ,-x)^{-1},
$$
$a\in W^{\Psi ,\Phi}_{(\omega )}(\rr {2d})$ and $f,g\in
\maclS _{1/2}(\rr d)$. Then $\op _0(a)f$
makes sense as an element in
$\maclS _{1/2}'(\rr d)$. 

\par

By Theorem \ref{Thm:CordOkdjP3.3}  we get
\begin{equation}\label{Vfgigen}
\nm {V_fg}{M^{\Phi ^*,\Psi ^*}_{(\omega _0)}}
\lesssim
\nm f{M^{\Phi ^*,\Psi ^*}_{(\omega
_1)}}\nm g{W^{\Psi ^*,\Phi ^*}_{(1/\omega _2)}}.
\end{equation}

\par

Furthermore, if $T$ is the torsion operator
defined by $TF(x,\xi )=F(-\xi ,x)$ when
$F\in \maclS _{1/2}'(\rr {2d})$, then it
follows by Fourier's inversion formula that
$$
(V_{\phi }(T\widehat a))(x,\xi ,\eta ,y )
=
e^{-i(\scal x\eta +\scal y\xi )}
(V_{T\widehat \phi }a)(-y,\eta ,\xi
,-x).
$$
This gives
\begin{equation*}
|(V_{\phi}(T\widehat a))(x,\xi ,\eta ,y )\omega _0(x,\xi ,\eta
,y)^{-1}|
\\[1ex]
=|(V_{\phi _1}a)(-y,\eta ,\xi ,-x)\omega (-y,\eta ,\xi ,-x)|,
\end{equation*}
when $\phi _1=T\widehat \phi$. Hence, by
applying the $L^{\Phi ,\Psi}$ norm we obtain
$$
\nm {T\widehat a}{M^{\Phi ,\Psi }_{(1/\omega _0)}}
=\nm a{W^{\Psi ,\Phi}_{(\omega )}}.
$$

\par

It  now follows from \eqref{Vfgigen} that
\begin{equation}\label{estimates1}
\begin{aligned}
|(\op _0(a)f,g)|
&=
(2\pi )^{-d/2}|(T\widehat a,V_{\overline g}\overline f)|
\\[1ex]
&\lesssim
\nm {T\widehat a}{M^{\Phi ,\Psi}_{(1/\omega _0)}}
\nm {V_{f}g}{M^{\Phi ^*,\Psi ^*}_{(\omega _0)}}
\lesssim \nm a{W^{\Psi ,\Phi}_{(\omega )}}
\nm f{M^{\Phi ^*,\Psi ^*}_{(\omega _1)}}\nm
g{W^{\Psi ^*,\Phi ^*}_{(1/\omega _2)}}.
\end{aligned}
\end{equation}
The result now follows by the facts that
$\maclS _{1/2}(\rr d)$ is dense
in $M^{\Phi ^*,\Psi ^*}_{(\omega _1)}(\rr d)$, and
that the dual of
$W^{\Psi ^*,\Phi ^*}_{(1/\omega _2)}$ is
$W^{\Phi ,\Psi}_{(\omega _2)}$ when
$\Phi ^*$ and $\Psi ^*$ satisfies the
$\Delta _2$-condition.

\par

If instead $\Phi ^*(t)=0$ and $\Psi (t)>0$,
or $\Phi (t)>0$ and $\Psi ^*(t)=0$, when $t>0$
is close to origin,
then let $f\in M^{\Phi ^*,\Psi ^*}_{(\omega _1)}$
and $a\in \maclS _{1/2}(\rr {2d})$.
Then $\op _0(a)f$ makes sense as an element in
$\maclS _{1/2}(\rr d)$, and from the first part
of the proof it follows that
\eqref{estimates1} still holds. The result now follows
by duality and
the fact that $\mathscr S(\rr {2d})$ is dense in
$W^{\Psi ,\Phi}_{(\omega
)}(\rr {2d})$, since it follows from the assumptions
that $\Phi$ and $\Psi$ fulfill the
$\Delta _2$-condition.

\par

It remains to consider the case when
$\Phi (t)=\Psi ^*(t)=0$ and the case when
$\Phi ^*(t)=\Psi (t)=0$ when $t> 0$ is near origin.
In this case, we have
$$
W^{\Psi ,\Phi} = W^{1,\infty}
\quad \text{and}\quad
M^{\Phi ^*,\Psi ^*} = M^{1,\infty},
$$
or
$$
W^{\Psi ,\Phi} = W^{\infty ,1}
\quad \text{and}\quad
M^{\Phi ^*,\Psi ^*} = M^{\infty ,1}.
$$
The result then follows by letting
$p=q'=\infty$ or $p=q'=1$ in
\cite[Theorem 3.9]{TeoTof} or in the proof of
\cite[Theorem 2.1]{Toft15B}.
\end{proof}

\par

\begin{example}\label{Example:ContPseudo}
Let $A\in \GL (d,\mathbf R)$, $p>2$, $a\in M^{p,p'}(\rr {2d})$
and $\Phi$ be a Young
function which fullfils \eqref{Eq:SpecYoungFunc}. That is,
we let $q=p'$ in our results. Then
it follows that $\Phi$ fullfils a local $\Delta _2$-condition,
$$
\Phi (t) \gtrsim t^{p}=t^{q'}
\quad \text{and}\quad
\Phi ^{-1}(s)^2
\lesssim
s=s^{\frac 1{p'}+\frac 1{q'}}.
$$
Hence the hypothesis in Propositions \ref{Prop:PseudoContIntro1}
and \ref{Prop:PseudoContIntro1}$'$ (as well as in
Theorem \ref{Thm:PseudoCont1}) are fulfilled with
$$
\Phi _1=\Phi _2=\Psi _1=\Psi _2=\Phi . 
$$
It now follows from any of these results that
\begin{equation}\label{Eq:ExContPseudo}
\op _A(a) : M^\Phi (\rr d)\to M^\Phi (\rr d)
\end{equation}
is continuous.

\par

We also observe that if instead $a$ belongs to $M^2(\rr {2d})$,
which is near $M^{p,p'}(\rr {2d})$ when $p>2$ is closed to $2$,
then the map \eqref{Eq:ExContPseudo} may be discontinuous
(cf. Remark \ref{Rem:DiscontPseudo} in the end of the next section).
\end{example}

\par

%%%%%%%%%%%%%%%%%%%%%%%%%%%%%%
\section{An Orlicz modulation space suitable for
the entropy functional}\label{sec3}
%%%%%%%%%%%%%%%%%%%%%%%%%%%%%%

\par

In this section we show that the entropy
functional in \eqref{Eq:SpecOrlModSpIntro}
is continuous on the
modulation spaces $M^p(\rr d)$, $1\le p<2$,
and the Orlicz modulation
space $M^\Phi (\rr d)$ with $\Phi (t)=-t^2\log t$ near origin.
For completeness we also give a proof of that the same
functional is discontinuous on $M^2(\rr d)=L^2(\rr d)$.
(Cf. Theorem \ref{Thm:EntropyCont}.)
In order to reach such properties we need to prove some
preparing results which might be of independent interest.
For example we deduce estimates for entropy functionals
when changing window functions (see Lemma
\ref{Lem:DiscontEntrOnL2}).

\par

We observe that the entropy functional
\eqref{Eq:SpecOrlModSpIntro} can be written as
\begin{equation}\tag*{(\ref{Eq:EntropyFunc})$'$}
\begin{aligned}
E(f) &= E_\phi (f)
\\[1ex]
&\equiv
-\iint _{\rr {2d}}|V_\phi f(x,\xi )|^2
\log |V_\phi f(x,\xi )|^2\, dxd\xi
+
\nm {\phi}{L^2}^2\nm {f}{L^2}^2\log (\nm {\phi}{L^2}^2\nm {f}{L^2}^2),
\end{aligned}
\end{equation}
by using Moyal's identity
\begin{equation}\label{Eq:Moyal}
\nm {V_\phi f}{L^2} =\nm f{L^2}\nm \phi{L^2}
\end{equation}
(see e.{\,}g. \cite{Gro1}). In particular, if $\nm f{L^2}=\nm \phi{L^2}=1$
which is a common condition in the applications, the entropy
of $f$ becomes
\begin{equation}\tag*{(\ref{Eq:EntropyFunc})$''$}
E(f) = E_\phi (f)
=
-\iint _{\rr {2d}}|V_\phi f(x,\xi )|^2
\log |V_\phi f(x,\xi )|^2\, dxd\xi ,
\quad
\nm f{L^2}=\nm \phi{L^2}=1
\end{equation}
(see e.{\,}g. \cite{Lie1,LieSol}). For general $f,\phi \in L^2(\rr d)$
we observe that the entropy possess homogeneity properties
of the form
\begin{equation}\label{Eq:EntrHom}
E_{\lambda \phi}(f) = E_{\phi}(\lambda f) = |\lambda |^2E_\phi (f),
\qquad
f,\phi \in L^2(\rr d),\ \lambda \in \mathbf C .
\end{equation}
In fact, Moyal's identity gives
\begin{align*}
E_{\lambda \phi}(f) &= E_{\phi}(\lambda f)
\\[1ex]
&=
-\iint _{\rr {2d}}|\lambda |^2 |V_\phi f(x,\xi )|^2
\log (|\lambda ||V_\phi f(x,\xi )|)^2\, dxd\xi
+
|\lambda |^2\nm {\phi}{L^2}^2\nm {f}{L^2}^2
\log (|\lambda |^2\nm {\phi}{L^2}^2\nm {f}{L^2}^2)
\\[1ex]
&=
|\lambda |^2
\left (-\iint _{\rr {2d}} |V_\phi f(x,\xi )|^2
\log |V_\phi f(x,\xi )|^2\, dxd\xi
+
\nm {\phi}{L^2}^2\nm {f}{L^2}^2
\log (\nm {\phi}{L^2}^2\nm {f}{L^2}^2)
\right )
\\
&+
(\log |\lambda |^2)(\nm \phi{L^2}^2\nm f{L^2}^2-\nm {V_\phi f}{L^2} ^2)
\\[1ex]
&=
|\lambda |^2E_\phi (f).
\end{align*}

\par

In order to discuss continuity for the entropy functional, we
restrict ourself and assume that the window functions belong
to the subspace $M^1(\rr d)$ of $L^2(\rr d)$.
The main result of the section is the following.

\par

\begin{thm}\label{Thm:EntropyCont}
Let $\Phi$ be a Young function which satisfies
\eqref{Eq:SpecOrlModSpIntro},
$\phi \in M^1(\rr d)\setminus 0$ and let
$E_\phi$ be as in \eqref{Eq:EntropyFunc}. Then the following
is true:
\begin{enumerate}
\item $E_\phi$ is continuous on $M^p(\rr d)$ and on $M^\Phi (\rr d)$,
$1\le p<2$;

\vrum

\item $E_\phi$ is discontinuous on $M^2(\rr d)$.
\end{enumerate}
\end{thm}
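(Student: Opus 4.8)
The plan is to split $E_\phi$ into an entropy integral and an elementary term, exploiting the exact identity $-t^2\log (t^2)=2\Phi (t)$ valid for $0\le t\le e^{-\frac 23}$. By Moyal's identity \eqref{Eq:Moyal}, for $f$ in $M^\Phi (\rr d)$ or in $M^p(\rr d)$ with $1\le p<2$ (all contained in $M^2(\rr d)=L^2(\rr d)$) one may write
\begin{equation*}
E_\phi (f)=\iint _{\rr {2d}}h(|V_\phi f(x,\xi )|)\, dxd\xi +\nm \phi{L^2}^2\nm f{L^2}^2\log \big (\nm \phi{L^2}^2\nm f{L^2}^2\big ),\qquad h(t)=-t^2\log (t^2),
\end{equation*}
where $h(t)=2\Phi (t)$ for $t\le e^{-\frac 23}$. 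Since $f\in L^2(\rr d)$ forces $|V_\phi f|\le \nm f{L^2}\nm \phi{L^2}$ pointwise, the last term is a continuous function of $\nm f{L^2}$, hence continuous on $M^\Phi (\rr d)$ and on $M^p(\rr d)$ via their continuous embeddings into $L^2(\rr d)$. Thus (1) reduces to continuity of $f\mapsto \iint h(|V_\phi f|)$.

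I first treat $M^\Phi (\rr d)$. One checks that $\Phi (2t)/\Phi (t)\to 4$ as $t\to 0^+$, so $\Phi$ obeys a local $\Delta _2$-condition (cf. Remark \ref{Rem:Delta2Cond}); then $V_\phi$ is bounded from $M^\Phi (\rr d)$ into $L^\Phi (\rr {2d})$ by Proposition \ref{Prop:BasicPropOrlModSp1}(2), and norm convergence in $L^\Phi$ forces modular convergence $\iint \Phi (|V_\phi f_n|)\to \iint \Phi (|V_\phi f|)$, whence $\{\Phi (|V_\phi f_n|)\}_n$ is equi-integrable in $L^1(\rr {2d})$. Given $f_n\to f$ in $M^\Phi (\rr d)$, pick a level $\varepsilon \in (0,e^{-\frac 23})$ with $\{|V_\phi f|=\varepsilon \}$ of measure zero and split $\iint h(|V_\phi f_n|)$ over $\{|V_\phi f_n|\le \varepsilon \}$ and its complement. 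On the first set $h=2\Phi$, and equi-integrability together with tightness of $\{\Phi (|V_\phi f_n|)\}_n$ makes this contribution uniformly small as $\varepsilon \to 0$. On the second set the measure is at most $\varepsilon ^{-2}\sup _n\nm {f_n}{L^2}^2\nm \phi{L^2}^2<\infty$, $h$ is Lipschitz on the bounded range attained there, and $V_\phi f_n\to V_\phi f$ in $L^2(\rr {2d})$; so dominated convergence handles that piece. Letting $\varepsilon \to 0$ gives $E_\phi (f_n)\to E_\phi (f)$ on $M^\Phi (\rr d)$.

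For $M^p(\rr d)$ with $p<2$ one has $-t^2\log (t^2)\lesssim t^p$ near origin, so $M^p(\rr d)\hookrightarrow M^\Phi (\rr d)$ continuously by Proposition \ref{Prop:OrliczModInvariance} (equivalently Lemma \ref{Lem:OrlModCloseM2}); thus $M^p$-convergence implies $M^\Phi$-convergence and continuity on $M^p(\rr d)$ follows. (Alternatively the argument above runs verbatim with $L^p$ replacing $L^\Phi$ and $\{|V_\phi f_n|^p\}_n$ replacing $\{\Phi (|V_\phi f_n|)\}_n$.) This proves (1). For (2) I would produce $f_n\to 0$ in $M^2(\rr d)$ with $E_\phi (f_n)\not\to 0=E_\phi (0)$. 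By the change-of-window estimate of Lemma \ref{Lem:DiscontEntrOnL2} one reduces, up to $O(\nm {f_n}{L^2}^2)$ errors, to the $L^2$-normalized Gaussian window $\phi _0$. Let $h_n$ be a normalized sum of $N_n$ time-frequency shifts of $\phi _0$ along a well-separated set of phase-space points; then $\nm {h_n}{L^2}^2=1+o(1)$ and $|V_{\phi _0}h_n|^2$ is, up to negligible cross terms, a sum of $N_n$ essentially disjoint bumps of mass $N_n^{-1}$, so a direct computation gives $E_{\phi _0}(h_n)=\log N_n+O(1)$. Taking $\varepsilon _n=(\log N_n)^{-\frac 12}$ with $N_n\to \infty$ and $f_n=\varepsilon _n h_n$, the homogeneity relation \eqref{Eq:EntrHom} gives $\nm {f_n}{L^2}\to 0$ while $E_\phi (f_n)=\varepsilon _n^2E_\phi (h_n)=\varepsilon _n^2\big (\log N_n+O(1)\big )+o(1)\to 1$, so $E_\phi $ is discontinuous at $0$ on $M^2(\rr d)$.

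The main obstacle is the infinite-measure tail in part (1): the integrand $-t^2\log (t^2)$ is not dominated by $t^2$ near origin, so on $M^2(\rr d)$ there is no equi-integrability of $\{|V_\phi f_n|^2\}_n$ of the kind used above, and both the functional and the continuity argument genuinely break down there --- which is precisely the mechanism behind (2). A secondary technical point in (2) is controlling the cross terms in $|V_{\phi _0}h_n|^2$ and passing from a general window $\phi \in M^1(\rr d)$ to the Gaussian, which is where Lemma \ref{Lem:DiscontEntrOnL2} enters.
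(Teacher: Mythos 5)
Your argument is correct in outline but follows a genuinely different route from the paper in both parts. For (1), the paper first proves continuity at the origin by splitting the phase plane into the three regions where $|V_\phi f|$ is below $\lambda e^{-\frac 23}$, between $\lambda e^{-\frac 23}$ and $\lambda$, and above $\lambda$ (with $\lambda \asymp \nm f{M^\Phi}$), and then handles a general point by a second decomposition over $\Omega =\sets {(x,\xi )}{|V_\phi f|>R|V_\phi g|}$ and its complement, letting $R\to \infty$; your Vitali-type argument (norm convergence implies modular convergence under the $\Delta _2$-condition, hence equi-integrability and tightness of $\Phi (|V_\phi f_n|)$, combined with an $L^2$/Lipschitz argument on the finite-measure super-level set) treats every point at once and is arguably more transparent, at the price of having to justify that convergence of the modular upgrades to $L^1$-convergence of $\Phi (|V_\phi f_n|)$ (a.e. convergence along subsequences plus Scheff{\'e}); these steps do go through for the $\Phi$ of \eqref{Eq:SpecOrlModSpIntro}, and you should also record, as the paper's Lemma \ref{Lem:EntropyCont} does, that the entropy integral is finite on $M^\Phi (\rr d)$ before manipulating it. For (2), the paper uses the one-parameter family of dilated Gaussians $f_\lambda$, for which $E_{\phi _0}(f_\lambda )$ has the exact closed form $d\big (\frac 14+\log (\pi (\lambda ^{\frac 12}+\lambda ^{-\frac 12}))\big )\to \infty$ while $\nm {f_\lambda}{L^2}=1$; your sum-of-time-frequency-shifts construction reaches the same unboundedness on the unit sphere and the same homogeneity rescaling finishes the proof, but it obliges you to control the cross terms, which is not entirely innocent since $t\mapsto -t^2\log (t^2)$ is not Lipschitz at $0$ --- the Gaussian decay saves you, but this needs to be written out, whereas the paper's computation is exact. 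One small imprecision: Lemma \ref{Lem:DiscontEntrOnL2} only yields the one-sided bound $E_\phi (f_n)\gtrsim E_{\phi _0}(f_n)-\nm {f_n}{L^2}^2$ for a general window $\phi \in M^1(\rr d)\setminus 0$, so your displayed identity $E_\phi (f_n)=\varepsilon _n^2(\log N_n+O(1))+o(1)$ should be stated as a lower bound, which still gives $E_\phi (f_n)\not \to 0$ and hence the asserted discontinuity.
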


\par

We need some preparations for the proof of
Theorem \ref{Thm:EntropyCont}. First we observe that
$M^{\Phi}(\rr d)$ is in some sense close to $M^2(\rr d)$.

\par

\begin{lemma}\label{Lem:OrlModCloseM2}
Let $\Phi$ be a Young function which satisfies
\eqref{Eq:SpecOrlModSpIntro}. Then
\begin{equation}\label{Eq:NarrowSpaces1}
M^p(\rr d)\subseteq M^\Phi (\rr d) \underset{\text{dense}}
\subseteq M^2(\rr d),
\qquad p<2,
\end{equation}
with continuous inclusions, and
\begin{equation}\label{Eq:NarrowSpaces2}
\lim _{p\to 2-}\nm f{M^p}=\nm f{M^2},
\quad \text{when}\quad
f\in M^{p_0}(\rr d),
\ \text{for some}\ 1\le p_0<2.
\end{equation}
\end{lemma}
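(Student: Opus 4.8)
The plan is to prove the three assertions in the statement separately: the chain of embeddings in \eqref{Eq:NarrowSpaces1}, the density of $M^\Phi(\rr d)$ in $M^2(\rr d)$, and the norm convergence \eqref{Eq:NarrowSpaces2}. Throughout I would use that, by Definition \ref{Def:Orliczmod}, all of $M^p(\rr d)$, $M^\Phi(\rr d)$ and $M^2(\rr d)$ are defined with the \emph{same} fixed Gaussian window $\phi$, so that $\nm f{M^r}=\nm {V_\phi f}{L^r(\rr {2d})}$ for $r\in\{p,2\}$, and by Remark \ref{Rem:PhiLeb} that $M^2(\rr d)=L^2(\rr d)$ corresponds to $\Phi _{[2]}(t)=t^2/2$ and $M^p(\rr d)$ to $\Phi _{[p]}(t)=t^p/p$ (with $M^{\Phi,\Phi}=M^\Phi$ by Proposition \ref{Prop:BasicPropOrlModSp1}(1)).

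For the embeddings I would first record the elementary near-origin asymptotics of $\Phi(t)=-t^2\log t=t^2|\log t|$ (valid for $0<t<1$): on the one hand $t^2\lesssim\Phi(t)$ since $|\log t|\to\infty$ as $t\to 0+$, and on the other hand, for each fixed $p<2$, $\Phi(t)=t^p\cdot\bigl(t^{2-p}|\log t|\bigr)\lesssim t^p$ since $t^{2-p}|\log t|\to 0$. Thus $\Phi _{[2]}(t)\lesssim\Phi(t)\lesssim\Phi _{[p]}(t)$ near the origin, and Proposition \ref{Prop:OrliczModInvariance} (Orlicz modulation spaces are determined by the defining Young functions near the origin) immediately yields $M^p(\rr d)\subseteq M^\Phi(\rr d)\subseteq M^2(\rr d)$. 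Continuity of the inclusions then follows from the closed graph theorem, since each of the three spaces is a Banach space continuously embedded in $\Sigma _1'(\rr d)$ (Proposition \ref{Prop:BasicPropOrlModSp1}); alternatively, explicit norm bounds can be extracted from the comparison argument behind Proposition \ref{Prop:OrliczModInvariance}.

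For the density I would invoke Proposition \ref{Prop:BasicPropOrlModSp1}(3) to get $\Sigma _1(\rr d)\subseteq M^\Phi(\rr d)$, and then use that $\Sigma _1(\rr d)$ is dense in $L^2(\rr d)=M^2(\rr d)$; hence $M^\Phi(\rr d)$ is dense in $M^2(\rr d)$ (equivalently, one could use $M^p(\rr d)\subseteq M^\Phi(\rr d)$ for $p<2$ together with the classical density of $\bigcup_{p<2}M^p(\rr d)$ in $M^2(\rr d)$). Finally, for \eqref{Eq:NarrowSpaces2}, take $f\in M^{p_0}(\rr d)$ with $1\le p_0<2$ and put $g=V_\phi f$; by the embeddings just established $g\in L^{p_0}(\rr {2d})\cap L^2(\rr {2d})$, so $g\in L^p(\rr {2d})$ for all $p\in[p_0,2]$ and $\nm f{M^p}=\nm g{L^p}$. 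It then remains to show $\nm g{L^p}\to\nm g{L^2}$ as $p\to 2-$ (the case $g=0$ being trivial): for $p\in[p_0,2]$ and $z\in\rr {2d}$ one has the pointwise bound $|g(z)|^p\le|g(z)|^{p_0}+|g(z)|^2$ (according as $|g(z)|\le 1$ or $|g(z)|>1$), with integrable majorant $|g|^{p_0}+|g|^2$, and $|g(z)|^p\to|g(z)|^2$ pointwise; dominated convergence gives $\int_{\rr {2d}}|g(z)|^p\, dz\to\int_{\rr {2d}}|g(z)|^2\, dz$, and since also $1/p\to 1/2$ we conclude $\nm g{L^p}=\bigl(\int_{\rr {2d}}|g(z)|^p\, dz\bigr)^{1/p}\to\bigl(\int_{\rr {2d}}|g(z)|^2\, dz\bigr)^{1/2}=\nm g{L^2}$.

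I do not expect a serious obstacle; the argument is essentially bookkeeping around two elementary facts — the asymptotic comparison $t^2\lesssim-t^2\log t\lesssim t^p$ near $0$, and the uniform integrable majorant $|g|^{p_0}+|g|^2$ for the family $\{|g|^p\}_{p_0\le p\le 2}$. The only points that demand care are applying Proposition \ref{Prop:OrliczModInvariance} with the inequalities in the correct direction, and keeping in mind that a single fixed window is used for all the spaces, which is precisely what reduces \eqref{Eq:NarrowSpaces2} to a plain application of dominated convergence.
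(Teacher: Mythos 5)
Your proof is correct and follows essentially the same route as the paper: the near-origin comparison $t^2\lesssim -t^2\log t\lesssim t^p$ combined with Proposition \ref{Prop:OrliczModInvariance} for the inclusions, density inherited from a dense subspace of $L^2$ contained in $M^\Phi(\rr d)$, and a measure-theoretic limit argument for \eqref{Eq:NarrowSpaces2}. The only difference is that you spell out the dominated-convergence step (with the majorant $|g|^{p_0}+|g|^2$) that the paper delegates to the exercises in Rudin, which is a welcome addition rather than a deviation.
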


\par

For the limit in \eqref{Eq:NarrowSpaces2} it is understood that the
same window function is used in the modulation space norms.

\par

\begin{proof}
By Proposition \ref{Prop:OrliczModInvariance} it follows
that $M^\Phi (\rr d)$
is independent of the choice of $\Phi$ outside the
interval $[0,e^{-\frac 23}]$. It is therefore no restriction
to assume that $\Phi$ is given by
\begin{equation}\tag*{(\ref{Eq:EntropyFunc})$'$}
\Phi (t)
=
\begin{cases}
-t^2\log t , & t\in [0,e^{-\frac 23}],
\\[1ex]
\frac 13e^{-\frac 23}(t+e^{-\frac 23}),
& t\in (e^{-\frac 23},\infty ),
\\[1ex]
\infty , & t=\infty ,
\end{cases}
\end{equation}
which is obviously a Young function.

\par

By Remark \ref{Rem:PhiLeb} and the limits
$$
\lim _{t\to 0+}\frac {t^2}{\Phi (t)}
=
\lim _{t\to 0+}-\frac {t^2}{t^2\log t}=0
\quad \text{and}\quad
\lim _{t\to 0+}\frac {t^2}{\Phi (t)}
=
\lim _{t\to 0+}-\frac {t^p}{t^2\log t}=\infty ,
$$
when $p<2$, it follows from
Proposition \ref{Prop:OrliczModInvariance}
that the inclusions in \eqref{Eq:NarrowSpaces1} holds
and are continuous.
Since $M^p(\rr d)$ is dense in $M^2(\rr d)$, it follows that
also $M^\Phi (\rr d)$ is dense in $M^2(\rr d)$.

\par

The limit in \eqref{Eq:NarrowSpaces2} follows by straight-forward
computations in measure theory (cf. e.{\,}g. the exercise part of
Chapter 3 in \cite{Rud}).
\end{proof}

\par

\begin{rem}\label{Rem:NonEqualOrlModSp}
Let $\Phi$ be a Young function which satisfies
\eqref{Eq:SpecOrlModSpIntro}.
A consequence of Theorem \ref{Thm:EntropyCont},
Lemma \ref{Lem:OrlModCloseM2} and the open
mapping theorem is that $M^\Phi (\rr d)\subsetneq M^2(\rr d)$.
\end{rem}

\par

Next we show
that $E_\phi$ is well-defined and finite on $M^\Phi (\rr d)$.

\par

\begin{lemma}\label{Lem:EntropyCont}
Let $\Phi$ be a Young function which satisfies
\eqref{Eq:SpecOrlModSpIntro}, $f,\phi \in M^2(\rr d)$.
Then the following is true:
\begin{enumerate}
\item $|V_\phi f(x,\xi )|^2 \log _+|V_\phi f(x,\xi )|\in L^1(\rr {2d})$
and
$$
\iint _{\rr {2d}}
|V_\phi f(x,\xi )|^2 \log |V_\phi f(x,\xi )| \, dxd\xi \in [-\infty ,\infty )
\text ;
$$

\vrum

\item if in addition $f\in M^\Phi (\rr d)$ and $\phi \in M^1(\rr d)$,
then
$$
\iint _{\rr {2d}}
|V_\phi f(x,\xi )|^2 \big | \log |V_\phi f(x,\xi )| \big |\, dxd\xi <\infty .
$$
\end{enumerate}
\end{lemma}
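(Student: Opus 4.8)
The plan is to split the integrand according to whether $|V_\phi f(x,\xi)|$ is small or large, since the logarithm contributes with opposite signs in the two regimes. Write $S=\sets{(x,\xi)}{|V_\phi f(x,\xi)|\ge 1}$ and $S'=\rr{2d}\setminus S$. For part (1), on $S$ we use the elementary bound $\log t\lesssim t^\delta$ for $t\ge 1$ and any $\delta>0$; choosing $\delta$ small this gives $|V_\phi f|^2\log_+|V_\phi f|\lesssim |V_\phi f|^{2+\delta}$ on $S$. Since $f,\phi\in M^2(\rr d)$, interpolating between $M^2=L^2$-type control of $V_\phi f$ and the uniform bound $|V_\phi f|\le 1$ on $S$ shows $|V_\phi f|^{2+\delta}\in L^1(S)$; equivalently one may observe directly that $V_\phi f\in L^2\cap L^\infty(\rr{2d})$ (the $L^\infty$ bound is Cauchy--Schwarz, $|V_\phi f(x,\xi)|\le (2\pi)^{-d/2}\nm f{L^2}\nm\phi{L^2}$), hence $V_\phi f\in L^r(\rr{2d})$ for every $r\in[2,\infty]$, in particular for $r=2+\delta$. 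Thus $|V_\phi f|^2\log_+|V_\phi f|\in L^1(\rr{2d})$. On $S'$ the quantity $|V_\phi f|^2\log|V_\phi f|$ is nonpositive and bounded below pointwise only by $-\infty$ in principle, but the integral $\iint_{S'}|V_\phi f|^2\log|V_\phi f|\,dxd\xi$ is well-defined in $[-\infty,0]$ as a Lebesgue integral of a nonpositive function; adding the finite contribution from $S$ gives the asserted membership in $[-\infty,\infty)$.

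For part (2), the point is to control the negative part, i.e. the integral over $S'$, using the extra hypothesis $f\in M^\Phi(\rr d)$ and $\phi\in M^1(\rr d)$. First, by Proposition \ref{Prop:BasicPropOrlModSp1}(2) the window in the definition of $M^\Phi$ may be replaced by $\phi\in M^1(\rr d)\setminus 0$, so that $\nm{V_\phi f}{L^\Phi}<\infty$ with this $\phi$; here I use that $M^1\subseteq M^1_{(v)}$ trivially for $v\equiv 1$, so $\phi$ is an admissible window. Now on $S'$ we have $0\le |V_\phi f|\le 1$, and there $\big|\,|V_\phi f|^2\log|V_\phi f|\,\big| = -|V_\phi f|^2\log|V_\phi f| = \Phi(|V_\phi f|)$ as long as $|V_\phi f|\le e^{-2/3}$, and on the remaining set where $e^{-2/3}<|V_\phi f|\le 1$ the integrand is bounded by a constant times $|V_\phi f|^2$, which is integrable by Moyal's identity \eqref{Eq:Moyal}. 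So it suffices to bound $\iint_{|V_\phi f|\le e^{-2/3}}\Phi(|V_\phi f(x,\xi)|)\,dxd\xi$. By the definition of the Orlicz norm, for $\lambda=\nm{V_\phi f}{L^\Phi}$ (assume $\lambda>0$, else $f=0$ and there is nothing to prove) we have $\iint \Phi(|V_\phi f|/\lambda)\,dxd\xi\le 1$. If $\lambda\le 1$ this does not immediately give what we want because $\Phi$ is increasing, so $\Phi(|V_\phi f|)\le\Phi(|V_\phi f|/\lambda)$ only when $\lambda\le 1$ — which is precisely the favourable direction. When $\lambda>1$ one rescales: by the homogeneity-type estimate for $\Phi$ near origin (using that $\Phi(ct)\lesssim c^2\log(1/c)^{-1}\cdots$, or more simply the $\Delta_2$-type comparison $\Phi(\lambda t)\le C_\lambda\Phi(t)$ valid on a neighbourhood of $0$ for each fixed $\lambda$, which follows from the explicit form $\Phi(t)=-t^2\log t$), one gets $\iint_{|V_\phi f|\le e^{-2/3}}\Phi(|V_\phi f|)\,dxd\xi\le C_\lambda\iint\Phi(|V_\phi f|/\lambda)\,dxd\xi\le C_\lambda<\infty$. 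Combining with part (1), which handles $\iint_S|V_\phi f|^2\log_+|V_\phi f|\,dxd\xi<\infty$, yields $\iint_{\rr{2d}}|V_\phi f|^2\,\big|\log|V_\phi f|\,\big|\,dxd\xi<\infty$.

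The main obstacle I anticipate is the bookkeeping near the cutoff value $t=e^{-2/3}$ where the formula $\Phi(t)=-t^2\log t$ is only assumed to hold, together with the rescaling step when $\nm{V_\phi f}{L^\Phi}>1$: one must verify that the local $\Delta_2$-type inequality $\Phi(\lambda t)\lesssim_\lambda \Phi(t)$ for $t$ near $0$ genuinely holds for $\Phi(t)=-t^2\log t$ (it does, since $\frac{\Phi(\lambda t)}{\Phi(t)}=\lambda^2\frac{\log(1/(\lambda t))}{\log(1/t)}\to\lambda^2$ as $t\to 0+$), and then patch the region $e^{-2/3}<|V_\phi f|\le 1$ separately using Moyal's identity. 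Everything else — the $L^\infty$ bound on $V_\phi f$, the $L^r$ interpolation, and the window change — is routine given the results already quoted in the excerpt.
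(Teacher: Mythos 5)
Your proposal is correct and follows essentially the same route as the paper: split according to the size of $|V_\phi f|$, use $V_\phi f\in L^2\cap L^\infty$ (Moyal plus $M^2\subseteq M^\infty$) for the region where the STFT is large or of moderate size, and use the Orlicz modular bound $\iint\Phi(|V_\phi f|/\lambda)\,dxd\xi\le 1$ for the region where it is small. The only (harmless) cosmetic difference is that you cut at the absolute thresholds $e^{-2/3}$ and $1$ and compensate with the rescaling $\Phi(\lambda t)\le\lambda^2\Phi(t)$, whereas the paper cuts at $\lambda e^{-2/3}$ and $\lambda$ with $\lambda\asymp\nm f{M^\Phi}$ and extracts the extra term $|\log\lambda|\,\nm f{M^2}^2$ instead.
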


\par

\begin{proof}
The assertion (1) follows from the fact that $V_\phi f \in
L^2(\rr {2d})\cap L^\infty (\rr {2d})$, in view of Moyal's
identity and the embedding $M^2(\rr d)\subseteq M^\infty (\rr d)$,
ensured by Proposition \ref{Prop:BasicPropOrlModSp1} (3).

\par

Since (2) is obviously true when $f$ or $\phi$ are identically equal to zero,
we may assume that $f\in M^\Phi (\rr d)\setminus 0$ and
$\phi \in M^1(\rr d)\setminus 0$.
Let $\phi$ be chosen as the window function in our modulation space norms,
$C>1$ be a fixed constant and for every
$f\in M^\Phi (\rr d)$, choose the number $\lambda =\lambda _f$
such that
$$
\nm f{M^\Phi} < \lambda <C\nm f{M^\Phi}.
$$
For conveniency we also let $F=V_\phi f$,
\begin{align*}
\Omega _1 &= \sets {(x,\xi )\in \rr {2d}}{|F(x,\xi )|\le \lambda e^{-\frac 23}},
\\[1ex]
\Omega _2 &= \sets {(x,\xi )\in \rr {2d}}
{\lambda e^{-\frac 23}\le |F(x,\xi )|\le \lambda}
\intertext{and}
\Omega _3 &= \sets {(x,\xi )\in \rr {2d}}{|F(x,\xi )|\ge \lambda}.
\end{align*}
Then
$$
\left |
-\iint _{\rr {2d}}|F(x,\xi )|^2\log |F(x,\xi )|\, dxd\xi
\right |
\le
\sum _{k=1}^4J_k(f),
$$
where
\begin{align*}
J_k(f)
&=
\lambda ^2
\left |
\iint _{\Omega _k}
\left (
\frac {|F(x,\xi )|}\lambda
\right )^2
\log
\left  (
\frac {|F(x,\xi )|}\lambda
\right )
\, dxd\xi 
\right |,
\quad k=1,2,3,
\intertext{and}
J_4(f)
&=
|\log \lambda |\cdot \nm f{M^2}^2,
\end{align*}
and the result follows if we prove
\begin{equation}\label{Eq:EntrIntFinitePart}
J_k(f) <\infty ,
\qquad k=1,2,3,4.
\end{equation}

\par

By the definition of $\Phi$ and the Orlicz modulation space
norm, we have
$$
J_1(f)\le \lambda ^2 \le C^2\nm f{M^\Phi}^2<\infty ,
$$
which shows that \eqref{Eq:EntrIntFinitePart} holds
for $k=1$.

\par

In order to prove \eqref{Eq:EntrIntFinitePart} for $k=2$
and $k=3$ we recall that
$\nm f{M^\infty}\lesssim \nm f{M^2}\lesssim \nm f{M^\Phi}$,
which implies that $|F(x,\xi )|\lesssim \nm f{M^\Phi}$. On
the other hand,  
$|F(x,\xi )|\gtrsim \nm f{M^\Phi}$ when
$(x,\xi )\in \complement \Omega _1$.
A combination of these relations yields
$|F(x,\xi )|\asymp \nm f{M^\Phi}$ when $(x,\xi )\in \complement
\Omega _1$,
which implies that the logarithm in the integral expression
of $J_k(f)$
is bounded when $k=2$ or $k=3$. This gives
$$
0\le J_k(f)
\lesssim
\iint _{\complement \Omega _1}
|F(x,\xi )|^2\, dxd\xi 
\le
\nm f{M^2}^2\lesssim \nm f{M^\Phi}^2,
\quad
k=2,3,
$$
and \eqref{Eq:EntrIntFinitePart}
follows in the cases $k=2$ and $k=3$.

\par

Finally, for $J_4(f)$ we have
$$
0\le J_3(f)
\le
|\log \lambda |\nm f{M^2}^2
\lesssim
|\log \lambda | \nm f{M^\Phi}^2<\infty ,
$$
and the result follows.
\end{proof}

\par

The next lemma gives an essential step
when deducing the asserted continuity in
Theorem \ref{Thm:EntropyCont}

\par

\begin{lemma}\label{Lem:PartEntropyCont}
Let $\Phi$ and $\phi$ be the same as in
Lemma \ref{Lem:EntropyCont}. Then
\begin{equation}\label{Eq:EntrBasicCont}
M^\Phi (\rr d)\ni f\mapsto \iint _{\rr {2d}}
|V_\phi f(x,\xi )|^2 \big | \log |V_\phi f(x,\xi )| \big |\, dxd\xi
\end{equation}
is continuous near origin.
\end{lemma}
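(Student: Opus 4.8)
The plan is to follow the region decomposition used in the proof of Lemma~\ref{Lem:EntropyCont}, keeping track this time of how the resulting bounds depend on $\nm f{M^\Phi}$. Fix $C>1$, and for $f\in M^\Phi(\rr d)$ of small norm pick $\lambda =\lambda _f$ with $\nm f{M^\Phi}<\lambda <C\nm f{M^\Phi}$; then $\lambda \asymp \nm f{M^\Phi}$, and since $\Phi$ is non-decreasing we have $\iint _{\rr {2d}}\Phi (|V_\phi f|/\lambda )\, dxd\xi \le 1$ by the definition of the Orlicz modulation norm. Write $F=V_\phi f$ and split $\rr {2d}=\Omega _1\cup \complement \Omega _1$, where $\Omega _1=\sets {(x,\xi )}{|F(x,\xi )|\le \lambda e^{-\frac 23}}$, so that $\Phi (|F|/\lambda )=(|F|/\lambda )^2\,|\log (|F|/\lambda )|$ on $\Omega _1$, by \eqref{Eq:SpecOrlModSpIntro}.

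On $\Omega _1$ I would use $\big|\log |F|\big|\le \big|\log (|F|/\lambda )\big|+|\log \lambda |$, which together with Moyal's identity \eqref{Eq:Moyal} and the continuous embedding $M^\Phi(\rr d)\subseteq M^2(\rr d)$ from Lemma~\ref{Lem:OrlModCloseM2} gives
\begin{align*}
\iint _{\Omega _1}|F|^2\,\big|\log |F|\big|\, dxd\xi
&\le
\lambda ^2\iint _{\Omega _1}\Phi (|F|/\lambda )\, dxd\xi +|\log \lambda |\iint _{\Omega _1}|F|^2\, dxd\xi
\\
&\le
\lambda ^2+|\log \lambda |\,\nm {V_\phi f}{L^2}^2
\lesssim
\big (1+|\log \lambda |\big )\,\nm f{M^\Phi}^2 .
\end{align*}
On the complementary region I would argue exactly as in Lemma~\ref{Lem:EntropyCont}: from $\nm f{M^\infty}\lesssim \nm f{M^2}\lesssim \nm f{M^\Phi}$ (Proposition~\ref{Prop:BasicPropOrlModSp1}(3), Lemma~\ref{Lem:OrlModCloseM2} and $\phi \in M^1(\rr d)\setminus 0$) one gets $|F(x,\xi )|\lesssim \nm f{M^\Phi}$, while $|F(x,\xi )|\ge \lambda e^{-\frac 23}\gtrsim \nm f{M^\Phi}$ on $\complement \Omega _1$; hence $|F(x,\xi )|\asymp \nm f{M^\Phi}$ there, so that $\big|\log |F(x,\xi )|\big|\le \big|\log \nm f{M^\Phi}\big|+C'$ once $\nm f{M^\Phi}$ is small, and therefore
\begin{equation*}
\iint _{\complement \Omega _1}|F|^2\,\big|\log |F|\big|\, dxd\xi
\lesssim
\big (1+\big|\log \nm f{M^\Phi}\big|\big )\,\nm {V_\phi f}{L^2}^2
\lesssim
\big (1+\big|\log \nm f{M^\Phi}\big|\big )\,\nm f{M^\Phi}^2 .
\end{equation*}

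Adding the two contributions yields $\iint _{\rr {2d}}|V_\phi f|^2\,\big|\log |V_\phi f|\big|\, dxd\xi \lesssim \big (1+\big|\log \nm f{M^\Phi}\big|\big )\,\nm f{M^\Phi}^2$, and since $t^2|\log t|\to 0$ as $t\to 0+$ the right-hand side tends to $0$ as $\nm f{M^\Phi}\to 0$; as the functional vanishes at $f=0$, this is the asserted continuity near the origin. The only genuinely delicate point is the bookkeeping of the logarithmic factors: on $\Omega _1$ the factor $\big|\log (|F|/\lambda )\big|$ must be absorbed into $\Phi$ itself, which works precisely because $\Phi (t)=-t^2\log t$ near origin, while the leftover factors $|\log \lambda |$ and $\big|\log \nm f{M^\Phi}\big|$ are harmless because they multiply $\nm f{M^\Phi}^2$; the uniform two-sided bound $|V_\phi f|\asymp \nm f{M^\Phi}$ on $\complement \Omega _1$, inherited from the proof of Lemma~\ref{Lem:EntropyCont}, makes the large-values region immediate. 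If one needs continuity throughout a whole neighborhood of the origin rather than only at the origin, the estimate above has to be supplemented with a mean value bound for $z\mapsto |z|^2\log |z|$ together with the Cauchy--Schwarz inequality restricted to large balls, the tails being controlled by the same estimate; this last upgrade is where any real work would lie.
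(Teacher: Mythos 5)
Your argument is correct and follows essentially the same route as the paper: the same choice of $\lambda$ with $\nm f{M^\Phi}<\lambda <C\nm f{M^\Phi}$, the same splitting into the region where $|V_\phi f|\le \lambda e^{-\frac 23}$ (where $(|F|/\lambda )^2|\log (|F|/\lambda )|=\Phi (|F|/\lambda )$ integrates to at most $1$ by the Luxemburg norm) and its complement (where $|F|\asymp \nm f{M^\Phi}$ makes the logarithm harmless), plus the leftover $|\log \lambda |\,\nm f{M^2}^2$ term. Your closing remark is also apt: like your proof, the paper's proof establishes continuity only at the origin, which is all that is used later.
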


\par

\begin{proof}
The result follows if we prove
\begin{equation}\label{Eq:EntrIntLimit}
%\begin{gathered}
-\iint _{\rr {2d}}|V_\phi f(x,\xi )|^2
\left |
\log |V_\phi f(x,\xi )|
\right |
\, dxd\xi \to 0
%\\[1ex]
\quad \text{as}\quad
\nm f{M^\Phi}\to 0,\ f\in M^\Phi (\rr d).
%\end{gathered}
\end{equation}

\par

Let $\phi$, $C$, $\lambda$ and $J_k(f)$ be the same as in
the proof of Lemma \ref{Lem:EntropyCont}.
Then
$$
\left |
\iint _{\rr {2d}}|V_\phi f(x,\xi )|^2
\left |
\log |V_\phi f(x,\xi )|
\right |
\, dxd\xi
\right |
\le
\sum _{k=1}^4J_k(f),
$$
and \eqref{Eq:EntrIntLimit} follows if we prove
\begin{equation}\label{Eq:EntrIntLimitPart}
J_k(f) \to 0
%\\[1ex]
\quad \text{as}\quad
\nm f{M^\Phi}\to 0,\ f\in M^\Phi (\rr d),
\qquad k=1,2,3,4.
\end{equation}

\par

By the definition of $\Phi$ and the Orlicz modulation space
norm, we have
$$
0\le J_1(f)\le \lambda ^2 \le C^2\nm f{M^\Phi}^2\to 0
\quad \text{as}\quad \nm f{M^\Phi}\to 0,
$$
which shows that \eqref{Eq:EntrIntLimitPart} holds
for $k=1$.

\par

In order to prove \eqref{Eq:EntrIntLimitPart} for $k=2$
and $k=3$ we recall from the proof of
Lemma \ref{Lem:EntropyCont} that the logarithm in
\eqref{Eq:EntrIntLimitPart}
is bounded when $k=2$ or $k=3$. This gives
$$
0\le J_k(f)
\lesssim
\iint _{\Omega _k}
|V_\phi f(x,\xi )|^2\, dxd\xi 
\le
\nm f{M^2}^2\lesssim \nm f{M^\Phi}^2\to 0
$$
as $\nm f{M^\Phi}\to 0$, and \eqref{Eq:EntrIntLimitPart}
follows in the cases $k=2$ and $k=3$.

\par

For $J_4(f)$ with $\nm f{M^\Phi}\le 1$ we have
$$
0\le J_4(f)
\le
\nm f{M^2}^2|\log \nm f{M^\Phi}|
\lesssim
\nm f{M^\Phi}^2|\log \nm f{M^\Phi}| \to 0
$$
as $\nm f{M^\Phi}\to 0$. This gives
\eqref{Eq:EntrIntLimitPart}
in the case $k=4$, and
\eqref{Eq:EntrIntLimitPart} follows,
and we have proved that the map \eqref{Eq:EntrBasicCont}
is continuous at origin.
\end{proof}

%\par
%
%By combining Lemmas \ref{Lem:EntropyCont}
%and \ref{Lem:PartEntropyCont} we get the following.
%
%\par
%
%\begin{lemma}\label{Lem:EntropyCont2}
%Let $\Phi$ be a Young function which satisfies
%\eqref{Eq:SpecOrlModSpIntro} and $\phi \in M^1(\rr d)$.
%Then $E_\phi (f)\in \mathbf R$ when $f\in M^\Phi (\rr d)$,
%and $E_\phi$ is continuous at origin.
%\end{lemma}

\par

The next lemma concerns estimates
for $E_\phi$ in transitions between different
window functions $\phi$. The result
is needed in the proof of discontinuity
of $E_\phi$ on $M^2(\rr d)$. 

\par

\begin{lemma}\label{Lem:DiscontEntrOnL2}
Let $\Phi$ be a Young function and
$\phi ,\psi \in M^1(\rr d)\setminus 0$.
Then there is a constant $C$ which only depends on
$\phi$ and $\psi$ such that
\begin{equation}\label{Eq:DiscontEntrOnL2}
E_\phi (f)\le C(E_\psi (f) +\nm f{L^2}^2),
\qquad
f\in M^2(\rr d).
\end{equation}
\end{lemma}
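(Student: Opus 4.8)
The plan is to reduce to normalised windows and input, rewrite $E_\phi(f)$ as a modular integral $\iint\Phi_0(|V_\phi f|)$ against a genuine (convex, $\Delta_2$) Young function $\Phi_0$ that is comparable to $-t^2\log t$ on the range actually attained by $|V_\phi f|$, and then pass from the window $\psi$ to the window $\phi$ by the standard change‑of‑window inequality together with Jensen's inequality. Concretely, I would first reduce to the case $\nm f{L^2}=\nm\phi{L^2}=\nm\psi{L^2}=1$: both sides of \eqref{Eq:DiscontEntrOnL2} are homogeneous of degree $2$ in $f$, and by \eqref{Eq:EntrHom} replacing $\phi$ by $\phi/\nm\phi{L^2}$ and $\psi$ by $\psi/\nm\psi{L^2}$ only rescales $E_\phi$ and $E_\psi$ by fixed positive factors depending on $\phi,\psi$ (the case $f=0$ being trivial). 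Under this normalisation Moyal's identity \eqref{Eq:Moyal} kills the last term in \eqref{Eq:EntropyFunc}, so $E_\phi(f)=-\iint_\rdd|V_\phi f|^2\log|V_\phi f|^2\,dxd\xi=2\iint_\rdd\mathcal E(|V_\phi f|)\,dxd\xi$ with $\mathcal E(t)=-t^2\log t$, and similarly for $\psi$. Moreover, by Cauchy--Schwarz $|V_\phi f|\le(2\pi)^{-d/2}<e^{-1/2}$ pointwise, so $\mathcal E(|V_\phi f|)\ge0$ and $|V_\phi f|,|V_\psi f|$ take values only in the fixed interval $I=[0,(2\pi)^{-d/2}]$; all integrals below are thus of non‑negative functions and the estimates make sense in $[0,\infty]$.

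Next I would fix, once and for all, a Young function $\Phi_0$ which coincides with $-t^2\log t$ on $[0,e^{-3/2}]$ (the largest interval on which $-t^2\log t$ is convex) and is affine on $[e^{-3/2},\infty)$, the two pieces glued in a $C^1$ fashion; being affine near infinity, $\Phi_0$ satisfies the global $\Delta_2$-condition. Since $-t^2\log t$ is increasing on $I$ and concave on $[e^{-3/2},(2\pi)^{-d/2}]$, its tangent line at $e^{-3/2}$ lies above it there, and a short compactness argument on the compact interval $[e^{-3/2},(2\pi)^{-d/2}]$ yields a constant $\kappa\ge1$ (depending only on $d$) with $\mathcal E(t)\le\Phi_0(t)\le\kappa\,\mathcal E(t)$ for all $t\in I$. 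Consequently $E_\phi(f)\le2\iint_\rdd\Phi_0(|V_\phi f|)\,dxd\xi$ while $2\iint_\rdd\Phi_0(|V_\psi f|)\,dxd\xi\le\kappa\,E_\psi(f)$, so the whole matter reduces to showing that the modular $\iint_\rdd\Phi_0(|V_\bullet f|)$ changes by at most a multiplicative constant under a change of window.

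For this I would invoke the pointwise change‑of‑window inequality (see e.g.\ \cite{Gro1}): with $\nm\psi{L^2}=1$ there is a dimensional constant $c_d$ such that $|V_\phi f(z)|\le c_d\,(|V_\psi f|*K)(z)$ on $\rdd$, where $K=|V_\phi\psi|$ is non‑negative and lies in $L^1(\rdd)$ with $M:=\nm K{L^1}<\infty$, since $\phi\in M^1(\rd)$ and $\psi\in M^1(\rd)\setminus0$. Writing $(|V_\psi f|*K)(z)=M\int_\rdd|V_\psi f|(z-w)\,d\mu(w)$ with $d\mu=M^{-1}K\,dw$ a probability measure, the monotonicity of $\Phi_0$, Jensen's inequality (convexity of $\Phi_0$), Fubini and translation invariance of Lebesgue measure give $\iint_\rdd\Phi_0(|V_\phi f|)\,dz\le\iint_\rdd\Phi_0\big(c_dM\,|V_\psi f|\big)\,dz$, and the $\Delta_2$-condition then gives $\Phi_0(c_dM\,t)\le C'\,\Phi_0(t)$ for a constant $C'$ depending only on $\phi$ and $\psi$. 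Combining with the previous paragraph, $E_\phi(f)\le2C'\iint_\rdd\Phi_0(|V_\psi f|)\,dz\le C'\kappa\,E_\psi(f)\le C'\kappa\,(E_\psi(f)+\nm f{L^2}^2)$, and undoing the normalisation of the first step yields \eqref{Eq:DiscontEntrOnL2}.

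The main obstacle is exactly the point addressed in the second paragraph: $-t^2\log t$ is a genuine convex Young function only on $[0,e^{-3/2}]$ — it becomes concave on $[e^{-3/2},e^{-2/3}]$, a range that meets that of $|V_\phi f|$ when $d=1$ — and, after the change of window, its argument is no longer small, so it cannot be fed directly into Jensen's inequality; replacing $\mathcal E$ by the comparable $\Delta_2$ Young function $\Phi_0$ and verifying the two‑sided comparison $\mathcal E\asymp\Phi_0$ on $I$ is what requires care, everything else being routine. I note in passing that this argument in fact proves the stronger estimate $E_\phi(f)\le C\,E_\psi(f)$, so the additive term $\nm f{L^2}^2$ in \eqref{Eq:DiscontEntrOnL2} is not actually needed.
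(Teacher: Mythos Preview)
Your argument is correct and shares the paper's core strategy: the change-of-window inequality $|V_\phi f|\le |V_\psi f|*|V_\phi\psi|$ combined with Jensen's inequality for a convex function comparable to $\mathcal E(t)=-t^2\log t$. The execution differs in how the convexity range is secured. The paper normalises $f$ (rather than the windows) so that $\|V_\psi f\|_{L^\infty}$ is forced below a fixed threshold, keeping $|V_\psi f|$ inside the interval where $\mathcal E$ itself is convex, and then applies Jensen directly to $\mathcal E$; the price is that undoing this normalisation produces the additive $\nm f{L^2}^2$ on the right-hand side. You instead normalise only the windows, use the Cauchy--Schwarz bound $|V_\phi f|\le (2\pi)^{-d/2}$, and replace $\mathcal E$ by a globally convex $\Delta_2$ Young function $\Phi_0$ that is two-sided comparable to $\mathcal E$ on this fixed range; this lets the convolution constant $c_dM$ be absorbed via $\Delta_2$ instead of via normalisation. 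Your route is a bit cleaner and, as you note, actually yields the sharper inequality $E_\phi(f)\le C\,E_\psi(f)$ without the additive term. (Incidentally, your identification of $[0,e^{-3/2}]$ as the convexity interval of $\mathcal E$ is the correct one.)
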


\par

\begin{proof}
%It is no restriction to assume that
%$(\phi ,\psi )_{L^2}\neq 0$.
Let
$$
F_1 = |V_\phi f| ,
\quad
F_2 = |V_\psi f|
\quad \text{and}\quad
H=|V_\phi \psi |.
$$
We recall that $\nm H{L^1}\asymp \nm \phi {M^1}\nm \psi {M^1}<\infty$
in view of \cite[Proposition 12.1.2]{Gro1}. Since
\begin{equation}\label{Eq:HomogenMaps}
%\begin{gathered}
\mascS (\rr d)\ni \phi \mapsto E_\phi (f_0),
\quad
M^2(\rr d)\ni f \mapsto E_{\phi _0} (f)
%\\[1ex]
\quad \text{and}\quad
L^2(\rr d)\ni f\mapsto \nm f{L^2}^2
%\end{gathered}
\end{equation}
are positively homogeneous of order $2$
for every fixed $\phi _0\in \mascS (\rr d)\setminus 0$
and $f_0\in M^2(\rr d)\setminus 0$,
we reduce ourselves to the case when $\nm H{L^1}=1$.

\par

We recall that for some constant $C_1>0$ we have
$\nm {V_\psi f}{L^\infty}\le C_1\nm {V_\psi f}{L^2}$ for
every $f\in M^2(\rr d)$.
First assume that $\phi$, $\psi$ and $f$ are chosen
such that $\nm H{L^1}=1$ and
\begin{equation}\label{Eq:ExactNormWithF2}
\nm {F_2}{L^2}=\nm {V_\psi f}{L^2}=e^{-\frac 23}/C_1.
\end{equation}
Then $0\le F_2(x,\xi )\le \nm {V_\psi f}{L^\infty}\le e^{-\frac 23}$.
By \cite[Lemma 11.3.3]{Gro1} we obtain
\begin{equation}\label{Eq:F1F2Ests}
\begin{aligned}
0\le F_1(x,\xi )\le (F_2*H)(x,\xi )
&=
\iint _{\rr {2d}}F_2(x-y,\xi -\eta )\, d\mu (y,\eta)
\\[1ex]
&\le
\nm {F_2}{L^\infty}\nm H{L^1}
\le
e^{-\frac 23}.
\end{aligned}
\end{equation}
Here $\mu$ is the positive measure given by
$d\mu (y,\eta )=H(y,\eta )\, dyd\eta$, giving that
$$
\int _{\rr {2d}}d\mu =\nm H{L^1}=1.
$$

\par

Since $t\mapsto \fy (t)=-t^2\log t$ is increasing and convex
on $[0,e^{-\frac 23}]$, it follows from
\eqref{Eq:F1F2Ests} and Jensen's inequality that
\begin{align}
E_{0,\phi}(f)
&\equiv
-\iint _{\rr {2d}} F_1(x,\xi )^2\log F_1(x,\xi )\, dxd\xi
%\notag
%\\[1ex]
=
\iint _{\rr {2d}} \fy (F_1(x,\xi ))\, dxd\xi 
\notag
\\[1ex]
&\le
\iint _{\rr {2d}} \fy \left (
\iint _{\rr {2d}}F_2(x-y,\xi -\eta ) \, d\mu (y,\eta )\right )\, dxd\xi
\notag
\\[1ex]
&\le
\iint _{\rr {2d}} \left (
\iint _{\rr {2d}}\fy \left ( F_2(x-y,\xi -\eta )\right ) \, d\mu (y,\eta )
\right ) \, dxd\xi
\notag
\\[1ex]
&=
\nm H{L^1}
\iint _{\rr {2d}}\fy \left ( F_2(x,\xi)\right ) \, dxd\xi
=
E_{0,\psi}(f)
\label{Eq:SpecEntrFuncEst}
\end{align}

\par

Now choose $C_0\ge \max (e,e^{\frac 53}C_1)$ such that
$$
\nm {V_\phi f}{L^2}\le C_0\nm {V_\psi f}{L^2}
\quad \text{and}\quad
\nm {V_\psi f}{L^2}\le C_0\nm f{L^2},
$$
for every $f\in M^2(\rr d)$, which is possible because
$$
f\mapsto \nm {V_\phi f}{L^2}
\quad \text{and}\quad
f\mapsto \nm {V_\psi f}{L^2}
$$
are two equivalent norms for $M^2(\rr d)=L^2(\rr d)$.
Then $\log C_0\ge 1$.
A combination of \eqref{Eq:ExactNormWithF2}
and \eqref{Eq:SpecEntrFuncEst}
gives $\log (C_0\nm {F_2}{L^2})\ge 1$ and
\begin{align*}
E_\phi (f)
&= 
2E_{0,\phi}(f)+2\nm {F_1}{L^2}^2 \log \nm {F_1}{L^2}
\\[1ex]
&\le
2E_{0,\psi}(f)+2(C_0\nm {F_2}{L^2})^2 \log (C_0\nm {F_2}{L^2})
\\[1ex]
&=
2E_{0,\psi}(f)+2C_0^2\nm {F_2}{L^2}^2 \log \nm {F_2}{L^2}
+2(C_0^2\log C_0)\nm {F_2}{L^2}^2
\\[1ex]
&\le
C(E_{0,\psi}(f)+\nm {F_2}{L^2}^2) \log \nm {F_2}{L^2}
+\nm f{L^2}^2),
\end{align*}
when $C=2C_0^4\log C_0$. Hence
\eqref{Eq:DiscontEntrOnL2} follows under the additional
condition \eqref{Eq:ExactNormWithF2}. The estimate
\eqref{Eq:DiscontEntrOnL2} now follows for general 
$f\in M^2(\rr d)$ by the homogeneity of the mappings
in \eqref{Eq:HomogenMaps}, and the result follows.
\end{proof}

\par

\begin{proof}[Proof of Theorem \ref{Thm:EntropyCont}]
We choose $\Phi$ as in \eqref{Eq:EntropyFunc}$'$.
First we prove the continuity for
$E_\phi$ on $M^\Phi (\rr d)$ at origin.

\par

By \eqref{Eq:NarrowSpaces1} it follows that
$\nm f{M^2}\le C\nm f{M^\Phi}$,
for some constant $C\ge 1$ which is independent of
$f\in M^\Phi (\rr d)$. Hence, for $f\in M^\Phi (\rr d)$ with
$\nm f{M^\Phi}$ being small enough we obtain
\begin{align*}
\left |
\nm f{M^2}^2\log \nm f{M^2}
\right |
&\le
C^2\left |
\nm f{M^\Phi}^2\log (C\nm f{M^\Phi})
\right |
\\[1ex]
&\le
C^2\left (\left |
\nm f{M^\Phi}^2\log (\nm f{M^\Phi})
\right | +(\log C)\nm f{M^\Phi}^2\right )
\to 0
\end{align*}
as $\nm f{M^\Phi}\to 0$. A combination of the latter
continuity and \eqref{Eq:EntrIntLimit} now gives
\begin{equation*}
\begin{gathered}
E_\phi (f)
=
-\iint _{\rr {2d}}|V_\phi f(x,\xi )|^2\log |V_\phi f(x,\xi )^2|\, dxd\xi
+\nm f{L^2}^2\log \nm f{L^2}^2
\to 0
\\[1ex]
\text{as}\quad
\nm f{M^\Phi}\to 0,\ f\in M^\Phi (\rr d),
\end{gathered}
\end{equation*}
and the asserted continuity for $E_\phi$ near origin follows.

\par

%\medspace

Next we prove that $E_\phi$ is continuous at a
general $f\in M^\Phi (\rr d)$. Due to the first part it suffices to
prove that $E_\phi$ is continuous outside origin. Therefore
assume that $f\in M^\Phi (\rr d)\setminus 0$. By using
the homogeneity
$E_\phi (\lambda f) = |\lambda |^2E_\phi (f)$ when $f\in M^\Phi (\rr d)$
in combination with \eqref{Eq:NarrowSpaces1},
it follows that it suffices to prove the result under the additional condition
$$
\nm f{M^\Phi}+\nm f{M^2}+\nm f{M^\infty} < 1.
$$

\par

For conveniency we set
$G=V_\phi g$, $H=F+G$ and
$$
J(f,g)
\equiv
\left |
\iint _{\rr {2d}}
\left (
|H(x,\xi )|^2\log |H(x,\xi )|
-|F(x,\xi )|^2\log |F(x,\xi )|
\right )
\, dxd\xi
\right |
$$
when $g\in M^\Phi (\rr d)$.
We have
\begin{equation}\label{Eq:L2LogTerms}
|E_\phi (f+g) -E_\phi (f)|
\le
2
\left (J(f,g)
+
\left |
\nm {H}{L^2}^2\log \nm {H}{L^2}
- \nm {F}{L^2}^2\log \nm {F}{L^2}
\right | 
\right ).
\end{equation}

\par

If $\nm g{M^\Phi}\to 0$, then $\nm g{M^2}\to 0$,
which implies that $\nm H{L^2}\to \nm F{L^2}$
as $\nm g{M^2}\to 0$. Hence, by the continuity
of $t^2\log t$ on $[0,\infty )$, it follows that
last modulus in \eqref{Eq:L2LogTerms} tends to
zero as $\nm g{M^\Phi}\to 0$. This implies that
the asserted continuity follows if we prove
\begin{equation}\label{Eq:J(f,g)ToZero}
J(f,g)\to 0
\quad \text{as}\quad
\nm g{M^\Phi}\to 0.
\end{equation}

\par

Let $R>1$ be fixed and let
$$
\Omega = \sets {(x,\xi )\in \rr {2d}}{|F(x,\xi )|>R|G(x,\xi )|}.
$$
Then
\begin{equation}\label{Eq:DecompJ(f,g)}
0\le J(f,g) \le \sum _{k=1}^3 J_k(f,g),
\end{equation}
where
\begin{align*}
J_1(f,g)
&=
\left |
\iint _{\Omega}(|H(x,\xi )|^2-|F(x,\xi )|^2)\log |F(x,\xi )|\, dxd\xi 
\right |,
\\[1ex]
J_2(f,g)
&=
\left |
\iint _{\Omega}|H(x,\xi )|^2\log \left |\frac {H(x,\xi )}{F(x,\xi )}
\right | \, dxd\xi 
\right |,
\intertext{and}
J_3(f,g)
&=
\left |
\iint _{\complement \Omega}
\left (
|H(x,\xi )|^2\log |H(x,\xi )|
-|F(x,\xi )|^2\log |F(x,\xi )|
\right )
\, dxd\xi
\right | .
\end{align*}
We shall estimate $J_k(f,g)$ in suitable ways, $k=1,2,3$.

\par

For the integrand in $J_1(f,g)$, taken into account that
$$
R|G(x,\xi )|<|F(x,\xi )|<1,
$$
we have
\begin{align*}
0&\le \left |
(|H(x,\xi )|^2-|F(x,\xi )|^2)\log |F(x,\xi )|
\right |
\\[1ex]
&=
- \big |
|F(x,\xi )+G(x,\xi )|^2-|F(x,\xi )|^2
\big | \log |F(x,\xi )|
\\[1ex]
&\le
-(2|F(x,\xi )|\, |G(x,\xi )| +|G(x,\xi )|^2)\log |F(x,\xi )|
\\[1ex]
&\le
-\left (\frac 2R+\frac 1{R^2}\right )|F(x,\xi )|^2\log |F(x,\xi )|,
\end{align*}
which gives
\begin{equation}\label{Eq:J1(f,g)Est}
J_1(f,g) \le -\left (\frac 2R+\frac 1{R^2}\right )
\iint _{\rr {2d}}|F(x,\xi )|^2\log |F(x,\xi )|\, dxd\xi .
\end{equation}

\par

For the logarithm in $J_2(f,g)$ we have
\begin{align*}
\left |
\log
\left |
\frac {H(x,\xi )}{F(x,\xi )}
\right |
\right |
&=
\left |
\log
\left |
1+\frac {G(x,\xi )}{F(x,\xi )}
\right |
\right |
\le
-\log
\left (
1-\frac {|G(x,\xi )|}{|F(x,\xi )|}
\right )
\\[1ex]
&\le
-\log
\left (
1-\frac 1R
\right )
=
\sum _{j=1}^\infty \frac {R^{-j}}j
\le
\sum _{j=1}^\infty R^{-j}
=
\frac 1{R-1}.
\end{align*}
In the second inequality we have used the fact that
$R>1$ and that $|F(x,\xi )|>R|G(x,\xi )|$ when $(x,\xi )\in \Omega$.

\par

This gives
\begin{align*}
J_2(f,g)
&\le
\frac 1{R-1}
\iint _{\Omega} |H(x,\xi )|^2\, dxd\xi 
\\[1ex]
&\le
\frac 2{R-1}
\iint _{\Omega} (|F(x,\xi )|^2+|G(x,\xi )|^2)\, dxd\xi 
\\[1ex]
&<
\frac 2{R-1}\iint _{\Omega} (|F(x,\xi )|^2+\frac 1{R^2}|F(x,\xi )|^2)\, dxd\xi ,
\end{align*}
which in turn gives
\begin{equation}\label{Eq:J2(f,g)Est}
J_2(f,g)
<
\frac 2{R-1}\left ( 1+\frac 1{R^2}\right )\nm F{L^2}^2
\end{equation}

\par

Next we estimate $J_3(f,g)$.
By \eqref{Eq:NarrowSpaces1} there is a
$\delta _0>0$ such that
\begin{equation}\label{Eq:GSpecEst}
|G(x,\xi )|\le \frac {e^{-\frac 12}}{(R+1)},
\quad \text{when}\quad
\nm g{M^\Phi}<\delta _0.
\end{equation}
%%
%when $g\in M^\Phi (\rr d)$ satisfies $$ such that
%when searching for the limit \eqref{Eq:J(f,g)ToZero},
%and in what follows we assume that this bound holds.

\par

Since $|t^2\log t|=-t^2\log t$ is increasing on $[0,e^{-\frac 12}]$,
\begin{align*}
|H(x,\xi )|&\le |F(x,\xi )|+|G(x,\xi )|\le (R+1)|G(x,\xi )|\le e^{-\frac 12} 
\intertext{and}
|F(x,\xi )|&\le R|G(x,\xi )|\le e^{-\frac 12}
\end{align*}
when $(x,\xi )\in \complement \Omega$
by \eqref{Eq:GSpecEst}, we obtain
\begin{align*}
J_3(f,g)
&\le
\left |
\iint _{\complement \Omega}
|H(x,\xi )|^2\log |H(x,\xi )|\, dxd\xi 
\right |
+
\left |
\iint _{\complement \Omega}
|F(x,\xi )|^2\log |F(x,\xi )|
\, dxd\xi
\right | 
\\[1ex]
&\le
\left |
\iint _{\complement \Omega}
|(R+1)G(x,\xi )|^2\log |(R+1)G(x,\xi )|\, dxd\xi 
\right |
\\
&+
\left |
\iint _{\complement \Omega}
|RG(x,\xi )|^2\log |RG(x,\xi )|
\, dxd\xi
\right | 
\\[1ex]
&\le
\left (
(R+1)^2+R^2
\right )
\left |
\iint _{\complement \Omega}
|G(x,\xi )|^2\log |G(x,\xi )|\, dxd\xi 
\right |
\\
&+
\left (
(R+1)^2\log (R+1)+R^2\log R
\right )
\iint _{\complement \Omega}
|G(x,\xi )|^2\, dxd\xi 
\end{align*}
when $\nm g{M^\Phi}<\delta _0$.
A combination of these estimates and the fact that
$\log |G(x,\xi )|<0$ in view of \eqref{Eq:GSpecEst}
gives
\begin{equation}\label{Eq:J3(f,g)Est}
\begin{aligned}
J_3(f,g)
&\le 
-\left (
(R+1)^2+R^2
\right )
\iint _{\rr {2d}}
|G(x,\xi )|^2\log |G(x,\xi )|\, dxd\xi 
\\[1ex]
&+
\left (
(R+1)^2\log (R+1)+R^2\log R
\right ) \nm G{L^2}^2,
\quad \nm g{M^\Phi}<\delta _0.
\end{aligned}
\end{equation}

\par

Now let $\ep >0$ be arbitrary. By Lemma
\ref{Lem:EntropyCont}, 
\eqref{Eq:NarrowSpaces1},
\eqref{Eq:J1(f,g)Est}
and \eqref{Eq:J2(f,g)Est} it follows that
$J_1(f,g)<\frac \ep 3$ and $J_2(f,g)<\frac \ep 3$,
provided $R$ is chosen large enough.
A combination of Lemma \ref{Lem:PartEntropyCont},
\eqref{Eq:NarrowSpaces1} and
\eqref{Eq:J3(f,g)Est} shows that
there is a positive number $\delta <\delta _0$
such that $J_3(f,g)<\frac \ep 3$ when
$\nm g{M^\Phi}<\delta$.

\par

By combining these estimates with
\eqref{Eq:DecompJ(f,g)} now gives
$$
0\le J(f,g)<\ep
\quad \text{when}\quad
g\in M^\Phi (\rr d),\ \nm g{M^\Phi}<\delta .
$$
This shows that \eqref{Eq:J(f,g)ToZero} holds true, and the
continuity for $E_\phi$ on $M^\Phi (\rr d)$ follows.

\par

The continuity for $E_\phi$ on $M^p(\rr d)$, $1\le p<2$
now follows from the fact that 
$M^p (\rr d)$ is continuously embedded in
$M^\Phi (\rr d)$.

\par

It remains to prove the discontinuity for $E_\phi$ on $M^2(\rr d)$,
and then it follows from Lemma \ref{Lem:DiscontEntrOnL2}
that we may assume that $\phi (x)=\pi ^{-\frac d4}e^{-\frac 12|x|^2}$.
We shall investigate $E_\phi (f)$ with
$$
f(x)=f_\lambda (x)=\pi ^{-\frac d4}\lambda ^{\frac d4}e^{-\frac \lambda 2|x|^2},
\qquad \lambda >1.
$$
Then $\nm {\phi}{L^2}=\nm {f_\lambda}{L^2}=1$, and by straight-forward
computations it follows that
$$
V_\phi f_\lambda (x,\xi )
=
\left (\frac {\lambda ^{\frac 12}} {\pi (\lambda +1)}\right )^{\frac d2}
e^{-\frac i{\lambda +1}\scal x\xi}
e^{-\frac 1{2(\lambda +1)}(\lambda |x|^2+|\xi |^2)},
$$
and since $f_\lambda$ is $L^2$-normalized we get
\begin{align*}
E_\phi (f_\lambda ) &= -\iint _{\rr {2d}}|V_\phi f_\lambda (x,\xi )|^2
\log |V_\phi f_\lambda (x,\xi )|^2\, dxd\xi
\\[1ex]
&=
\left (\frac {\lambda ^{\frac 12}} {\pi (\lambda +1)}\right )^{d}
\iint _{\rr {2d}} h_\lambda \Big ( \frac 1{\lambda +1}
(\lambda |x|^2+ |\xi |^2 ) \Big )\, dxd\xi ,
\end{align*}
where
$$
h_\lambda (t) = e^{-t}\left ( \frac t2+d
\log \left ( \pi (\lambda ^{\frac 12}+\lambda ^{-\frac 12})\right )
\right )
$$
By taking $(\frac \lambda {\lambda +1})^{\frac 12}x$ and
$(\frac 1{\lambda +1})^{\frac 12}\xi $ as new variables of
integrations we obtain
\begin{align*}
E_\phi (f_\lambda )
&=
\pi ^{-d}\iint _{\rr {2d}} h_\lambda ( |x|^2+ |\xi |^2)\, dxd\xi
\\[1ex]
&=
\pi ^{-d}
\iint _{\rr {2d}} e^{-(|x|^2+|\xi |^2)}
\left (
\frac 1{2}(|x|^2+|\xi |^2)
+
d\log \left ( \pi (\lambda ^{\frac 12}+\lambda ^{-\frac 12})\right )\right )
\, dxd\xi
\\[1ex]
&=
d\left (
\frac 14
+
\log \left ( \pi (\lambda ^{\frac 12}+\lambda ^{-\frac 12})\right )
\right ) .
\end{align*}

\par

This implies
$$
\lim _{\lambda \to 0+}E_\phi (f_\lambda )
=
\lim _{\lambda \to \infty}E_\phi (f_\lambda ) = \infty
\quad \text{but}\quad
\nm {f_\lambda}{L^2}=1,
$$
which shows that $E_\phi$ is discontinuous on $L^2(\rr d)=M^2(\rr d)$,
and the result follows.
\end{proof}

\par

By Theorem \ref{Thm:EntropyCont} and its proof
it follows that Lemma \ref{Lem:PartEntropyCont}
can be improved into the following.

\par

\renewcommand{\rubrik}{Lemma \ref{Lem:PartEntropyCont}$'$\!}

\par

\begin{tom}
Let $\Phi$ and $\phi$ be the same as in
Lemma \ref{Lem:EntropyCont}. Then
$$
M^\Phi (\rr d)\ni f\mapsto \iint _{\rr {2d}}
|V_\phi f(x,\xi )|^2 \big | \log |V_\phi f(x,\xi )| \big |\, dxd\xi
$$
is locally uniformly continuous.
\end{tom}

\par

\begin{rem}
In view of Theorem \ref{Thm:EntropyCont} and its
proof it follows that (2) in that theorem can be extended
into the following:
\begin{enumerate}
\item[(2)$'$] \emph{$E_\phi$ in \eqref{Eq:EntropyFunc}
is locally uniformly continuous on $M^p(\rr d)$ and on $M^\Phi (\rr d)$,
$1\le p<2$,
and discontinuous on $M^2(\rr d)$.}
\end{enumerate}
\end{rem}

\par

\begin{rem}\label{Rem:DiscontPseudo}
Let $A\in \GL (d,\mathbf R)$ and $\Phi$ be a Young
function which fullfils \eqref{Eq:SpecYoungFunc}.
We claim that there is
a symbol $a$ in $M^2(\rr {2d})$ (which is close
to $M^{p,p'}(\rr {2d})$ when $p>2$ is close to $2$)
such that  the map \eqref{Eq:ExContPseudo} is
discontinuous. (Cf. Example \ref{Example:ContPseudo}.)

\par

In fact, by Remark \ref{Rem:NonEqualOrlModSp},
there are $f_1\in M^2(\rr d)\setminus M^\Phi (\rr d)$
and $f_2\in \mascS (\rr d)\setminus 0$. Then
$a=W_{f_1,f_2}^A \in M^2(\rr {2d})$.
By \eqref{Eq:PseudoWignerLink2} it follows that
$$
\op _A(a)f(x) = (2\pi )^{-\frac d2}(f,f_2)_{L^2}f_1(x)
\in M^2(\rr d)\setminus M^\Phi (\rr d)
$$
for every $f\in \mascS (\rr d)\subseteq M^\Phi (\rr d)$ which
is not orthogonal to $f_2$, and the asserted discontinuity
follows. 
\end{rem}

\par

\appendix

% \section{Duality properties for
% Orlicz modulation spaces and
% STFT Projections}\label{app:A}

\section{STFT Projections
on Orlicz modulation spaces}\label{app:A}

\par

In this appendix we first recall some facts on projections on
Orlicz modulation spaces which appear after compositions
between short-time Fourier transforms and their adjoints.

\par

Thereafter we apply our results to give a proof 
of Proposition \ref{Prop:BasicPropOrlModSp2}.

\par

\subsection{STFT Projections and
twisted convolutions}

\par

Let $s\ge \frac 12$.
If $\phi \in \maclS _s(\rr d)\setminus 0$,
then it follows from Fourier's inversion
formula that
\begin{equation}\label{Eq:IdentSTFTAdj}
\operatorname{Id}
=
\operatorname{Id}_{\maclS _s'}
=
\left (\nm \phi{L^2}^{-2}\right ) \cdot V_\phi ^*\circ V_\phi ,
\end{equation}
is the identity operator on
$\maclS _s'(\rr d)$. The same and
following results hold
true with $\Sigma _s$ and $\mascS$
in place of $\maclS _s$ at each occurrence.
The identity \eqref{Eq:IdentSTFTAdj}
is equivalent to Moyal's identity
\eqref{Eq:Moyal}. If we swap the
order of this composition we get certain types
of projections. More precisely, let
\begin{equation}\label{Eq:ProjphiDef}
P_\phi \equiv \nm \phi {L^2}^{-2}
\cdot V_\phi \circ V_\phi ^*.
\end{equation}
We observe that $P_\phi$ is continuous on
$\maclS_s (\rr {2d})$,
$L^2(\rr {2d})$ and on $\maclS _s'(\rr {2d})$
due to the mapping properties
for $V_\phi$ and $V_\phi ^*$.

\par

It is clear that $P_\phi ^*=P_\phi$, i.{\,}e. 
$P_\phi$ is self-adjoint. Furthermore,
$$
P_\phi ^2 = \nm \phi {L^2}^{-2}\cdot
V_\phi \circ
\Big (
\underset{\text{The identity operator}}
{\underbrace{\nm \phi {L^2}^{-2}\cdot V_\phi ^*\circ V_\phi}}
\Big )
\circ V_\phi ^*
=
\nm \phi {L^2}^{-2}\cdot V_\phi \circ V_\phi ^* =P_\phi ,
$$
giving that $P_\phi$ is an orthonormal projection,
that is,
\begin{equation}\label{Eq:ProjphiRule}
P_\phi ^* = P_\phi
\quad \text{and}\quad P_\phi ^2=P_\phi .
\end{equation}

\par

The ranks of $P_\phi$ are given by
\begin{equation}\label{Eq:STFTProjMaps}
\begin{aligned}
P_\phi (\maclS _s (\rr {2d}))
=
V_\phi (\maclS _s (\rr d))
\quad \text{and}\quad
P_\phi (\maclS _s '(\rr {2d}))
=
V_\phi (\maclS _s'(\rr d)).
\end{aligned}
\end{equation}
In fact, if $F\in \maclS _s '(\rr {2d})$, then
\begin{equation}\label{Eq:ProjSTFTIdent}
P_\phi F = V_\phi f,
\end{equation}
where $f=\nm {\phi}{L^2}^{-2}V_\phi ^*F\in
\maclS _s '(\rr d)$. This shows that
$P_\phi (\maclS _s '(\rr {2d}))
\subseteq V_\phi (\maclS _s'(\rr d))$.
On the other hand, if
$f\in \maclS _s'(\rr d)$ and $F=V_\phi f$, then
$$
P_\phi F =   \Big (V_\phi \circ \Big ( \nm \phi {L^2}^{-2} \cdot
V_\phi ^* \circ V_\phi \Big ) \Big )f = V_\phi f,
$$
which shows that any element in
$V_\phi (\maclS _s '(\rr d))$
equals to an element in
$P_\phi (\maclS _s'(\rr {2d}))$,
i.{\,}e. $P_\phi (\mascS '(\rr {2d})) = V_\phi (\mascS '(\rr d))$.
The same holds true
with $\maclS _s$ in place of $\maclS _s'$
at each occurrence, and
\eqref{Eq:STFTProjMaps} follows.

\par

\begin{rem}\label{Rem:STFTProj}
Let $F\in \maclS _s'(\rr {2d})$.
Then \eqref{Eq:STFTProjMaps} shows
that $F=V_{\phi}f$ for some
$f\in \maclS _s'(\rr {d})$, if and only if
\begin{equation}\label{Eq:TwistedProj}
F= P_\phi F.
\end{equation}
Furthermore, if \eqref{Eq:TwistedProj} holds, then $F=V_{\phi}f$ with
\begin{equation}\label{Eq:TwistedProj2}
f=(\nm \phi{L^2}^{-2})\cdot V_\phi ^*F .
\end{equation}
%%
%
%\par
%
%In fact, suppose that $f\in \mascS'(\rr d)$ and let $F=V_\phi f$. Then
%\eqref{Eq:TwistedProj} follows from \eqref{Eq:STFTWindTrans}.
%
%\par
%
%On the other hand, suppose that \eqref{Eq:TwistedProj} holds and let $f$
%be given by \eqref{Eq:TwistedProj2}. Then
%$$
%V_\phi f = P_\phi F= F,
%$$
%and the asserted equivalence follows.
%This gives another way to describe the
%identities in \eqref{Eq:STFTProjMaps}.
\end{rem}

\medspace

Let $F\in \maclS _s (\rr {2d})$
and $\phi \in \maclS _s (\rr d)\setminus 0$.
Then by expanding the integrals
for $V_\phi$ and $V_\phi ^*$ in
\eqref{Eq:ProjphiDef} one obtains
\begin{equation}\label{Eq:ProjOpTwistedConv}
P_\phi F = \nm \phi {L^2}^{-2}
\cdot
V_\phi \phi *_V F ,
\qquad F\in \mascS '(\rr {2d}),
\end{equation}
where the
\emph{twisted convolution} $*_V$ is defined by
\begin{align}
(F*_VG)(x,\xi )
&=
(2\pi )^{-\frac d2}
\iint _{\rr {2d}}
F(x-y,\xi -\eta )G(y,\eta )e^{-i\scal{y}{\xi -\eta}}\, dyd\eta ,
\label{Eq:TwistConvDef}
\end{align}
when $F,G\in \maclS _s (\rr {2d})$.
We observe that the definition of $*_V$
extends in different ways.
For example, Young's inequality for ordinary 
convolution also holds for $*_V$.
Moreover, the map $(F,G)\mapsto F*_VG$
extends uniquely to continuous mappings from
$\maclS _s(\rr {2d}) \times
\maclS _s '(\rr {2d})$ or
$\maclS _s '(\rr {2d})
\times \maclS _s (\rr {2d})$
to $\maclS _s '(\rr {2d})$. By straight-forward 
computations it follows that
\begin{equation}\label{Eq:TwistedConvAsoc}
(F*_VG)*_VH = F*_V(G*_VH),
\end{equation}
when $F,H\in \maclS _s (\rr {2d})$ and
$G\in \maclS _s '(\rr {2d})$,
or $F,H\in \maclS _s '(\rr {2d})$
and $G\in \maclS _s(\rr {2d})$

\par

Let $f\in \maclS _s'(\rr d)$ and
$\phi _j\in \maclS (\rr d)$, $j=1,2,3$.
By straight-forward applications of
Parseval's formula it follows that
\begin{equation}\label{Eq:STFTWindTrans}
\big (
(V_{\phi _2}\phi _3) *_V(V_{\phi _1}f)
\big ) (x,\xi )
=
(\phi _3,\phi _1)_{L^2} \cdot (V_{\phi _2}f)(x,\xi ),
\end{equation}
which is some sort of reproducing kernel
of short-time Fourier transforms in the background of
$*_V$. (See also Chapter 11 in \cite{Gro1}.)

\par

\subsection{Applications to Orlicz
modulation spaces}

\par

We have now the following which essentially follows
from Proposition 4.3 and its proof in
\cite{FeiGro1}.

\par

\begin{lemma}\label{Lemma:ContProjOrlSp}
Let $\Phi$ and $\Psi$ be Young functions,
$\phi \in \Sigma _1(\rr d)$ be such that
$\nm \phi{L^2}=1$ and let $\omega \in
\mascP _E(\rr {2d})$. Then
the following is true:
\begin{enumerate}
\item
$P_\phi$ from $\Sigma _1'(\rr {2d})$
to $V_\phi (\Sigma _1'(\rr {d}))$ restricts
to a continuous projection from
$L^{\Phi ,\Psi }_{(\omega )}(\rr {2d})$
to $V_\phi
(M^{\Phi ,\Psi }_{(\omega )}(\rr {d}))$;

\vrum

\item if $F\in
L^{\Phi ,\Psi }_{(\omega )}(\rr {2d})$
and $f=V_\phi ^*F$, then $V_\phi f=P_\phi F$
and
\begin{equation}\label{Eq:ProjOrlModCont}
\nm f{M^{\Phi ,\Psi }_{(\omega )}}
\asymp
\nm {P_\phi F}{L^{\Phi ,\Psi }_{(\omega )}}
\lesssim
\nm F{L^{\Phi ,\Psi }_{(\omega )}},
\qquad
f=V_\phi ^*F .
\end{equation}
\end{enumerate}
\end{lemma}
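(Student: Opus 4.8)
The plan is to verify that $P_\phi$ acts boundedly on the mixed Orlicz space $L^{\Phi,\Psi}_{(\omega)}(\rr{2d})$ by reducing $P_\phi F$ to a twisted convolution and then controlling that convolution by an ordinary convolution with an $L^1$-kernel, using the weight estimates from the submultiplicativity of $v$ and the moderateness of $\omega$. First I would recall from \eqref{Eq:ProjOpTwistedConv} that for $F \in \Sigma_1'(\rr{2d})$ one has $P_\phi F = \nm\phi{L^2}^{-2}\cdot (V_\phi\phi) *_V F = (V_\phi\phi)*_V F$ since $\nm\phi{L^2}=1$. Because $\phi \in \Sigma_1(\rr d)$, Proposition \ref{stftGelfand2} gives a Gaussian-type decay $|V_\phi\phi(x,\xi)| \lesssim e^{-r(|x|+|\xi|)}$ for every $r>0$; in particular $\Phi_0(x,\xi) := |V_\phi\phi(x,\xi)|$ can be dominated by a kernel $e^{-r_0(|x|+|\xi|)}$ with $r_0$ large enough that $e^{-r_0(|\cdot|)} v(\cdot) \in L^1(\rr{2d})$, where $v$ is the submultiplicative weight controlling $\omega$ (using \eqref{Eq:BoundWeights}). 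The pointwise bound $|(G *_V F)(x,\xi)| \le (2\pi)^{-d/2}(|G|*|F|)(x,\xi)$ then reduces everything to ordinary convolution.

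The core estimate would then run as follows. Writing $F_\omega(x,\xi) = |F(x,\xi)|\,\omega(x,\xi)$, moderateness of $\omega$ gives $\omega(x,\xi) \lesssim \omega(x-y,\xi-\eta)\,v(y,\eta)$, so
\begin{equation*}
|(P_\phi F)(x,\xi)|\,\omega(x,\xi)
\lesssim
\iint_{\rr{2d}} \Phi_0(y,\eta)\, v(y,\eta)\, F_\omega(x-y,\xi-\eta)\, dy\, d\eta
=
(K * F_\omega)(x,\xi),
\end{equation*}
where $K(y,\eta) = \Phi_0(y,\eta)\,v(y,\eta) \in L^1(\rr{2d})$. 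Now I invoke the convolution estimate for Orlicz spaces: since $L^1$ convolves $L^\Phi$ into $L^\Phi$ (this is the $\Phi_1 = \Phi_{[1]}$, $\Phi_0 = \Phi_2 = \Phi$ case of Lemma \ref{Lemma:YoungIneq}, using $\Phi^{-1}(s)\cdot \Phi_{[1]}^{-1}(s) = s\,\Phi^{-1}(s)$), and by applying this successively in the first variable (with the $L^\Phi$ norm) and then in the second variable (with the $L^\Psi$ norm) — Young's inequality for mixed-norm spaces follows by iterating — one obtains $\nm{K*F_\omega}{L^{\Phi,\Psi}} \lesssim \nm K{L^1}\,\nm{F_\omega}{L^{\Phi,\Psi}} = \nm K{L^1}\,\nm F{L^{\Phi,\Psi}_{(\omega)}}$. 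This proves the second inequality in \eqref{Eq:ProjOrlModCont} and, in particular, that $P_\phi$ maps $L^{\Phi,\Psi}_{(\omega)}$ into itself boundedly.

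It remains to identify the range and the two remaining equivalences. For (2): given $F \in L^{\Phi,\Psi}_{(\omega)} \subseteq \Sigma_1'(\rr{2d})$, set $f = V_\phi^* F$; then by \eqref{Eq:ProjSTFTIdent} (Remark \ref{Rem:STFTProj}) one has $V_\phi f = P_\phi F \in L^{\Phi,\Psi}_{(\omega)}$, which by the very Definition \ref{Def:Orliczmod} of the Orlicz modulation space norm means $f \in M^{\Phi,\Psi}_{(\omega)}(\rr d)$ with $\nm f{M^{\Phi,\Psi}_{(\omega)}} = \nm{P_\phi F}{L^{\Phi,\Psi}_{(\omega)}}$ — here I use Proposition \ref{Prop:BasicPropOrlModSp1}(2) to know that the window $\phi \in \Sigma_1(\rr d)\setminus 0$ yields an equivalent norm, giving the first ``$\asymp$'' in \eqref{Eq:ProjOrlModCont}. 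Combined with the bound from the previous paragraph this gives all of \eqref{Eq:ProjOrlModCont}. For (1): $P_\phi$ is a projection on $\Sigma_1'(\rr{2d})$ by \eqref{Eq:ProjphiRule}, its range there is $V_\phi(\Sigma_1'(\rr d))$ by \eqref{Eq:STFTProjMaps}, and the computation just made shows $P_\phi\big(L^{\Phi,\Psi}_{(\omega)}(\rr{2d})\big) = V_\phi\big(M^{\Phi,\Psi}_{(\omega)}(\rr d)\big)$ with $P_\phi$ restricting to a bounded operator there; idempotency is inherited from $\Sigma_1'$. The main obstacle I anticipate is the bookkeeping for the mixed-norm Young inequality — one must justify that applying Lemma \ref{Lemma:YoungIneq} separately in each variable (after a Minkowski-type interchange, exactly as in the proof of Theorem \ref{Thm:CordOkdjP3.3}) legitimately yields the product-kernel bound for $L^{\Phi,\Psi}$; everything else is a routine transcription of the Hilbert-space STFT projection identities into the Orlicz setting.
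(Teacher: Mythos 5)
Your proposal follows essentially the same route as the paper: write $P_\phi F=(V_\phi \phi )*_VF$, dominate it pointwise by the ordinary convolution $|V_\phi \phi |*|F|$, absorb the weight via $\omega (x,\xi )\lesssim \omega (x-y,\xi -\eta )v(y,\eta )$, and play the superexponential decay of $V_\phi \phi$ (Proposition \ref{stftGelfand2}) against the at most exponential growth of $v$ from \eqref{Eq:BoundWeights} to get $\nm {P_\phi F}{L^{\Phi ,\Psi}_{(\omega )}}\lesssim \nm F{L^{\Phi ,\Psi}_{(\omega )}}\nm {V_\phi \phi}{L^1_{(v)}}$, after which (1) and (2) follow from \eqref{Eq:ProjSTFTIdent}, Remark \ref{Rem:STFTProj} and the window-independence of the $M^{\Phi ,\Psi}_{(\omega )}$ norm. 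The only cosmetic difference is that you route the $L^1$-convolution bound through Lemma \ref{Lemma:YoungIneq}, whose stated $\Delta _2$ hypothesis is not assumed in the present lemma; that special case of Young's inequality on (mixed, weighted) Orlicz spaces holds for arbitrary Young functions, and is what the paper uses implicitly.
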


\par

\begin{proof}
By \eqref{Eq:ProjSTFTIdent}
and Remark \ref{Rem:STFTProj}, the
result follows if we prove
\eqref{Eq:ProjOrlModCont}.

\par

Let $v\in \mascP _E(\rr {2d})$ be submultiplicative
such that $\omega$ is $v$-moderate. By 
\eqref{Eq:ProjOpTwistedConv} we have
$$
|F*_VG| \le |F|*|G|.
$$
Hence \eqref{Eq:ProjSTFTIdent},
and \eqref{Eq:TwistConvDef} give
\begin{align*}
\nm f{M^{\Phi ,\Psi }_{(\omega )}}
&\asymp
\nm {V_\phi f}{L^{\Phi ,\Psi }_{(\omega )}}
=
\nm {P_\phi F}{L^{\Phi ,\Psi }_{(\omega )}}
\\[1ex]
&\le
\NM {|F|*|V_\phi \phi|}{L^{\Phi ,\Psi }_{(\omega )}}
\lesssim
\nm F{L^{\Phi ,\Psi }_{(\omega )}}
\nm {V_\phi \phi}{L^1_{(v)}}.
\end{align*}
The asserted continuity now follows from
the fact that for some $r>0$ we have
$$
v(x,\xi )\lesssim e^{r(|x|+|\xi |)}
\quad \text{and}\quad
|V_\phi \phi (x,\xi )| \lesssim 
e^{-2r(|x|+|\xi |)},
$$
in view of Proposition \ref{stftGelfand2}
and \eqref{Eq:BoundWeights}.
\end{proof}

\par

\begin{proof}[Proof of Proposition
\ref{Prop:BasicPropOrlModSp2}]
We have
$$
|(F,G)_{L^2(\rr {2d})}|
\lesssim
\nm F{L^{\Phi ,\Psi}_{(\omega )}}
\nm G{L^{\Phi ^*,\Psi ^*}_{(\omega )}}
$$
when $F,G\in \Sigma _1(\rr {2d})$, by H{\"o}lder's
inequality for Orlicz spaces (cf. e.{\,}g. \cite{HaH,RaoRen1}).
By Hahn-Banach's theorem it follows that the map
$(F,G)\to (F,G)_{L^2(\rr {2d})}$ from $\Sigma _1(\rr {2d})
\times \Sigma _1(\rr {2d})$ to $\mathbf C$ extends to
a continuous map from $L^{\Phi ,\Psi}_{(\omega )}(\rr {2d})
\times L^{\Phi ^*,\Psi ^*}_{(\omega )}(\rr {2d})$ to
$\mathbf C$.

\par

If $\phi \in \Sigma _1(\rr d)\setminus 0$
satisfies $\nm \phi {L^2}=1$,
$f\in M^{\Phi ,\Psi}_{(\omega )}(\rr {d})$ and
$g\in M^{\Phi ^*,\Psi ^*}_{(\omega )}(\rr {d})$,
we now use Moyal's identity to define
$(f,g)_{L^2(\rr d)} = (V_\phi f,V_\phi g)_{L^2(\rr {2d})}$,
which satisfies the requested properties, because
\begin{equation}\label{Eq:AppDualityEst}
\begin{aligned}
|(f,g)_{L^2(\rr d)}| &= |(V_\phi f,V_\phi g)_{L^2(\rr {2d})}|
\\[1ex]
&\lesssim
\nm {V_\phi f}{L^{\Phi ,\Psi}_{(\omega )}}
\nm {V_\phi g}{L^{\Phi ^*,\Psi ^*}_{(\omega )}}
\asymp
\nm {f}{M^{\Phi ,\Psi}_{(\omega )}}
\nm {g}{M^{\Phi ^*,\Psi ^*}_{(\omega )}},
\end{aligned}
\end{equation}
and the continuity extension in (1) follows. Suppose from now
on that $\Phi$ and $\Psi$ in addition
satisfy the $\Delta _2$-condition. Then
$\Sigma _1(\rr d)$ is dense in $M^{\Phi ,\Psi}_{(\omega )}(\rr d)$
which implies that the latter continuity extension is unique.

\par

%The equivalence between $\nmm \cdo $
%and
%$\nm \cdo{M^{\Phi ,\Psi}_{(\omega )}}$ is a straight-forward
%consequence of the reversed H{\"o}lder's inequality for
%Orlicz spaces.
%
%\par

Next suppose that $T$ is a continuous linear form on
$M^{\Phi ,\Psi}_{(\omega )}(\rr d)$. Then
$$
T_1(V_\phi f) \equiv T(f)
$$
satisfies
$$
|T_1(V_\phi f)| \lesssim |T(f)|
\lesssim
\nm f{M^{\Phi ,\Psi}_{(\omega )}}
\asymp
\nm {V_\phi f}{L^{\Phi ,\Psi}_{(\omega )}}.
$$
Hence $T_1$ is a continuous linear form on
$V_\phi (M^{\Phi ,\Psi}_{(\omega)}(\rr d))$.
Since the injection from
$V_\phi (M^{\Phi ,\Psi}_{(\omega)}(\rr d))$
%is continuously embedded in 
to $L^{\Phi ,\Psi}_{(\omega)}(\rr {2d}))$
%with the  in view of
%Lemma \ref{Lemma:ContProjOrlSp},
is norm preserving,
it follows by Hahn-Banach's theorem that
$T_1$ extends to a linear form on
$L^{\Phi ,\Psi}_{(\omega)}(\rr {2d}))$ with the same
norm.
By \cite{RaoRen1}
it follows that the dual of the latter space is equal to
$L^{\Phi ^*,\Psi ^*}_{(1/\omega)}(\rr {2d})$
through the $(\cdo ,\cdo )_{L^2(\rr {2d})}$ form. Hence
$$
T_1(F) = (F,G)_{L^2(\rr {2d})}
=
\iint _{\rr {2d}} F(x,\xi )\overline {G(x,\xi )}\, dxd\xi ,
\qquad F\in L^{\Phi ,\Psi}_{(\omega)}(\rr {2d}),
$$
for some fixed $G\in L^{\Phi ^*,\Psi ^*}_{(1/\omega)}(\rr {2d}))$
which satisfies
\begin{equation}\label{Eq:DualElemEsts}
\nm G{L^{\Phi ^*,\Psi ^*}_{(1/\omega)}}\asymp \nmm {T_1}=\nmm T.
\end{equation}

\par

By Lemma \ref{Lemma:ContProjOrlSp} we also have
$P_\phi G=V_\phi g$
for some $g\in M^{\Phi ,\Psi}_{(\omega)}(\rr {d})$.
A combination of these identities and Moyal's identity
gives that for any $f\in M^{\Phi ,\Psi}_{(\omega)}(\rr {d})$
we have
\begin{equation}\label{Eq:ReformTOp}
\begin{aligned}
T(f) = (V_\phi f,G)_{L^2(\rr {2d})}
&=
(P_\Phi (V_\phi f),G)_{L^2(\rr {2d})}
\\[1ex]
&=
(V_\phi f,P_\phi G)_{L^2(\rr {2d})}
=
 (V_\phi f,V_\phi g)_{L^2(\rr {2d})}
 =
 (f,g)_{L^2((\rr d))},
\end{aligned}
\end{equation}
which gives (2).

\par

Finally, by
\eqref{Eq:AppDualityEst} it follows that
$\nmm f \lesssim \nm f {M^{\Phi ,\Psi}_{(\omega )}}$ when
$f\in M^{\Phi ,\Psi}_{(\omega )}(\rr d)$.

\par

On the other hand, let $f_0\in M^{\Phi ,\Psi}_{(\omega )}(\rr d)$
be fixed and let $T$ be the linear form on
$\sets {\lambda f_0}{\lambda \in \mathbf C}
\subseteq
M^{\Phi ,\Psi}_{(\omega )}(\rr d)$ given by
$$
T(\lambda f_0)=\lambda \nm {f_0}{M^{\Phi ,\Psi}_{(\omega )}}.
$$
Then $\nmm T=1$. By Hahn-Banach's theorem, there is a
$G\in L^{\Phi ^*,\Psi ^*}_{(1/\omega)}(\rr {2d}))$ such that
$T$ extends to a form on $M^{\Phi ,\Psi}_{(\omega )}(\rr d)$
and such that
\eqref{Eq:DualElemEsts} and \eqref{Eq:ReformTOp} hold.
Since $\nm g{M^{\Phi ^*,\Psi ^*}_{(1/\omega )}}\lesssim
\nm G{L^{\Phi ^*,\Psi ^*}_{(1/\omega )}}$ in view of Lemma
\ref{Lemma:ContProjOrlSp} we get by choosing $f=f_0$ that
$$
\nm {f_0}{M^{\Phi ,\Psi}_{(\omega )}}=T(f_0)=(f_0,g)_{L^2}
\lesssim \sup |(f_0,g)_{L^2}|=\nmm {f_0},
$$
where the hidden constants are independent of
$f_0\in M^{\Phi ,\Psi}_{(\omega )}(\rr d)$. Here the
supremum is taken over all
$g\in M^{\Phi ^*,\Psi ^*}_{(1/\omega )}(\rr d)$
such that $\nm g{M^{\Phi ^*,\Psi ^*}_{(1/\omega )}}\le 1$.
Consequently we have $\nm {f}{M^{\Phi ,\Psi}_{(\omega )}}
\asymp \nmm {f_0}$, giving that (1), and thereby
the result follow.
%
%
%
%
%
%The remaining assertion that $\nm \cdo {M^{\Phi ,\Psi}_{(\omega )}}$
%and $\nmm \cdo$ are equivalent norms now follows by standard
%arguments in functional analysis. 
%
%
%
%
%
%
%
%the previous
%identities we obtain
%$$
%|T(f)| \lesssim \nm {V_\phi f}{L^{\Phi ,\Psi }_{(\omega )}}
%\nm {V_\phi g}{L^{\Phi ^*,\Psi ^*}_{(\omega )}}
%\asymp
%\nm {f}{M^{\Phi ,\Psi }_{(\omega )}}
%\nm {g}{M^{\Phi ^*,\Psi ^*}_{(\omega )}},
%$$
%Giving that $\nmm T\lesssim
%\nm {g}{M^{\Phi ^*,\Psi ^*}_{(\omega )}}$.
%
%\par
%
%On the other hand, suppose 
%
%remaining relation to 
%
%
% and thereby the result.
%$$
%|T_1(F)|\lesssim \nm 
%$$
%
%with
%
%. Since
%$V_\phi (M^{\Phi ,\Psi}_{(\omega)}(\rr d))$
%
%
\end{proof}

% {\textbf{A suitable condition:}}
% $$
% \underset {(s,t)\to (0,0)}{\operatorname{lim\, inf}}
% \left (
% \frac {\Phi _1(s)+\Phi _2(t)}{(st)^{\fy (p,q)}}
% \right )
% >0,
% \qquad
% \fy (p,q)=
% \begin{cases}
% \frac {pq}{p+q}, & p<\infty ,
% \\[1ex]
% q, & p=\infty .
% \end{cases}
% $$

\par

\end{document}